\newtheorem{thm}{Theorem}[section]
\newtheorem{lem}[thm]{Lemma}
\theoremstyle{definition}
\newtheorem{defn}{Definition}[section]
\numberwithin{equation}{section}
\def\d{\,\mathrm{d}}
\def\dt{\frac{\mathrm{d}}{\mathrm{d}t}}
\def\r{\!\!\!}
\begin{document}


\title[ZHANG AND LI] 
{Global weak solutions to a two-dimensional doubly degenerate nutrient taxis system with logistic source}
\author[Zhang]{Zhiguang Zhang }%
\address{School of Mathematics, Southeast University, Nanjing 211189, P. R. China; and School of Mathematics and Statistics, Chongqing Three Gorges University, Wanzhou 404020 , P. R. China}
\email{guangz$\_$z@163.com}

\author[Li]{Yuxiang Li$^{\star}$}
\address{School of Mathematics, Southeast University, Nanjing 211189, P. R. China}
\email{lieyx@seu.edu.cn}

\thanks{$^{\star}$Corresponding author.}
\thanks{Supported in part by National Natural Science Foundation of China (No. 12271092, No. 11671079), Jiangsu Provincial Scientific Research Center of Applied Mathematics (No. BK20233002) and Science and Technology Research Program of Chongqing Municipal Education Commission (No. KJQN202201226).}

\subjclass[2020]{35B36, 35K65, 35K59, 35A01, 35Q92, 92C17.}%

\keywords{Nutrient taxis system, double degeneracy, two-dimensional domains, weak solutions.}

\begin{abstract}
In this work, we study the doubly degenerate nutrient taxis system with logistic source
\begin{align}
\begin{cases}\tag{$\star$}\label{eq 0.1} 
u_t=\nabla \cdot(u^{l-1} v \nabla u)- \nabla \cdot\left(u^{l} v \nabla v\right)+ u - u^2, \\ v_t=\Delta v-u v 
\end{cases}
\end{align} 
in a smooth bounded domain $\Omega \subset \mathbb{R}^2$, where $l \geqslant 1$.  It is proved that for all reasonably regular initial data, the corresponding homogeneous Neumann initial-boundary value problem \eqref{eq 0.1} possesses a global weak solution which is continuous in its first and essentially smooth in its second component. We point out that when $l = 2$, our result is consistent with that of [G. Li and M. Winkler, Analysis and Applications, (2024)].
\end{abstract}
\maketitle

\section{Introduction and main results}\label{section1}
In the present work, we consider a doubly degenerate nutrient taxis system with logistic source
\begin{equation}\label{SYS:MAIN}
\begin{cases}u_t=\nabla \cdot(u^{l-1} v \nabla u)- \nabla \cdot\left(u^{l}v \nabla v\right)+  u - u^2, & x \in \Omega, t>0, \\ v_t=\Delta v-u v, & x \in \Omega, t>0, \\ \left(u v \nabla u- u^{2} v \nabla v\right) \cdot \nu=\nabla v \cdot \nu=0, & x \in \partial \Omega, t>0, \\ u(x, 0)=u_0(x), \quad v(x, 0)=v_0(x), & x \in \Omega, \end{cases}
\end{equation}
where $\Omega \subset \mathbb{R}^2$ is a bounded domain with smooth boundary, $\nu$ is the outward unit normal vector at a point of $\partial \Omega$ and $l \geqslant 1$. The scalar functions $u$ and $v$ denote the concentration of the nutrient, respectively.

The experimental results documented by Fujikawa \cite{1992-PASMaiA-Fujikawa}, Fujikawa and Matsushita \cite{1989-JotPSoJ-FujikawaMatsushita}, as well as Matsushita and Fujikawa \cite{1990-PASMaiA-MatsushitaFujikawa} provided compelling evidence of the nuanced collective dynamics exhibited by \textit{Bacillus subtilis} populations under conditions of nutrient scarcity; in particular, evolution into complex patterns, and even snowflake-like
population distributions appear as a generic feature rather than an exceptional event in such frameworks. Following experimental evidence indicating limitations in bacterial motility near regions of low nutrient concentration in the relatively soft agar plates, as a mathematical description for such processes, Kawasaki et al. \cite{1997-JoTB-Kawa} proposed the doubly degenerate parabolic system
\begin{align}\label{sys:Kaw}
\begin{cases}
u_t=\nabla \cdot (D_u(u,v) \nabla u)+  u v, \\ 
v_t=D_v \Delta v-  u v,
\end{cases}
\end{align}
where $D_u(u,v)$ is the diffusion coefficient of the bacterial cells, $D_v$ is the constant diffusion coefficient of the nutrients. Evidences presumably suggest that bacteria are immotile when either $u$ or $v$ is low and become active as $u$ or $v$
increases, so they proposed the simplest diffusion coefficient as 
\begin{align*}
D_u(u,v)=uv.
\end{align*} 
This system associated with no-flux boundary condition in a smoothly bounded convex domain $\Omega \subset \mathbb{R}^n$ was studied by Winkler \cite{2022-CVPDE-Winklera} and was proved to be exhibit stabilization of arbitrary structures under some assumptions on initial data.

To describe the formation of the said aggregation pattern with more accuracy, Leyva et al. \cite{2013-PA-LeyvaMalagaPlaza} extended the degenerate diffusion model \eqref{sys:Kaw} to the following doubly degenerate nutrient taxis system 
\begin{align}\label{SYS:LEYVA}
\begin{cases}
u_t=\nabla \cdot(u v \nabla u)- \nabla \cdot\left(u^{2}v \nabla v\right)+  u v, \\ 
v_t=\Delta v- u v,  
\end{cases}
\end{align}
In the two dimensional setting, numerical simulations in \cite{1997-JoTB-Kawa, 2000-AiP-Ben-JacobCohenLevine, 2013-PA-LeyvaMalagaPlaza} indicated that, depending on the initial data and parameter conditions, the model \eqref{SYS:LEYVA} could generate rich branching pattern, which is very close to that observed in biological experiments.
In \cite{2021-TAMS-Winkler}, Winkler studied \eqref{SYS:LEYVA} in the one-dimensional setting, i.e. the following cross-diffusion system 
\begin{align}\label{ONESYS:WINKLER}
\begin{cases}
u_t=\left(u v u_x\right)_x-\left(u^2 v v_x\right)_x+u v, & x \in \Omega, t>0, \\ 
v_t=v_{x x}-u v, & x \in \Omega, t>0, \\ 
u v u_x-u^2 v v_x=0, \quad v_x=0, & x \in \partial \Omega, t>0, \\ 
u(x, 0)=u_0(x), \quad v(x, 0)=v_0(x), & x \in \Omega,
\end{cases}
\end{align}
where the initial data in \eqref{ONESYS:WINKLER} are assumed to be such that 
\begin{align}\label{initial-1}
\begin{cases}
u_0 \in C^{\vartheta}(\overline{\Omega}) \text { for some } \vartheta \in(0,1), \text { with } u_0 \geqslant 0 \text { and } \int_{\Omega} \ln u_0>-\infty, \quad \text { and that } \\ 
v_0 \in W^{1, \infty}(\Omega) \text { satisfies } v_0>0 \text { in } \overline{\Omega}.
\end{cases}
\end{align}\label{0930-1047}
Based on the method of energy estimates, the author demonstrated that the system \eqref{ONESYS:WINKLER} admits a global weak solution, which is uniform-in-time bounded and converges to some equilibrium within a defined topological space. Later, Li and Winkler \cite{2022-CPAA-LiWinklera} removed $\int_{\Omega} \ln u_0>-\infty$ in \eqref{initial-1} and obtained similar results. In the higher dimensional setting, for a variant of  \eqref{SYS:LEYVA}
\begin{align}
\begin{cases}u_t=\nabla \cdot(u v \nabla u)-\nabla \cdot\left(u^\alpha v \nabla v\right)+ u v, \\ v_t=\Delta v-u v,\end{cases}
\end{align}
in a smooth bounded convex domain $\Omega \subset \mathbb{R}^n$, with zero-flux boundary conditions, where $\alpha>0$. It is shown in \cite{2022-JDE-Li}, the system admits a global weak solution when either $\alpha \in \left(1, \frac{3}{2}\right)$ and $n=2$ or $\alpha \in \left(\frac{7}{6}, \frac{13}{9}\right)$ and $n=3$. Recently, another similar variant of \eqref{SYS:LEYVA}
\begin{align*}
\begin{cases}u_t=\nabla \cdot(u v \nabla u)-\nabla \cdot(u^2 v \nabla v)+\rho u-\mu u^\kappa, \\ v_t=\Delta v-u v,\end{cases}
\end{align*} 
was proposed in \cite{2024-NARWA-Pan} to reveal that the global existence of weak solutions was proven in the higher dimensional setting when $\kappa>(n+2)/2$ with $\rho, \mu>0$ and the initial data are reasonably regular and arbitrary large. In the case $\kappa=2$, Li and Winkler \cite{2024-AaA-LiWinkler} proved that the model possesses global continuous weak solutions for all reasonably regular initial data when $n=2$. Very recently, Winkler \cite{2024-JDE-Winkler} considered a more general variant of the system \eqref{SYS:LEYVA}, including the system \eqref{0930-1047} as a special case. Global bounded weak solutions were obtained in a bounded convex planar domain when either $\alpha<2$ and the initial data are reasonably regular as well as arbitrary large, or $\alpha=2$ but the initial data are such that $v_0$ satisfies an appropriate smallness condition.

Let us point out that, in the past decade, the following relatively simpler nutrient taxis system      
\begin{equation}\label{sys-1.5}
\begin{cases}
u_t=\Delta u-\nabla \cdot(u \nabla v), \\ v_t=\Delta v-u v,
\end{cases}
\end{equation}
has been studied extensively by many authors. It is proved that the associated no-flux  initial-boundary value problem for the system \eqref{sys-1.5} admits global classical solutions in bounded planar domains \cite{2012-CPDE-Winkler}, and admits global weak solutions in bounded three-dimensional domains that eventually become smooth in the large time \cite{2012-jde-taoyou}. In the higher dimensional setting, Tao \cite{2011-JMAA-Tao} obtained global bounded classical solutions under a smallness assumption on the initial signal concentration $v_0$. Wang and Li \cite{2019-EJDE-WangLi} showed that this model possesses at least one global renormalized solution. For more related results, we refer readers to the surveys \cite{2015-MMMAS-BellomoBellouquidTaoWinkler,2023-SAM-LankeitWinkler} and the references therein.
Recently, blow-up problems for nutrient taxis system have been investigated by some authors. We refer the reader to \cite{2024-JNS-Jin} for singular attraction-consumption systems, and to \cite{2023-PRSESA-WangWinkler, 2023-CVPDE-AhnWinkler} for repulsion-consumption systems.

Motivated by the aforementioned works, this paper aims to investigate the global existence of weak solutions for a doubly degenerate nutrient taxi system \eqref{SYS:MAIN} with a logistic source in a two-dimensional setting.

\vskip 3mm

\textbf{Main results.} 
Throughout our analysis, the initial data in \eqref{SYS:MAIN} will be assumed
to be such that
\begin{align}\label{assIniVal}
\begin{cases}
u_0 \in \left\{\begin{array}{lll}
W^{1, \infty}(\Omega) \text {  with  } u_0 \geqslant 0   & \text {  when  } & 1 \leqslant l<3 , \\
W^{1, \infty}(\Omega) \text { with } u_0 \geqslant 0 \text {  and  } \int_{\Omega} \ln u_0>-\infty    & \text {   when   } & l=3, \\
W^{1, \infty}(\Omega) \text {   with  } u_0 \geqslant 0 \text {  and   } \int_{\Omega} u_{0}^{3-l}<\infty    & \text {   when   } & l>3  \quad \text {  as well as   }  \\  
\end{array}\right.\\ v_0 \in W^{1, \infty}(\Omega) \text {    satisfies   }  v_0 > 0  \text { in } \overline{\Omega}.
\end{cases}
\end{align}

\begin{defn} \label{def-weak-sol}
Let $l\geqslant1$ and let $\Omega \subset \mathbb{R}^2$ be a bounded domain with smooth boundary. Suppose that $u_0 \in L^1(\Omega)$ and $v_0 \in L^1(\Omega)$ are nonnegative. By a global weak solution of the system (\ref{SYS:MAIN}) we mean a pair $(u, v)$ of functions satisfying
\begin{equation}\label{-2.1}
\begin{cases}
u \in L_{\mathrm{loc}}^1(\overline{\Omega} \times[0, \infty)) \quad \text { and } \\
v \in L_{\mathrm{loc}}^{\infty}(\overline{\Omega} \times[0, \infty)) \cap L_{\mathrm{loc}}^1\left([0, \infty) ; W^{1,1}(\Omega)\right)
\end{cases}
\end{equation}
and
\begin{equation}\label{-2.2}
u^l \in L_{\mathrm{loc}}^1\left([0, \infty) ; W^{1,1}(\Omega)\right) \quad \text { and } \quad u^{l} \nabla v \in L_{\mathrm{loc}}^1\left(\overline{\Omega} \times[0, \infty) ; \mathbb{R}^2\right),
\end{equation}
which are such that
\begin{align}\label{-2.3}
-\int_0^{\infty} \int_{\Omega} u \varphi_t-\int_{\Omega} u_0 \varphi(\cdot, 0)=&-\frac{1}{l} \int_0^{\infty} \int_{\Omega} v  \nabla u^{l} \cdot \nabla \varphi
+ \int_0^{\infty} \int_{\Omega} u^{l} v \nabla v \cdot \nabla \varphi \nonumber\\
& + \int_0^{\infty} \int_{\Omega} u  \varphi - \int_0^{\infty} \int_{\Omega} u^2  \varphi
\end{align}
and
\begin{align}\label{-2.4}
\int_0^{\infty} \int_{\Omega} v \varphi_t+\int_{\Omega} v_0 \varphi(\cdot, 0)=\int_0^{\infty} \int_{\Omega} \nabla v \cdot \nabla \varphi+\int_0^{\infty} \int_{\Omega} u v \varphi
\end{align}
for all $\varphi \in C_0^{\infty}(\overline{\Omega} \times[0, \infty))$.
\end{defn}

\begin{thm} \label{thm-1.1}
Let $l\geqslant1$ and let $\Omega \subset \mathbb{R}^2$ be a bounded domain with smooth boundary. Assume that the initial data $\left(u_0, v_0\right)$ satisfies \eqref{assIniVal}. Then there exist functions
\begin{align}\label{solu:property}
\begin{cases}
u \in C^{0}(\overline{\Omega} \times[0, \infty)) \quad \text { and } \\
v \in C^0(\overline{\Omega} \times[0, \infty)) \cap C^{2,1}(\overline{\Omega} \times(0, \infty))
\end{cases}
\end{align}
such that $u \geqslant 0$ in $\Omega \times(0, \infty)$ and $v>0$ in $\overline{\Omega} \times[0, \infty)$, and that $(u, v)$ solves \eqref{SYS:MAIN} in the sense of Definition \ref{def-weak-sol}.
\end{thm}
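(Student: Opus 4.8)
The plan is to realize the solution as a limit of classical solutions to a family of non-degenerate approximations. First I would regularize \eqref{SYS:MAIN} by non-degenerate, uniformly parabolic approximations (for instance replacing the coefficients $u^{l-1}v$ and $u^{l}v$ by strictly positive mollified versions and, if necessary, adding a small viscosity $\varepsilon\Delta u_\varepsilon$), so that standard parabolic theory yields, for each $\varepsilon\in(0,1)$, a global classical solution $(u_\varepsilon,v_\varepsilon)$ with $u_\varepsilon\ge0$ and $0<v_\varepsilon\le\|v_0\|_{L^\infty(\Omega)}$; the upper bound for $v_\varepsilon$ is immediate from the maximum principle, since the reaction term $-u_\varepsilon v_\varepsilon$ in the second equation is nonpositive. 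Integrating the first equation and applying Jensen's inequality to the logistic term gives a uniform bound on $\int_\Omega u_\varepsilon(\cdot,t)$, and the mass identity $\tfrac{d}{dt}\int_\Omega u_\varepsilon+\int_\Omega u_\varepsilon^2=\int_\Omega u_\varepsilon$ furthermore produces a uniform bound for $\int_0^T\!\int_\Omega u_\varepsilon^2$ on each finite interval; this last quantity is precisely the integrability needed to give meaning to the quadratic term in \eqref{-2.3}. The whole difficulty then lies in upgrading these to estimates strong enough to pass to the limit.

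The central estimate I would obtain by testing the $u_\varepsilon$-equation against $\Phi'(u_\varepsilon)$ with $\Phi''(s)=s^{1-l}$, so that $\Phi$ reduces to a power comparable to $s^{3-l}$ away from the borderline exponents and to $-\ln s$ at $l=3$. This choice is tailored so that $\Phi''$ cancels the weight $u_\varepsilon^{l-1}$ in the diffusion, producing the clean identity
\[
\frac{d}{dt}\int_\Omega\Phi(u_\varepsilon)+\int_\Omega v_\varepsilon|\nabla u_\varepsilon|^2=\int_\Omega u_\varepsilon v_\varepsilon\,\nabla u_\varepsilon\cdot\nabla v_\varepsilon+\int_\Omega\Phi'(u_\varepsilon)\bigl(u_\varepsilon-u_\varepsilon^2\bigr),
\]
up to controllable $\varepsilon$-terms from the regularization. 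It is exactly here that the hypotheses \eqref{assIniVal} enter: in every regime of $l$ they are precisely what guarantees $\int_\Omega\Phi(u_{0\varepsilon})<\infty$. A Young inequality then absorbs the cross term into half of the good dissipation $\int_\Omega v_\varepsilon|\nabla u_\varepsilon|^2$, at the cost of a coupling term of the form $\int_\Omega u_\varepsilon^2 v_\varepsilon|\nabla v_\varepsilon|^2$.

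Controlling this coupling term is the main obstacle. I would combine a quasi-energy estimate for the consumption equation---bounding a functional of the type $\int_\Omega |\nabla v_\varepsilon|^2/v_\varepsilon$ or an $L^4$-gradient quantity for $v_\varepsilon$, while carefully tracking the non-sign-definite boundary contribution that appears because $\Omega$ is not assumed convex---with the $L^2$-bound on $u_\varepsilon$ and two-dimensional Gagliardo--Nirenberg interpolation, so as to close a Gronwall-type inequality for the sum of the two functionals. The delicate point is a circular dependence: higher integrability of $u_\varepsilon$ requires gradient control of $v_\varepsilon$, which in turn requires higher integrability of $u_\varepsilon v_\varepsilon$; resolving it forces the two estimates to be run simultaneously rather than sequentially, and it is this step, together with the Hölder regularity below, that I expect to be genuinely hard.

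Once these bounds are in place I would extract the spatio-temporal integrability required by Definition~\ref{def-weak-sol}: from $\int_0^T\!\int_\Omega v_\varepsilon|\nabla u_\varepsilon|^2$ together with the mass and $L^2$ bounds one deduces local $L^1$-bounds on $\nabla u_\varepsilon^{l}$ and on $u_\varepsilon^{l}\nabla v_\varepsilon$. Parabolic regularity applied to the second equation then upgrades $v_\varepsilon$ to locally uniform $C^{2,1}$-bounds, giving convergence of $v_\varepsilon$ in $C^{2,1}_{\mathrm{loc}}(\overline\Omega\times(0,\infty))$ along a subsequence. For the first component I would establish an $\varepsilon$-uniform local Hölder estimate for $u_\varepsilon$ via degenerate-parabolic regularity theory, which yields $u_\varepsilon\to u$ in $C^0_{\mathrm{loc}}$ and is exactly what makes the limit continuous; combined with an Aubin--Lions/time-equicontinuity argument this supplies the strong convergence needed to identify the nonlinear terms. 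Finally I would pass to the limit $\varepsilon\to0$ in the weak formulation, verifying term by term that each integral in \eqref{-2.3}--\eqref{-2.4} converges, and conclude that the limit pair $(u,v)$ is a global weak solution with the regularity asserted in \eqref{solu:property}.
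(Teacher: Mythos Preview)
Your outline captures the overall architecture correctly---regularize, combine a $\Phi$-functional for $u_\varepsilon$ (with $\Phi''(s)=s^{1-l}$) with a weighted gradient functional for $v_\varepsilon$, close by Gronwall and two-dimensional interpolation, then pass to the limit via H\"older regularity. But there is a genuine missing ingredient: a uniform \emph{positive lower bound} on $v_\varepsilon$, i.e.\ $\|1/v_\varepsilon(\cdot,t)\|_{L^\infty(\Omega)}\le C(T)$ independently of $\varepsilon$. You never mention it, and without it two of your later steps fail. First, every dissipative term you produce carries the weight $v_\varepsilon$ (for instance $\int_\Omega v_\varepsilon|\nabla u_\varepsilon|^2$ or $\int_\Omega|\nabla(u_\varepsilon^{(l+p-1)/2}v_\varepsilon^{1/2})|^2$), and the cross terms generate factors of $v_\varepsilon^{-1}$; without $v_\varepsilon\ge c(T)>0$ you cannot convert these into the unweighted $L^p$-bounds needed for Moser iteration. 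Second, the degenerate-parabolic H\"older theory you invoke (Porzio--Vespri) treats diffusion coefficients comparable to $u_\varepsilon^{l-1}$; here the coefficient is $u_\varepsilon^{l-1}v_\varepsilon$, so if $v_\varepsilon$ is allowed to vanish the structure condition fails and you get no uniform H\"older bound on $u_\varepsilon$.

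The lower bound on $v_\varepsilon$ is not free: since $-\ln v_\varepsilon$ is a subsolution of the heat equation with source $u_\varepsilon$, one needs $u_\varepsilon$ in a space strictly better than $L^2(\Omega\times(0,T))$---the $L^2$ bound you obtain from the logistic term is exactly borderline in two dimensions and does \emph{not} suffice. The paper closes this gap by a separate estimate (testing the $u_\varepsilon$-equation against $\Psi'(u_\varepsilon e^{-v_\varepsilon})$ with $\Psi(\xi)=(\xi+e)\ln^2(\xi+e)+\ldots$) to obtain $\int_0^T\!\int_\Omega u_\varepsilon^2\ln^2(u_\varepsilon+e)\le C(T)$, and then invokes an Orlicz-space parabolic regularity result to deduce the pointwise lower bound on $v_\varepsilon$. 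Only after this do the $L^p$-bootstrap, the $L^\infty$-bound, and the H\"older estimate go through. Your plan needs this intermediate step inserted between the combined energy estimate and the regularity/compactness phase.
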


\vskip 1mm

The remainder of this paper is organized as follows. 
In Section \ref{section2}, we define the global weak solutions of the system \eqref{SYS:MAIN} and provide some initial findings regarding the local-in-time existence of the problem \eqref{SYS:MAIN}. In Section \ref{sect-3}, we give $L^p$ bounds for $u_{\varepsilon}$ and state straightforward consequences.
In Section \ref{section4}, the proof of the Theorem \ref{thm-1.1} is proven.



\section{Preliminary estimates for the regularized problems}\label{section2}

In order to construct a weak solution of the system \eqref{SYS:MAIN}, we consider the following regularized problem for $l \geqslant 1$ and $\varepsilon \in(0,1)$
\begin{align}\label{sys-regul}
\begin{cases}
u_{\varepsilon t}=\nabla \cdot\left(u^{l-1}_{\varepsilon} v_{\varepsilon} \nabla u_{\varepsilon}\right)
  - \nabla \cdot\left(u^{l}_{\varepsilon} v_{\varepsilon} \nabla v_{\varepsilon}\right)+ u_{\varepsilon} - u_{\varepsilon}^2, 
    & x \in \Omega, t>0, \\ 
v_{\varepsilon t}=\Delta v_{\varepsilon}-u_{\varepsilon} v_{\varepsilon}, 
   & x \in \Omega, t>0, \\ 
\frac{\partial u_{\varepsilon}}{\partial \nu}=\frac{\partial v_{\varepsilon}}{\partial \nu}=0, 
   & x \in \partial \Omega, t>0, \\ 
u_{\varepsilon}(x, 0)=u_{0 \varepsilon}(x)=u_0(x)+\varepsilon, \quad 
   v_{\varepsilon}(x, 0)=v_{0 \varepsilon}(x)=v_0(x), 
   & x \in \Omega.
\end{cases}
\end{align}
 
We first give the following standard local existence and extensibility criterion theorem.

\begin{lem}\label{lemma-2.1}
Let $l \geqslant 1$ and $\Omega \subset \mathbb{R}^2$ be a bounded domain with smooth boundary, and suppose that (\ref{assIniVal}) holds. Then for each $\varepsilon \in(0,1)$, there exist $T_{\max, \varepsilon} \in(0, \infty]$ and at least one pair $\left(u_{\varepsilon}, v_{\varepsilon}\right)$ of functions
\begin{align}\label{-2.6}
\begin{cases}
u_{\varepsilon} \in C^0\left(\overline{\Omega} \times\left[0, T_{\max, \varepsilon}\right)\right) \cap C^{2,1}\left(\overline{\Omega} \times\left(0, T_{\max, \varepsilon}\right)\right) \quad \text{ and } \\
v_{\varepsilon} \in \cap_{q > 2} C^0\left(\left[0, T_{\max, \varepsilon}\right); W^{1, q}(\Omega)\right) \cap C^{2,1}\left(\overline{\Omega} \times\left(0, T_{\max, \varepsilon}\right)\right)
\end{cases}
\end{align}
which are such that $u_{\varepsilon}>0$ and $v_{\varepsilon}>0$ in $\overline{\Omega} \times\left[0, T_{\max, \varepsilon}\right)$, that $\left(u_{\varepsilon}, v_{\varepsilon}\right)$ solves \eqref{sys-regul} in the classical sense, and that
\begin{align}\label{-2.7}
if \text{  } T_{\max, \varepsilon}<\infty, \quad \text { then } \quad \limsup _{t \nearrow T_{\max, \varepsilon}}\left\|u_{\varepsilon}(t)\right\|_{L^{\infty}(\Omega)}=\infty.
\end{align}
In addition, this solution satisfies
\begin{align}\label{-2.9}
\left\|v_{\varepsilon}(t)\right\|_{L^{\infty}(\Omega)} \leqslant \|v_0\|_{L^{\infty}(\Omega)}, \quad \text { for all } t \in\left(0, T_{\max, \varepsilon}\right)
\end{align}
and
\begin{align}\label{-2.10}
\int_{0}^{T_{\max, \varepsilon}}\!\!\! \int_{\Omega} u_{\varepsilon} v_{\varepsilon} \leqslant \int_{\Omega} v_0.
\end{align}
\end{lem}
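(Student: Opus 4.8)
The plan is to treat the $\varepsilon$-regularized problem \eqref{sys-regul} as a quasilinear parabolic system that is rendered uniformly (normally) parabolic by the shift $u_{0\varepsilon}=u_0+\varepsilon$. Writing $w=(u_\varepsilon,v_\varepsilon)$, the principal part has the triangular diffusion matrix
\[
A(u_\varepsilon,v_\varepsilon)=\begin{pmatrix} u_\varepsilon^{l-1}v_\varepsilon & -u_\varepsilon^{l}v_\varepsilon\\ 0 & 1\end{pmatrix},
\]
whose eigenvalues $u_\varepsilon^{l-1}v_\varepsilon$ and $1$ are positive whenever $u_\varepsilon,v_\varepsilon>0$; hence the system is normally parabolic in the sense of Amann. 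First I would fix a small $T$ and, on a suitably chosen closed bounded set of functions $\tilde u$ with $\tilde u\geq \varepsilon/2$ in $C^0(\overline{\Omega}\times[0,T])$ (together with a Hölder bound), construct the solution operator: given $\tilde u$, solve the linear equation $v_t=\Delta v-\tilde u\,v$ under Neumann data to obtain $v=v[\tilde u]$, and then solve the equation for $u$ with the uniformly positive top-order coefficient $\tilde u^{l-1}v[\tilde u]$, with divergence-form forcing supplied by the taxis term $-\nabla\cdot(\tilde u^{l}v\nabla v)$ and with the locally Lipschitz reaction $u-u^2$. Standard linear parabolic theory together with the maximum principle shows that this map leaves the set invariant and is a contraction for $T$ small, so the Banach fixed point theorem yields a local solution; alternatively one invokes Amann's existence theorem directly. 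Parabolic regularity then upgrades the solution to the regularity claimed in \eqref{-2.6}.

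Next I would establish positivity and the extensibility criterion. Since $v_0>0$ and the zeroth-order coefficient $u_\varepsilon$ in the $v$-equation is bounded on compact time intervals, the strong maximum principle gives $v_\varepsilon>0$ in $\overline{\Omega}\times[0,T_{\max,\varepsilon})$; comparing $u_\varepsilon$ with a spatially homogeneous subsolution driven only by the reaction shows $u_\varepsilon>0$, so $A$ stays strictly elliptic along the trajectory. The abstract theory supplies a maximal time $T_{\max,\varepsilon}$ together with a blow-up alternative asserting that the solution leaves every compact subset of the underlying state space as $t\nearrow T_{\max,\varepsilon}$. To reduce this to \eqref{-2.7}, I would run a bootstrap: assuming $\|u_\varepsilon(t)\|_{L^{\infty}(\Omega)}$ stays bounded on $[0,T)$ with $T<T_{\max,\varepsilon}$, parabolic $L^p$ and Schauder estimates applied to the $v$-equation (whose coefficient $u_\varepsilon$ is then bounded) yield uniform bounds on $\|v_\varepsilon\|_{W^{1,q}(\Omega)}$ and on Hölder norms of $v_\varepsilon$; feeding these back, the $u$-equation has bounded, uniformly positive coefficients and a controlled right-hand side, so Hölder estimates for divergence-form parabolic equations bound $u_\varepsilon$ in a space precluding escape from compact sets. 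This contradicts the blow-up alternative and forces $\limsup_{t\nearrow T_{\max,\varepsilon}}\|u_\varepsilon(t)\|_{L^{\infty}(\Omega)}=\infty$ whenever $T_{\max,\varepsilon}<\infty$.

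Finally I would derive the two a priori bounds. For \eqref{-2.9}, since $u_\varepsilon,v_\varepsilon\geq 0$ we have $v_{\varepsilon t}\leq\Delta v_\varepsilon$, so the parabolic comparison principle with the constant supersolution $\|v_0\|_{L^{\infty}(\Omega)}$ yields $\|v_\varepsilon(t)\|_{L^{\infty}(\Omega)}\leq\|v_0\|_{L^{\infty}(\Omega)}$. For \eqref{-2.10}, integrating the second equation over $\Omega$ and using the Neumann boundary condition gives $\frac{d}{dt}\int_\Omega v_\varepsilon=-\int_\Omega u_\varepsilon v_\varepsilon$; integrating in time over $(0,t)$ and discarding the nonnegative term $\int_\Omega v_\varepsilon(t)$ produces $\int_0^t\int_\Omega u_\varepsilon v_\varepsilon\leq\int_\Omega v_0$, and letting $t\nearrow T_{\max,\varepsilon}$ yields the claim.

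The main obstacle I anticipate is not the local construction, which is routine once normal parabolicity is recorded, but the clean reduction of the abstract blow-up alternative to control of $\|u_\varepsilon\|_{L^{\infty}(\Omega)}$ alone, that is, the bootstrap showing that an $L^\infty$-bound on $u_\varepsilon$ simultaneously controls $v_\varepsilon$ in $W^{1,q}(\Omega)$ and then closes the loop for $u_\varepsilon$ in a norm strong enough to contradict escape from compacts. Care is needed to keep all constants independent of the approach to $T_{\max,\varepsilon}$ and to exploit throughout the uniform positivity furnished by the $\varepsilon$-shift.
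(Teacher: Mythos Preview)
Your proposal is correct and takes the same approach as the paper, which simply invokes Amann's theory for normally parabolic quasilinear systems and refers to \cite[Lemma~2.1]{2022-NARWA-Winkler} for the details; your outline (triangular diffusion matrix with positive eigenvalues, positivity via maximum/comparison principles, bootstrap reduction of the abstract blow-up alternative to an $L^\infty$ bound on $u_\varepsilon$, and the elementary comparison/integration arguments for \eqref{-2.9} and \eqref{-2.10}) supplies exactly those details. One small point to tighten: a spatially homogeneous function solving only the logistic ODE is not literally a subsolution for the $u_\varepsilon$-equation, since the taxis term $-\nabla\cdot(\underline{u}^{\,l}v_\varepsilon\nabla v_\varepsilon)=-\underline{u}^{\,l}\bigl(v_\varepsilon\Delta v_\varepsilon+|\nabla v_\varepsilon|^2\bigr)$ need not be nonnegative; instead, write the equation in non-divergence form on a compact time interval, observe that every zeroth-order term carries a factor of $u_\varepsilon$ so the coefficient $c(x,t)$ is bounded, and use the barrier $\varepsilon e^{-Mt}$ (or the strong maximum principle directly).
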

\begin{proof}
This lemma can be proved using Amann’s theory \cite{Amann-1989}, and for details we refer to
\cite[Lemma 2.1]{2022-NARWA-Winkler}. 
\end{proof}

Next, we will give the basic properties of solutions to the approximate problem
\eqref{sys-regul}.


\begin{lem}\label{lem-1st-est}
Let $l\geqslant1$ and $T = \min \{\widetilde{T} , T_{\max,\varepsilon}\}$ for $\widetilde{T} \in \left(0, +\infty\right)$, and assume that \eqref{assIniVal} holds. Then there holds
\begin{align}\label{-3.4}
\int_{\Omega} u_{\varepsilon} \leqslant m_*= \int_{\Omega} (u_{0}+1) \quad \text { for all } \varepsilon \in(0,1).
\end{align}
\end{lem}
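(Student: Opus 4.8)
The plan is to derive a closed differential inequality for the total mass functional $y(t) := \int_{\Omega} u_{\varepsilon}(\cdot,t)$ and then to read off the bound from a logistic-type comparison. First I would test the first equation in \eqref{sys-regul} against the constant $1$, i.e.\ integrate it over $\Omega$. Since $u_{\varepsilon} \in C^{2,1}(\overline{\Omega} \times (0, T_{\max,\varepsilon}))$ by Lemma \ref{lemma-2.1}, the time derivative may be pulled outside the integral, and the divergence theorem applies to the two transport terms. The homogeneous Neumann conditions $\partial u_{\varepsilon}/\partial\nu = \partial v_{\varepsilon}/\partial\nu = 0$ on $\partial\Omega$ make the associated flux $u^{l-1}_{\varepsilon} v_{\varepsilon}\nabla u_{\varepsilon} - u^{l}_{\varepsilon} v_{\varepsilon}\nabla v_{\varepsilon}$ have vanishing normal component, so both divergence contributions drop out and only the reaction terms survive:
\begin{align*}
\frac{d}{dt}\int_{\Omega} u_{\varepsilon} = \int_{\Omega} u_{\varepsilon} - \int_{\Omega} u_{\varepsilon}^2, \qquad t \in (0,T).
\end{align*}

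Next I would convert this into an autonomous ODE inequality for $y$. Applying the Cauchy--Schwarz inequality gives $\left(\int_{\Omega} u_{\varepsilon}\right)^2 \leqslant |\Omega| \int_{\Omega} u_{\varepsilon}^2$, whence $\int_{\Omega} u_{\varepsilon}^2 \geqslant \frac{1}{|\Omega|}\, y^2$. Substituting this back yields the logistic differential inequality
\begin{align*}
y'(t) \leqslant y(t) - \frac{1}{|\Omega|}\, y(t)^2, \qquad y(0) = \int_{\Omega} u_{0\varepsilon} = \int_{\Omega} u_0 + \varepsilon |\Omega|.
\end{align*}

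Finally I would invoke the ODE comparison principle against the genuine logistic equation $Y' = Y - \frac{1}{|\Omega|} Y^2$ with carrying capacity $|\Omega|$ and the same initial datum. Its solutions never exceed $\max\{Y(0), |\Omega|\}$: if the datum lies below the equilibrium the solution increases monotonically toward $|\Omega|$ without crossing it (the right-hand side vanishes at $Y = |\Omega|$), while if it lies above, the right-hand side is negative and the solution decreases. Hence $y(t) \leqslant \max\{y(0), |\Omega|\}$ for all $t \in [0,T)$. Since $u_0 \geqslant 0$ forces $|\Omega| \leqslant \int_{\Omega} u_0 + |\Omega| = m_*$, and since $\varepsilon < 1$ forces $y(0) = \int_{\Omega} u_0 + \varepsilon|\Omega| \leqslant \int_{\Omega} u_0 + |\Omega| = m_*$, both quantities in the maximum are bounded by $m_*$, giving $y(t) \leqslant m_*$ as claimed.

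I do not anticipate a serious obstacle here: this is a standard \emph{a priori} mass estimate whose only subtlety is matching the precise constant $m_* = \int_{\Omega}(u_0+1)$. The one point demanding care is that the comparison yields $\max\{y(0), |\Omega|\}$ rather than $y(0)$ alone, so one must separately observe that the carrying capacity $|\Omega|$ is itself $\leqslant m_*$; this is exactly where the hypothesis $u_0 \geqslant 0$ enters, while $\varepsilon < 1$ is what keeps the perturbed initial mass $y(0)$ below $m_*$.
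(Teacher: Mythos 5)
Your proposal is correct and follows essentially the same route as the paper: integrate the first equation of \eqref{sys-regul} over $\Omega$ (the flux terms vanish by the Neumann conditions), apply H\"older/Cauchy--Schwarz to get the logistic inequality $y' \leqslant y - \frac{1}{|\Omega|}y^2$, and conclude by ODE comparison. The paper compresses the final step into ``the claim can be obtained by solving the ODE equation,'' whereas you make the comparison with the carrying capacity $|\Omega|$ and the bound $y(0)\leqslant m_*$ explicit, which is exactly the intended argument.
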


\begin{proof}
By Hölder's inequality, $\left(\int_{\Omega} u_{\varepsilon}\right)^2 \leqslant \left(\int_{\Omega} u_{\varepsilon}^2\right)|\Omega|$. Using the first equation of \eqref{sys-regul} and integrating by parts, we obtain
\begin{align}\label{0919-2254}
\dt \int_{\Omega} u_{\varepsilon} & = \int_{\Omega} u_{\varepsilon}-\int_{\Omega} u_{\varepsilon}^2 \nonumber\\
& \leqslant  \int_{\Omega} u_{\varepsilon}-\frac{1}{|\Omega|}\left(\int_{\Omega} u_{\varepsilon}\right)^2 \quad \text { for all } t \in(0, T) \text { and } \varepsilon \in(0,1).
\end{align}
The claim can be obtained by solving the ODE equation.
\end{proof}

\begin{lem}\label{lem-1st-est1}
Let $l\geqslant1$ and $T = \min \{\widetilde{T} , T_{\max,\varepsilon}\}$ for $\widetilde{T} \in \left(0, +\infty\right)$, and assume that \eqref{assIniVal} holds. Then there exists $C(T)>0$ such that 
\begin{align}\label{-3.5}
\int_0^T \int_{\Omega} u^2_{\varepsilon}\leqslant C(T)   \quad \text { for all } \varepsilon \in(0,1),
\end{align}
where $C(T)$ is a positive constant depending on  $\int_{\Omega} u_0$, but independent of $\varepsilon$.
\end{lem}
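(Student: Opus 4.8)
The plan is to extract the space-time $L^2$ bound directly from the mass balance already exploited in Lemma~\ref{lem-1st-est}. Integrating the first equation of \eqref{sys-regul} over $\Omega$ and using the no-flux boundary condition to annihilate both divergence terms, one obtains the exact identity
\begin{align*}
\dt \int_{\Omega} u_{\varepsilon} = \int_{\Omega} u_{\varepsilon} - \int_{\Omega} u_{\varepsilon}^2,
\end{align*}
rather than merely the one-sided estimate used to prove \eqref{-3.4}. The point is that here the logistic dissipation term $-\int_{\Omega} u_{\varepsilon}^2$ is to be retained rather than thrown away, since it is precisely the quantity we wish to control.

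Next I would integrate this identity in time over $(0,T)$ and rearrange so as to isolate the target quantity, which yields
\begin{align*}
\int_0^T \int_{\Omega} u_{\varepsilon}^2 = \int_0^T \int_{\Omega} u_{\varepsilon} + \int_{\Omega} u_{0\varepsilon} - \int_{\Omega} u_{\varepsilon}(\cdot, T).
\end{align*}
Because $u_{\varepsilon}>0$ throughout $\overline{\Omega}\times[0,T_{\max,\varepsilon})$ by Lemma~\ref{lemma-2.1}, the last term is nonnegative and may simply be discarded, leaving an upper bound in terms of the mass and the initial data alone.

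It then remains only to control the two surviving terms uniformly in $\varepsilon$. For the time integral of the mass, the bound $\int_{\Omega} u_{\varepsilon}\leqslant m_*=\int_{\Omega}(u_0+1)$ from Lemma~\ref{lem-1st-est} gives $\int_0^T\int_{\Omega} u_{\varepsilon}\leqslant m_* T$, while $\int_{\Omega} u_{0\varepsilon}=\int_{\Omega} u_0+\varepsilon|\Omega|\leqslant \int_{\Omega} u_0+|\Omega|$ handles the initial term. Combining these produces the claim with
\begin{align*}
C(T)=m_* T+\int_{\Omega} u_0+|\Omega|,
\end{align*}
a constant depending on $\int_{\Omega} u_0$ and $T$ but not on $\varepsilon$. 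I do not expect any genuine difficulty here: the estimate is a soft consequence of mass balance, and the only feature worth flagging is that $C(T)$ necessarily grows (linearly) in $T$, reflecting that this is a local-in-time bound rather than a uniform-in-time one.
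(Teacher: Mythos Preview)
Your proof is correct and essentially identical to the paper's: both integrate the mass identity $\dt\int_{\Omega}u_{\varepsilon}=\int_{\Omega}u_{\varepsilon}-\int_{\Omega}u_{\varepsilon}^2$ in time, drop the nonnegative term $\int_{\Omega}u_{\varepsilon}(\cdot,T)$, and bound the remainder via Lemma~\ref{lem-1st-est}, arriving at the same constant $(T+1)m_*$ (your $m_*T+\int_{\Omega}u_0+|\Omega|$ equals this since $m_*=\int_{\Omega}u_0+|\Omega|$).
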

\begin{proof}
The estimate
\begin{align*}
\int_0^T \int_{\Omega} u_{\varepsilon}^2& \leqslant  \int_0^T \int_{\Omega} u_{\varepsilon}+ \int_{\Omega} u_{0 \varepsilon}- \int_{\Omega} u_{\varepsilon}(T) \\
& \leqslant  (T + 1) m_* \quad \text { for all } t \in(0, T) \text { and } \varepsilon \in(0,1),
\end{align*}
results from \eqref{0919-2254} after time-integration.
\end{proof}

We use the estimate \eqref{-3.5} to derive the following additional
information in the second solution component.
\begin{lem}\label{lem-1st-est2}
Let $l\geqslant1$ and $T = \min \{\widetilde{T} , T_{\max,\varepsilon}\}$ for $\widetilde{T} \in \left(0, +\infty\right)$, and assume that \eqref{assIniVal} holds. Then there exists $C(T)>0$ such that 
\begin{align}\label{-3.7}
\int_0^T \int_{\Omega}\left| v_{\varepsilon t }\right|^2  \leqslant C(T)   \quad \text { for all } \varepsilon \in(0,1),
\end{align}
where $C(T)$ is a positive constant depending on $\int_{\Omega} u_0$ and $\int_{\Omega} |\nabla v_{0 }|^2$, but independent of $\varepsilon$.
\end{lem}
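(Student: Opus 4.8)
The plan is to obtain the desired space-time bound through a parabolic energy estimate in which the second equation of \eqref{sys-regul} is tested against its own time derivative. Since \eqref{-2.6} guarantees $v_\varepsilon \in C^{2,1}\left(\overline{\Omega} \times\left(0, T_{\max, \varepsilon}\right)\right)$, all manipulations below are classical. Multiplying the identity $v_{\varepsilon t}=\Delta v_\varepsilon - u_\varepsilon v_\varepsilon$ by $v_{\varepsilon t}$ and integrating over $\Omega$ yields
\[
\int_\Omega v_{\varepsilon t}^2 = \int_\Omega v_{\varepsilon t}\,\Delta v_\varepsilon - \int_\Omega u_\varepsilon v_\varepsilon v_{\varepsilon t}.
\]
First I would integrate the diffusion term by parts, using the homogeneous Neumann condition $\partial v_\varepsilon/\partial\nu = 0$ from \eqref{sys-regul} to discard the boundary contribution; since $\nabla v_{\varepsilon t} = \partial_t \nabla v_\varepsilon$, this gives
\[
\int_\Omega v_{\varepsilon t}\,\Delta v_\varepsilon = -\int_\Omega \nabla v_\varepsilon \cdot \nabla v_{\varepsilon t} = -\frac12 \dt \int_\Omega |\nabla v_\varepsilon|^2.
\]

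Next I would absorb the reaction contribution. By Young's inequality,
\[
\left| \int_\Omega u_\varepsilon v_\varepsilon v_{\varepsilon t} \right| \leqslant \frac12 \int_\Omega v_{\varepsilon t}^2 + \frac12 \int_\Omega u_\varepsilon^2 v_\varepsilon^2,
\]
and then the pointwise bound $\|v_\varepsilon(t)\|_{L^\infty(\Omega)} \leqslant \|v_0\|_{L^\infty(\Omega)}$ from \eqref{-2.9} lets me replace $v_\varepsilon^2$ by the constant $\|v_0\|_{L^\infty(\Omega)}^2$ inside the last integral. Combining the three displays and moving the $\tfrac12\int_\Omega v_{\varepsilon t}^2$ term to the left produces the differential inequality
\[
\frac12 \int_\Omega v_{\varepsilon t}^2 + \frac12 \dt \int_\Omega |\nabla v_\varepsilon|^2 \leqslant \frac12 \|v_0\|_{L^\infty(\Omega)}^2 \int_\Omega u_\varepsilon^2 .
\]

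Finally I would integrate this in time over $(0,T)$. The gradient term telescopes, and after discarding the nonnegative contribution $\tfrac12\int_\Omega |\nabla v_\varepsilon(T)|^2$ one is left with
\[
\int_0^T \int_\Omega v_{\varepsilon t}^2 \leqslant \int_\Omega |\nabla v_0|^2 + \|v_0\|_{L^\infty(\Omega)}^2 \int_0^T \int_\Omega u_\varepsilon^2,
\]
at which point the space-time $L^2$ estimate for $u_\varepsilon$ furnished by Lemma \ref{lem-1st-est1} closes the argument with a constant $C(T)$ depending only on $\int_\Omega u_0$, $\int_\Omega |\nabla v_0|^2$ and $\|v_0\|_{L^\infty(\Omega)}$. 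I do not anticipate a genuine obstacle here, as this is a standard energy computation; the only point requiring attention is that every intermediate bound must be $\varepsilon$-independent, which is precisely what \eqref{-2.9} and Lemma \ref{lem-1st-est1} supply. One should also note that the regularity in \eqref{-2.6} makes the testing against $v_{\varepsilon t}$ rigorous on each compact time interval, so the estimate is established without circularity.
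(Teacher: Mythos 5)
Your proof is correct, and it rests on exactly the same ingredients as the paper's: the space--time bound $\int_0^T\!\int_\Omega u_\varepsilon^2\leqslant C(T)$ from Lemma \ref{lem-1st-est1}, the maximum-principle bound \eqref{-2.9}, and the parabolic energy identity $\int_\Omega v_{\varepsilon t}\,\Delta v_\varepsilon=-\tfrac12\dt\int_\Omega|\nabla v_\varepsilon|^2$. The only genuine difference is the choice of multiplier: you test the second equation with $v_{\varepsilon t}$ and absorb $\tfrac12\int_\Omega v_{\varepsilon t}^2$ into the left-hand side, obtaining the claim in a single pass, whereas the paper tests with $-\Delta v_\varepsilon$, first derives
$\int_\Omega|\nabla v_\varepsilon(T)|^2+\int_0^T\!\int_\Omega|\Delta v_\varepsilon|^2\leqslant\int_\Omega|\nabla v_0|^2+\|v_0\|_{L^\infty(\Omega)}^2\int_0^T\!\int_\Omega u_\varepsilon^2$,
and then recovers $v_{\varepsilon t}$ from the equation via the pointwise bound $v_{\varepsilon t}^2\leqslant 2|\Delta v_\varepsilon|^2+2\|v_0\|_{L^\infty(\Omega)}^2u_\varepsilon^2$, invoking the $u_\varepsilon$ estimate a second time. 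The two computations are dual to one another and give the same dependence of $C(T)$ on $\int_\Omega u_0$, $\int_\Omega|\nabla v_0|^2$ and $\|v_0\|_{L^\infty(\Omega)}$; the paper's detour has the side benefit of an $\varepsilon$-uniform $L^2$ space--time bound on $\Delta v_\varepsilon$ (its \eqref{2-2.54}, not used elsewhere), while yours is shorter by one step. The mild technical point you flag --- justifying $\nabla v_{\varepsilon t}=\partial_t\nabla v_\varepsilon$ and the integration by parts near $t=0$ --- is shared by both arguments and is covered by the parabolic smoothing the paper invokes in the proof of Lemma \ref{lem-2nd-est}, so there is no gap.
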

\begin{proof}
Multiplying the second equation of \eqref{sys-regul} by $-\Delta v_{\varepsilon}$ and integrating over $\Omega$ yield that
\begin{align*}
\frac{1}{2} \frac{d}{d t} \int_{\Omega}|\nabla v_{\varepsilon}|^2+\int_{\Omega}|\Delta v_{\varepsilon}|^2=\int_{\Omega}u_{\varepsilon} v_{\varepsilon}  \Delta v_{\varepsilon} \quad \text { for all } t \in(0, T) \text { and } \varepsilon \in(0,1).
\end{align*}
This together with \eqref{-2.9} and Young's inequality gives
\begin{align*}
\frac{1}{2} \frac{d}{d t} \int_{\Omega}|\nabla v_{\varepsilon}|^2+\int_{\Omega}|\Delta v_{\varepsilon}|^2 \leqslant \frac{1}{2} \int_{\Omega}|\Delta v_{\varepsilon}|^2+\frac{1}{2}\left\|v_0\right\|_{L^{\infty}(\Omega)}^2 \int_{\Omega} u_{\varepsilon}^2
\end{align*}
for all $t \in(0, T)$ and $\varepsilon \in(0,1)$. Integrating the above inequality on $(0, T)$, we obtian 
\begin{align}\label{2-2.54}
\int_{\Omega}\left|\nabla v_{\varepsilon}(T)\right|^2+\int_0^T \int_{\Omega}\left|\Delta v_{\varepsilon}\right|^2 \leqslant \int_{\Omega}\left|\nabla v_0\right|^2+\left\|v_0\right\|_{L^{\infty}(\Omega)}^2 \int_0^T \int_{\Omega} u_{\varepsilon}^2
\end{align}
for all $t \in(0, T)$ and $\varepsilon \in(0,1)$. Since $(a+b)^2 \leqslant 2(a^2+b^2)$ for all $a, b > 0$, we can estimate
\begin{align*}
v_{\varepsilon t}^2 \leqslant \left(\left|\Delta v_{\varepsilon}\right|+u_{\varepsilon} v_{\varepsilon}\right)^2 \leqslant 2 \left|\Delta v_{\varepsilon}\right|^2+2\left\|v_0\right\|_{L^{\infty}(\Omega)}^2 u_{\varepsilon}^2\quad \text { for all } t \in(0, T) \text { and } \varepsilon \in(0,1). 
\end{align*}
Combining this with \eqref{-3.5} and \eqref{2-2.54}, we complete the proof of \eqref{-3.7}.
\end{proof}


\begin{lem}\label{lem-2nd-est}
Let $l\geqslant1$ and $T = \min \{\widetilde{T} , T_{\max,\varepsilon}\}$ for $\widetilde{T} \in \left(0, +\infty\right)$, and assume that \eqref{assIniVal} holds. Then there exists $C(T)>0$ such that
\begin{align}\label{-3.10}
\int_0^{T}\r\int_{\Omega} \frac{|\nabla v_{\varepsilon }|^4}{v_{\varepsilon}^3} \leqslant C(T) \quad \text { for all } \varepsilon \in(0,1),
\end{align}
where $C(T)$ is a positive constant depending on $\int_{\Omega} u_0$, $\int_{\Omega} |\nabla v_{0 }|^2$ and $\int_{\Omega} \frac{|\nabla v_{0 }|^2}{v_0}$, but independent of $\varepsilon$. 
\end{lem}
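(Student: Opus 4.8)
The plan is to derive a differential inequality for the Fisher–information–type functional $\int_\Omega \frac{|\nabla v_\varepsilon|^2}{v_\varepsilon}$, whose dissipation rate will be shown to dominate the target quantity. First I would differentiate this functional in time and employ the second equation of \eqref{sys-regul} together with integration by parts and the Neumann condition $\frac{\partial v_\varepsilon}{\partial\nu}=0$; a direct computation yields
\begin{align*}
\dt \int_\Omega \frac{|\nabla v_\varepsilon|^2}{v_\varepsilon}
= -2\int_\Omega \frac{|\Delta v_\varepsilon|^2}{v_\varepsilon}
+ 2\int_\Omega u_\varepsilon \Delta v_\varepsilon
+ \int_\Omega \frac{|\nabla v_\varepsilon|^2 \Delta v_\varepsilon}{v_\varepsilon^2}
- \int_\Omega u_\varepsilon \frac{|\nabla v_\varepsilon|^2}{v_\varepsilon}.
\end{align*}
The purely diffusive part is then recast, by means of the pointwise identity $v\,|D^2\log v|^2 = \frac{|D^2 v|^2}{v} - 2\frac{D^2 v(\nabla v,\nabla v)}{v^2} + \frac{|\nabla v|^4}{v^3}$ and a weighted Bochner-type integration by parts (whose boundary contributions collapse to $\tfrac{1}{v_\varepsilon}\frac{\partial|\nabla v_\varepsilon|^2}{\partial\nu}$), into the form
\begin{align*}
-2\int_\Omega \frac{|\Delta v_\varepsilon|^2}{v_\varepsilon}
+ \int_\Omega \frac{|\nabla v_\varepsilon|^2 \Delta v_\varepsilon}{v_\varepsilon^2}
= -2\int_\Omega v_\varepsilon\,|D^2\log v_\varepsilon|^2
+ \int_{\partial\Omega} \frac{1}{v_\varepsilon}\frac{\partial |\nabla v_\varepsilon|^2}{\partial\nu}.
\end{align*}
Since $u_\varepsilon,v_\varepsilon>0$, the term $\int_\Omega u_\varepsilon \frac{|\nabla v_\varepsilon|^2}{v_\varepsilon}$ is nonnegative and may be discarded, leaving a differential inequality with dissipation rate $2\int_\Omega v_\varepsilon|D^2\log v_\varepsilon|^2$, a forcing term $2\int_\Omega u_\varepsilon\Delta v_\varepsilon$, and the boundary integral.

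The decisive observation is that, \emph{under the Neumann condition}, this dissipation rate already controls \eqref{-3.10}. Writing $w_\varepsilon=\log v_\varepsilon$ and integrating by parts in $\int_\Omega v_\varepsilon|\nabla w_\varepsilon|^2\Delta w_\varepsilon$, the boundary term carries the factor $\partial_\nu w_\varepsilon=\frac{1}{v_\varepsilon}\partial_\nu v_\varepsilon=0$ and hence vanishes; combining the resulting identity with Young's inequality and the two-dimensional elementary bound $(\Delta w)^2\leqslant 2|D^2 w|^2$ produces
\begin{align*}
\int_\Omega \frac{|\nabla v_\varepsilon|^4}{v_\varepsilon^3}
\leqslant C \int_\Omega v_\varepsilon\,|D^2\log v_\varepsilon|^2 ,
\end{align*}
so that a space–time bound for the dissipation immediately gives \eqref{-3.10}. (Note that this step genuinely needs the boundary condition: interior contributions alone would not suffice, since $v\,|D^2\log v|^2$ vanishes wherever $\log v$ is affine.)

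It then remains to integrate the inequality over $(0,T)$ and bound its right-hand side uniformly in $\varepsilon$. For the forcing term I would use $2\int_\Omega u_\varepsilon\Delta v_\varepsilon\leqslant \int_\Omega u_\varepsilon^2+\int_\Omega|\Delta v_\varepsilon|^2$, whose time integral is controlled by \eqref{-3.5} and by the bound $\int_0^T\int_\Omega|\Delta v_\varepsilon|^2\leqslant C(T)$ contained in \eqref{2-2.54}; the initial value of the functional is finite because $v_0\in W^{1,\infty}(\Omega)$ is bounded away from zero on $\overline\Omega$, which accounts for the stated dependence of $C(T)$ on $\int_\Omega\frac{|\nabla v_0|^2}{v_0}$.

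I expect the boundary integral to be the main obstacle, precisely because $\Omega$ is not assumed convex. The smoothness of $\partial\Omega$ gives the pointwise estimate $\frac{\partial|\nabla v_\varepsilon|^2}{\partial\nu}\leqslant C_\Omega|\nabla v_\varepsilon|^2$ on $\partial\Omega$, reducing matters to controlling $\int_0^T\int_{\partial\Omega}\frac{|\nabla v_\varepsilon|^2}{v_\varepsilon}$. Unlike the convex case, this singularly weighted trace term has no favourable sign, so I would instead invoke a trace inequality combined with interpolation in order to absorb a small multiple of $\int_\Omega\frac{|\nabla v_\varepsilon|^4}{v_\varepsilon^3}$ — equivalently of $\int_\Omega v_\varepsilon|D^2\log v_\varepsilon|^2$ — into the left-hand side, the remaining lower-order contribution being handled via the uniform bound on $\int_\Omega|\nabla v_\varepsilon|^2$ from \eqref{2-2.54} and a Grönwall argument applied to $\int_\Omega\frac{|\nabla v_\varepsilon|^2}{v_\varepsilon}$. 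Performing this absorption with the correct weights, so that no $\varepsilon$-dependence intrudes through the (generally degenerating) lower bound of $v_\varepsilon$, is the technically delicate heart of the argument.
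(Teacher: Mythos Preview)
Your proposal is correct and follows essentially the same route as the paper's proof: differentiate the Fisher information $\int_\Omega|\nabla v_\varepsilon|^2/v_\varepsilon$, convert the diffusive contribution into the dissipation $\int_\Omega v_\varepsilon|D^2\ln v_\varepsilon|^2$ (which dominates $\int_\Omega|\nabla v_\varepsilon|^4/v_\varepsilon^3$), and handle the non-convex boundary integral via the Mizoguchi--Souplet pointwise bound together with a trace embedding and absorption into the dissipation. The only differences are tactical: the paper absorbs $\int_\Omega u_\varepsilon\Delta v_\varepsilon$ directly into the dissipation through $\int_\Omega|D^2 v_\varepsilon|^2/v_\varepsilon$ rather than quoting \eqref{2-2.54}, and it reduces the lower-order boundary remainder to $c\int_\Omega v_\varepsilon$ by one additional Young inequality instead of invoking Gr\"onwall.
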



\begin{proof}
Since $v_{\varepsilon}$ is positive in $\overline{\Omega} \times(0, T)$ and actually belongs to $C^3(\overline{\Omega} \times(0, T))$ thanks to standard parabolic regularity theory, using the second equation in the system \eqref{sys-regul} we compute
\begin{align}\label{-3.11}
\frac{1}{2} \dt \int_{\Omega} \frac{\left|\nabla v_{\varepsilon}\right|^2}{v_{\varepsilon}}
= & \int_{\Omega} \frac{1}{v_{\varepsilon}} \nabla v_{\varepsilon} \cdot \nabla\left\{\Delta v_{\varepsilon}
    -u_{\varepsilon} v_{\varepsilon}\right\}
    -\frac{1}{2} \int_{\Omega} \frac{1}{v_{\varepsilon}^2}\left|\nabla v_{\varepsilon}\right|^2 \cdot\left\{\Delta v_{\varepsilon}-u_{\varepsilon} v_{\varepsilon}\right\} \nonumber\\
= & \int_{\Omega} \frac{1}{v_{\varepsilon}} \nabla v_{\varepsilon} \cdot \nabla\Delta v_{\varepsilon}
    -\int_{\Omega} \frac{u_{\varepsilon}}{v_{\varepsilon}}\left|\nabla v_{\varepsilon}\right|^2
    -\int_{\Omega} \nabla u_{\varepsilon} \cdot \nabla v_{\varepsilon}\nonumber\\
  & -\frac{1}{2} \int_{\Omega} \frac{1}{v_{\varepsilon}^2}\left|\nabla v_{\varepsilon}\right|^2 \cdot\Delta v_{\varepsilon}
    +\frac{1}{2} \int_{\Omega} \frac{u_{\varepsilon}}{v_{\varepsilon}}\left|\nabla v_{\varepsilon}\right|^2\nonumber\\
= & \int_{\Omega} \frac{1}{v_{\varepsilon}} \nabla v_{\varepsilon} \cdot \nabla\Delta v_{\varepsilon}
    -\frac{1}{2} \int_{\Omega} \frac{1}{v_{\varepsilon}^2}\left|\nabla v_{\varepsilon}\right|^2 \cdot\Delta v_{\varepsilon}\nonumber\\
  & -\frac{1}{2} \int_{\Omega} \frac{u_{\varepsilon}}{v_{\varepsilon}}\left|\nabla v_{\varepsilon}\right|^2
    -\int_{\Omega} \nabla u_{\varepsilon} \cdot \nabla v_{\varepsilon}\quad \text { for all } t \in(0, T) \text { and } \varepsilon \in(0,1). 
\end{align}
Based on \cite[Lemma~3.2]{2012-CPDE-Winkler}, we have the integral identity
\begin{align*}
\int_{\Omega} \frac{1}{v_{\varepsilon}} \nabla v_{\varepsilon} \cdot \nabla \Delta v_{\varepsilon}
-\frac{1}{2} \int_{\Omega} \frac{1}{v_{\varepsilon}^2}|\nabla v_{\varepsilon}|^2 \Delta v_{\varepsilon}
= -\int_{\Omega} v_{\varepsilon}\left|D^2 \ln v_{\varepsilon}\right|^2
  +\frac{1}{2} \int_{\partial \Omega} \frac{1}{v_{\varepsilon}} \frac{\partial|\nabla v_{\varepsilon}|^2}{\partial \nu}
\end{align*}
for all $t \in(0, T)$ and $\varepsilon \in(0,1)$. Therefore, \eqref{-3.11} becomes
\begin{align}\label{-3.11-2}
&\frac{1}{2} \dt \int_{\Omega} \frac{\left|\nabla v_{\varepsilon}\right|^2}{v_{\varepsilon}}
  +\int_{\Omega} v_{\varepsilon}\left|D^2 \ln v_{\varepsilon}\right|^2
    +\frac{1}{2} \int_{\Omega} \frac{u_{\varepsilon}}{v_{\varepsilon}}\left|\nabla v_{\varepsilon}\right|^2\nonumber\\
   =&  \frac{1}{2} \int_{\partial \Omega} \frac{1}{v_{\varepsilon}} \frac{\partial|\nabla v_{\varepsilon}|^2}{\partial \nu}
    -\int_{\Omega} \nabla u_{\varepsilon} \cdot \nabla v_{\varepsilon}\quad \text { for all } t \in(0, T) \text { and } \varepsilon \in(0,1). 
\end{align}
By \cite[Lemma 3.3]{2012-CPDE-Winkler} and \cite[Lemma 3.4]{2022-NARWA-Winkler}, there holds
\begin{equation}\label{-3.12}
\int_{\Omega} v_{\varepsilon}\left|D^2 \ln v_{\varepsilon}\right|^2 
\geqslant c_1 \int_{\Omega} \frac{\left|D^2 v_{\varepsilon}\right|^2}{v_{\varepsilon}}
  +c_1 \int_{\Omega} \frac{\left|\nabla v_{\varepsilon}\right|^4}{v_{\varepsilon}^3}
\end{equation}
for some $c_1>0$.
Combining a pointwise upper estimate for normal derivatives in \cite[Lemma 4.2]{2014-aihcn-mizoguchi} with a boundary trace embedding inequality (cf. \cite[Theorem 1, p.272]{2010--Evans}) and Young's inequality, we have 
\begin{align}\label{n-3.13}
\frac{1}{2} \int_{\partial \Omega} \frac{1}{v_{\varepsilon}} \cdot \frac{\partial\left|\nabla v_{\varepsilon}\right|^2}{\partial \nu} 
& \leqslant c_2 \int_{\partial \Omega} \frac{\left|\nabla v_{\varepsilon}\right|^2}{v_{\varepsilon}} \nonumber\\
& \leqslant c_3 \int_{\Omega}\left|\nabla\Big(\frac{\left|\nabla v_{\varepsilon}\right|^2}{v_{\varepsilon}}\Big)\right|
    +c_3 \int_{\Omega} \frac{\left|\nabla v_{\varepsilon}\right|^2}{v_{\varepsilon}} \nonumber\\
& \leqslant 2 c_3 \int_{\Omega} \frac{\left|D^2 v_{\varepsilon} \cdot \nabla v_{\varepsilon}\right|}{v_{\varepsilon}}
    +c_3 \int_{\Omega} \frac{\left|\nabla v_{\varepsilon}\right|^3}{v_{\varepsilon}^2}
    +c_3 \int_{\Omega} \frac{\left|\nabla v_{\varepsilon}\right|^2}{v_{\varepsilon}} \nonumber\\
&\leqslant \frac{c_1}{2}\int_{\Omega} \frac{\left|D^2 v_{\varepsilon}\right|^2}{v_{\varepsilon}}
  +\frac{c_1}{4} \int_{\Omega} \frac{\left|\nabla v_{\varepsilon}\right|^4}{v_{\varepsilon}^3}
  +(3 c_1^{-1} c_3^2+c_3) \int_{\Omega} \frac{\left|\nabla v_{\varepsilon}\right|^2}{v_{\varepsilon}}\nonumber\\
&\leqslant \frac{c_1}{2} \int_{\Omega} \frac{\left|D^2 v_{\varepsilon}\right|^2}{v_{\varepsilon}}
  +\frac{c_1}{2} \int_{\Omega} \frac{\left|\nabla v_{\varepsilon}\right|^4}{v_{\varepsilon}^3}
  +c_5 \int_{\Omega} v_{\varepsilon} 
\end{align}
with some constant $c_5>0$. 
Since $\left|\Delta v_{\varepsilon}\right|^2 \leqslant 2\left|D^2 v_{\varepsilon}\right|^2$, by Young's inequality, we infer that 
\begin{align}\label{-3.13}
- \int_{\Omega} \nabla u_{\varepsilon} \cdot \nabla v_{\varepsilon}  = \int_{\Omega} u_{\varepsilon} \Delta v_{\varepsilon}  & \leqslant \frac{c_1}{4} \int_{\Omega} \frac{\left|\Delta v_{\varepsilon}\right|^2}{v_{\varepsilon}}+ c_1^{-1} \int_{\Omega} u_{\varepsilon}^2 v_{\varepsilon}\nonumber\\
& \leqslant \frac{c_1}{2} \int_{\Omega} \frac{\left|D^2 v_{\varepsilon}\right|^2}{v_{\varepsilon}}+ c_1^{-1} \int_{\Omega} u_{\varepsilon}^2 v_{\varepsilon}.
\end{align}
Summing up \eqref{-3.11-2}-\eqref{-3.13} and according to \eqref{-2.9}, we conclude that with $c_6=2 c_5 |\Omega| \left\|v_0\right\|_{L^{\infty}(\Omega)}$,
\begin{align}\label{-3.14}
\dt \int_{\Omega} \frac{|\nabla v_{\varepsilon}|^2}{v_{\varepsilon}}
+c_1 \int_{\Omega} \frac{|\nabla v_{\varepsilon}|^4}{v_{\varepsilon}^3}
+\frac{1}{2} \int_{\Omega} \frac{u_{\varepsilon}}{v_{\varepsilon}} |\nabla v_{\varepsilon }|^2 
\leqslant &   2 c_1^{-1} \int_{\Omega} u_{\varepsilon}^2 v_{\varepsilon} + 2 c_5 \int_{\Omega} v_{\varepsilon} \nonumber\\
\leqslant &  2 c_1^{-1} \left\|v_0\right\|_{L^{\infty}(\Omega)} \int_{\Omega} u_{\varepsilon}^2  + c_6
\end{align}
for all $t \in(0, T)$ and $\varepsilon \in(0,1)$. Integrating \eqref{-3.14} on $(0,T)$ shows that
\begin{align*}
&\int_{\Omega} \frac{|\nabla v_{\varepsilon}(t)|^2}{v_{\varepsilon}(T)}
  +c_1 \int_0^{T} \int_{\Omega} \frac{|\nabla v_{\varepsilon}|^4}{v_{\varepsilon}^3}
  +\frac{1}{2}\int_0^{T} \int_{\Omega} \frac{u_{\varepsilon}}{v_{\varepsilon}} |\nabla v_{\varepsilon }|^2 \\
\leqslant &\int_{\Omega} \frac{|\nabla v_{0 }|^2}{v_0}
  +2 c_1^{-1} \left\|v_0\right\|_{L^{\infty}(\Omega)} \int_0^{T} \int_{\Omega} u_{\varepsilon}^2  + c_6 T \quad \text { for all } \varepsilon \in(0,1),
\end{align*}
thus \eqref{-3.10} holds thanks to Lemma \ref{lem-1st-est1}.
\end{proof}


\section{$L^p$ Bounds for $u_{\varepsilon}$}\label{sect-3}

In this section, we derive the estimate for $\|u_\varepsilon(t)\|_{L^p(\Omega)}$ independent of $\varepsilon$ for any $p > 1$ in $(0,T)$. 
We first deduce a differential inequality for $\int_\Omega u^p_{\varepsilon}(t)$.

\begin{lem}\label{lemma-3.9xx}
Let $l\geqslant1$ and $T = \min \{\widetilde{T} , T_{\max,\varepsilon}\}$ for $\widetilde{T} \in \left(0, +\infty\right)$, and assume that \eqref{assIniVal} holds. 
Then for all $p \geqslant 2$ we have 
\begin{align}\label{0704-0024}
\dt \int_{\Omega} u_{\varepsilon}^p 
+\frac{p(p-1)}{(l+p-1)^2} \int_{\Omega}\left|\nabla\left(u_{\varepsilon}^{\frac{l+p-1}{2}} v_{\varepsilon}^{\frac{1}{2}}\right)\right|^2 
\leqslant &   A \left\{\int_\Omega u_{\varepsilon}^{2(l+p-1)}v_{\varepsilon}^2\right\}^{\frac{1}{2}} \cdot\left\{\int_\Omega \frac{|\nabla v_{\varepsilon}|^4}{v_{\varepsilon}^3} \frac{1}{v_{\varepsilon}}\right\}^{\frac{1}{2}} \nonumber\\
& +  A \left\{\int_\Omega u_{\varepsilon}^{2(l+p-1)}v_{\varepsilon}^2\right\}^{\frac{1}{2}} \cdot\left\{\int_\Omega \frac{|\nabla v_{\varepsilon}|^4}{v_{\varepsilon}^3} \right\}^{\frac{1}{2}}\nonumber\\
& + p  \int_{\Omega} u_{\varepsilon}^{p}   - p  \int_{\Omega}  u_{\varepsilon}^{p+1}
\end{align}
for all $t \in(0, T)$ and $\varepsilon \in(0,1)$, where $A=\max \left\{\frac{1}{2}\frac{(p-1)p}{(l+p-1)^2}, \frac{(p-1)p}{2}\left\|v_{0} \right\|^\frac{3}{2}_{L^{\infty}(\Omega)} \right\}$.
\end{lem}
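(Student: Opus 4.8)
The plan is to derive \eqref{0704-0024} by testing the first equation of the regularized system \eqref{sys-regul} against $p u_\varepsilon^{p-1}$ and then rebalancing the resulting dissipation against the gradient term that is to be carried on the left-hand side, where I abbreviate $w:=u_\varepsilon^{(l+p-1)/2}v_\varepsilon^{1/2}$ and $m:=(l+p-1)/2$. Multiplying the first PDE by $p u_\varepsilon^{p-1}$ and integrating over $\Omega$, the no-flux conditions $\partial_\nu u_\varepsilon=\partial_\nu v_\varepsilon=0$ annihilate both boundary contributions, so two integrations by parts give
\[
\dt \int_\Omega u_\varepsilon^p = -p(p-1)\int_\Omega u_\varepsilon^{l+p-3} v_\varepsilon |\nabla u_\varepsilon|^2 + p(p-1)\int_\Omega u_\varepsilon^{l+p-2} v_\varepsilon \nabla u_\varepsilon\cdot\nabla v_\varepsilon + p\int_\Omega u_\varepsilon^p - p\int_\Omega u_\varepsilon^{p+1},
\]
for all $t\in(0,T)$ and $\varepsilon\in(0,1)$; I abbreviate the first two integrals by $I_1$ and $I_2$. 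In parallel I expand the target dissipation, which yields $\int_\Omega|\nabla w|^2=m^2 I_1+m I_3+\tfrac14 I_4$ with $I_3:=\int_\Omega u_\varepsilon^{l+p-2}\nabla u_\varepsilon\cdot\nabla v_\varepsilon$ and $I_4:=\int_\Omega u_\varepsilon^{l+p-1}v_\varepsilon^{-1}|\nabla v_\varepsilon|^2$. Multiplying this identity by the prescribed weight $\frac{p(p-1)}{(l+p-1)^2}=\frac{p(p-1)}{4m^2}$ and adding it to the previous one, the genuinely dissipative $I_1$-term is only partly consumed and a residual $-\tfrac34 p(p-1)I_1$ remains available to absorb the cross terms.

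The heart of the argument is to dispose of the three remaining unsigned contributions $p(p-1)I_2$, $\frac{p(p-1)}{4m}I_3$, and $\frac{p(p-1)}{16m^2}I_4$ by means of this residual dissipation together with two applications of Young's inequality with carefully tuned weights. For $I_2$ I would split $\nabla u_\varepsilon\cdot\nabla v_\varepsilon$ by Young's inequality with parameter $\tfrac12$, producing $\tfrac12 p(p-1)I_1$ (to be absorbed) together with $\tfrac12 p(p-1)\int_\Omega u_\varepsilon^{l+p-1}v_\varepsilon|\nabla v_\varepsilon|^2$. For $I_3$ I would apply Young's inequality with parameter $m$, producing $\tfrac14 p(p-1)I_1$ together with $\frac{p(p-1)}{16m^2}I_4$. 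The two $I_1$-contributions sum to exactly $\tfrac34 p(p-1)I_1$, i.e. precisely the residual dissipation, so every $I_1$-term cancels identically. This exact bookkeeping is what pins down the weight $\frac{p(p-1)}{(l+p-1)^2}$, and it is the step I expect to be the most delicate, since a suboptimal choice of Young parameters would leave an uncontrolled gradient term for $u_\varepsilon$.

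Finally I would convert the two surviving $v_\varepsilon$-gradient integrals into the stated right-hand side via the Cauchy–Schwarz inequality. The collected $I_4$-coefficient is $2\cdot\frac{p(p-1)}{16m^2}=\frac{p(p-1)}{2(l+p-1)^2}$, and writing $I_4=\int_\Omega\bigl(u_\varepsilon^{l+p-1}v_\varepsilon\bigr)\cdot\frac{|\nabla v_\varepsilon|^2}{v_\varepsilon^2}$ and applying Cauchy–Schwarz gives the first summand carrying $\frac{|\nabla v_\varepsilon|^4}{v_\varepsilon^3}\cdot\frac1{v_\varepsilon}$. For the other term I would invoke $v_\varepsilon\le\|v_0\|_{L^\infty(\Omega)}$ from \eqref{-2.9} to estimate $u_\varepsilon^{l+p-1}v_\varepsilon|\nabla v_\varepsilon|^2=\bigl(u_\varepsilon^{l+p-1}v_\varepsilon^{5/2}\bigr)\cdot\frac{|\nabla v_\varepsilon|^2}{v_\varepsilon^{3/2}}\le\|v_0\|_{L^\infty(\Omega)}^{3/2}\bigl(u_\varepsilon^{l+p-1}v_\varepsilon\bigr)\cdot\frac{|\nabla v_\varepsilon|^2}{v_\varepsilon^{3/2}}$, so that Cauchy–Schwarz produces the second summand with coefficient $\tfrac12 p(p-1)\|v_0\|_{L^\infty(\Omega)}^{3/2}$. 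Both resulting constants are dominated by $A$, while the reaction contributions $p\int_\Omega u_\varepsilon^p-p\int_\Omega u_\varepsilon^{p+1}$ carry through untouched; collecting these estimates yields the claimed inequality \eqref{0704-0024}.
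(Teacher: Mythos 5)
Your proposal is correct and takes essentially the same route as the paper: testing the first equation with $p u_\varepsilon^{p-1}$, absorbing the convective cross term by Young's inequality, trading the dissipation $\int_\Omega u_\varepsilon^{l+p-3}v_\varepsilon|\nabla u_\varepsilon|^2$ for $\int_\Omega\bigl|\nabla\bigl(u_\varepsilon^{\frac{l+p-1}{2}}v_\varepsilon^{\frac{1}{2}}\bigr)\bigr|^2$ at the cost of a $\int_\Omega u_\varepsilon^{l+p-1}v_\varepsilon^{-1}|\nabla v_\varepsilon|^2$ term, and finishing with the same two Cauchy--Schwarz estimates and the same constant $A$. The only difference is bookkeeping: where the paper writes $v_\varepsilon^{1/2}\nabla u_\varepsilon^{\frac{l+p-1}{2}}=\nabla\bigl(u_\varepsilon^{\frac{l+p-1}{2}}v_\varepsilon^{\frac{1}{2}}\bigr)-\frac{1}{2}u_\varepsilon^{\frac{l+p-1}{2}}v_\varepsilon^{-\frac{1}{2}}\nabla v_\varepsilon$ and invokes $(a-b)^2\geqslant\frac{1}{2}a^2-b^2$, you expand $|\nabla w|^2$ exactly and spend a second Young inequality on the resulting cross term $I_3$ --- algebraically equivalent steps that reproduce the paper's intermediate inequality \eqref{0923-1139} with identical coefficients.
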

\begin{proof}
Multiplying the first equation of \eqref{sys-regul} by $u_{\varepsilon}^{p-1}$, integrating by parts and using Young's inequality, we obtain 
\begin{align}\label{0703-1903xx}
& \dt \int_{\Omega} u_{\varepsilon}^p\nonumber\\
= &\ p \int_{\Omega} u_{\varepsilon}^{p-1}\left\{\nabla \cdot(u^{l-1}_{\varepsilon} v_{\varepsilon} \nabla u_{\varepsilon})- \nabla \cdot\left(u_{\varepsilon}^{l} v_{\varepsilon} \nabla v_{\varepsilon}\right)+ u_{\varepsilon} -u^2_{\varepsilon}\right\} \nonumber\\
= &-(p-1)p \int_{\Omega} u_{\varepsilon}^{p+l-3} v_{\varepsilon}\left|\nabla u_{\varepsilon}\right|^2+ (p-1)p \int_{\Omega} u_{\varepsilon}^{p+l-2} v_{\varepsilon} \nabla u_{\varepsilon} \cdot \nabla v_{\varepsilon} + p \int_{\Omega} u_{\varepsilon}^{p} -p \int_{\Omega} u_{\varepsilon}^{p+1}\nonumber\\
\leqslant & -\frac{(p-1)p}{2} \int_{\Omega} u_{\varepsilon}^{p+l-3} v_{\varepsilon} \left|\nabla u_{\varepsilon}\right|^2 +\frac{(p-1)p}{2} \int_{\Omega} u_{\varepsilon}^{p+l-1} v_{\varepsilon}\left|\nabla v_{\varepsilon}\right|^2+p \int_{\Omega} u_{\varepsilon}^{p}-p \int_{\Omega} u_{\varepsilon}^{p+1}
\end{align}
for all $t \in(0, T)$ and $\varepsilon \in(0,1)$. Since $(a-b)^2 \geqslant \frac{1}{2} a^2-b^2$ for all $a, b \in \mathbb{R}$, we see that
\begin{align*}
\int_{\Omega} u_{\varepsilon}^{l+p-3} v_{\varepsilon}|\nabla u_{\varepsilon}|^2 
= & \frac{4}{(l+p-1)^2} \int_{\Omega}\left|v_{\varepsilon}^{\frac{1}{2}} \nabla u_{\varepsilon}^{\frac{l+p-1}{2}}\right|^2 \\
= &\frac{4}{(l+p-1)^2}  \int_{\Omega}\left|\nabla\left(u_{\varepsilon}^{\frac{l+p-1}{2}} v_{\varepsilon}^{\frac{1}{2}}\right)-\frac{1}{2} u_{\varepsilon}^{\frac{l+p-1}{2}} v_{\varepsilon}^{-\frac{1}{2}} \nabla v_{\varepsilon}\right|^2 \\
\geqslant & \frac{2}{(l+p-1)^2} \int_{\Omega}\left|\nabla\left(u_{\varepsilon}^{\frac{l+p-1}{2}} v_{\varepsilon}^{\frac{1}{2}}\right)\right|^2-\frac{1}{(l+p-1)^2} \int_{\Omega} u_{\varepsilon}^{l+p-1} v_{\varepsilon}^{-1}|\nabla v_{\varepsilon}|^2
\end{align*}
This together with \eqref{0703-1903xx} implies
\begin{align}\label{0923-1139}
\dt \int_{\Omega} u_{\varepsilon}^p +\frac{(p-1)p}{(l+p-1)^2} \int_{\Omega}\left|\nabla\left(u_{\varepsilon}^{\frac{l+p-1}{2}} v_{\varepsilon}^{\frac{1}{2}}\right)\right|^2
\leqslant  & \frac{1}{2} \frac{(p-1)p}{(l+p-1)^2}  \int_{\Omega} u_{\varepsilon}^{l+p-1} v_{\varepsilon}^{-1}|\nabla v_{\varepsilon}|^2\nonumber\\
& +\frac{(p-1)p}{2} \int_{\Omega} u_{\varepsilon}^{p+l-1} v_{\varepsilon}\left|\nabla v_{\varepsilon}\right|^2\nonumber\\
& +p \int_{\Omega} u_{\varepsilon}^{p}-p \int_{\Omega} u_{\varepsilon}^{p+1}
\end{align}
for all $t \in(0, T)$ and $\varepsilon \in(0,1)$. By H\"{o}lder's inequality and \eqref{-2.9}, we show that
\begin{align*}
\int_{\Omega} u_{\varepsilon}^{l+p-1} v_{\varepsilon}^{-1}|\nabla v_{\varepsilon}|^2 
\leqslant \left\{\int_\Omega u_{\varepsilon}^{2(l+p-1)}v_{\varepsilon}^2\right\}^{\frac{1}{2}} \cdot\left\{\int_\Omega \frac{|\nabla v_{\varepsilon}|^4}{v_{\varepsilon}^3} \frac{1}{v_{\varepsilon}}\right\}^{\frac{1}{2}}
\end{align*}
and
\begin{align*}
\int_{\Omega} u_{\varepsilon}^{l+p-1} v_{\varepsilon}|\nabla v_{\varepsilon}|^2 
\leqslant \left\|v_{0} \right\|^\frac{3}{2}_{L^{\infty}(\Omega)} \cdot \left\{\int_\Omega u_{\varepsilon}^{2(l+p-1)}v_{\varepsilon}^2\right\}^{\frac{1}{2}} \cdot\left\{\int_\Omega \frac{|\nabla v_{\varepsilon}|^4}{v_{\varepsilon}^3} \right\}^{\frac{1}{2}}.
\end{align*}
In view of the above inequalities and \eqref{0923-1139}, we complete the proof.  
\end{proof}

From \eqref{-3.10}, we know that $\int_\Omega \frac{|\nabla v_{\varepsilon}|^4}{v_{\varepsilon}^3}\in L^1(0,T)$. However, this is not sufficient to solve the differential inequality \eqref{0704-0024} for $\int_{\Omega} u_{\varepsilon}^p$. To overcome this difficulty, we shall prove $\int_\Omega \frac{|\nabla v_{\varepsilon}|^4}{v_{\varepsilon}^3}\in L^\infty(0,T)$ and $\frac{1}{v_{\varepsilon}}\in L^\infty((0,T) \times \Omega)$. To this aim, we derive a differential inequality for the following energy-like functional $G_{\varepsilon}(t)$ and the estimate of the temporal-spatial integral of $u^2_{\varepsilon} \ln^2 (u_{\varepsilon}+e)$ (Lemma \ref{lem-3.5a}).
\begin{lem}\label{lemma-3.6}
Let $l\geqslant1$, let $T = \min \{\widetilde{T} , T_{\max,\varepsilon}\}$ for $\widetilde{T} \in \left(0, +\infty\right)$ and let
\begin{align*}
G_{\varepsilon}(t) =\left\{\begin{array}{lll}
\frac{4b}{(l-3)(l-2)} \int_{\Omega} u_{\varepsilon}^{3-l}+\int_{\Omega} \frac{\left|\nabla v_{\varepsilon}\right|^4}{v_{\varepsilon}^3} & \text { when } & 1 \leqslant l<2  \text {   or   }  l>3, \\
-\frac{4b}{(3-l)(l-2)} \int_{\Omega} u_{\varepsilon}^{3-l}+\int_{\Omega} \frac{\left|\nabla v_{\varepsilon}\right|^4}{v_{\varepsilon}^3} & \text { when } & 2<l<3,\\
4b \int_{\Omega} u_{\varepsilon} \ln u_{\varepsilon}+\int_{\Omega} \frac{\left|\nabla v_{\varepsilon}\right|^4}{v_{\varepsilon}^3} & \text { when } & l=2, \\
-4b \int_{\Omega} \ln u_{\varepsilon}+\int_{\Omega} \frac{\left|\nabla v_{\varepsilon}\right|^4}{v_{\varepsilon}^3} & \text { when } & l=3. 
\end{array}\right.
\end{align*}
Then we have
\begin{align}\label{0922-1558}
G_{\varepsilon}^{\prime}(t) + b \int_{\Omega} v_{\varepsilon}\left|\nabla u_{\varepsilon}\right|^2 + \int_{\Omega} u_{\varepsilon} v_{\varepsilon}^{-3}\left|\nabla v_{\varepsilon}\right|^4 
\leqslant &  4b  \int_{\Omega} u_{\varepsilon}^{2} v_{\varepsilon}\left|\nabla v_{\varepsilon}\right|^2  -\frac{4 b}{l-2}  \int_{\Omega} u^{3-l}_{\varepsilon} + \frac{4 b}{l-2}  \int_{\Omega} u^{4-l}_{\varepsilon}   \nonumber\\  
& + c |\Omega| \left\|v_0\right\|_{L^{\infty}(\Omega)} ~~ ~\text { when }  1 \leqslant l<2  \text {   or   } l>3
\end{align}
and
\begin{align}\label{0922-1561}
G_{\varepsilon}^{\prime}(t) + b \int_{\Omega} v_{\varepsilon}\left|\nabla u_{\varepsilon}\right|^2 + \int_{\Omega} u_{\varepsilon} v_{\varepsilon}^{-3}\left|\nabla v_{\varepsilon}\right|^4  
\leqslant &  4b  \int_{\Omega} u_{\varepsilon}^{2} v_{\varepsilon}\left|\nabla v_{\varepsilon}\right|^2 + \frac{4 b}{l-2}  \int_{\Omega} u^{3-l}_{\varepsilon}  \nonumber\\
& + c |\Omega| \left\|v_0\right\|_{L^{\infty}(\Omega)} \quad \text { when } 2<l<3
\end{align}
and
\begin{align}\label{0922-1559}
G_{\varepsilon}^{\prime}(t) + b \int_{\Omega} v_{\varepsilon}\left|\nabla u_{\varepsilon}\right|^2 & + \int_{\Omega} u_{\varepsilon} v_{\varepsilon}^{-3}\left|\nabla v_{\varepsilon}\right|^4  +4b \int_{\Omega} u^2_{\varepsilon} 
\leqslant   \frac{4b(e+1)}{e} \int_{\Omega} u_{\varepsilon} + 4b \int_{\Omega} u_{\varepsilon}  \ln u_{\varepsilon} \nonumber\\  
&
+ 4b \int_{\Omega} u_{\varepsilon}^{2} v_{\varepsilon}\left|\nabla v_{\varepsilon}\right|^2 + c |\Omega| \left\|v_0\right\|_{L^{\infty}(\Omega)} \quad \text { when } l=2
\end{align}
as well as
\begin{align}\label{0922-1560}
G_{\varepsilon}^{\prime}(t) + b \int_{\Omega} v_{\varepsilon}\left|\nabla u_{\varepsilon}\right|^2 + \int_{\Omega} u_{\varepsilon} v_{\varepsilon}^{-3}\left|\nabla v_{\varepsilon}\right|^4   
\leqslant &  4b \int_{\Omega} u_{\varepsilon}^{2} v_{\varepsilon}\left|\nabla v_{\varepsilon}\right|^2 + 4b \int_{\Omega} u_{\varepsilon} \nonumber\\
& +  c |\Omega| \left\|v_0\right\|_{L^{\infty}(\Omega)} \quad \text { when } l=3
\end{align}
for all $t \in(0, T)$ and $\varepsilon \in(0,1)$, where $b$ and $c$ are some positive constants independent of $\varepsilon$.
\end{lem}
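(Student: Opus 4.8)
The plan is to assemble $G_\varepsilon$ from two independently differentiated pieces, engineering the weight in the first so that the doubly degenerate diffusion becomes nondegenerate after testing. Write $G_\varepsilon(t)=\int_\Omega \Phi_l(u_\varepsilon)+\int_\Omega \frac{|\nabla v_\varepsilon|^4}{v_\varepsilon^3}$, where $\Phi_l$ denotes the case-dependent primitive in the statement; its unifying feature across all four branches is that $\Phi_l''(s)=4b\,s^{1-l}$, with $\Phi_l'(s)=-\frac{4b}{l-2}s^{2-l}$ away from the endpoints and the corresponding logarithmic expressions at $l=2,3$. First I would test the first equation of \eqref{sys-regul} against $\Phi_l'(u_\varepsilon)$ and integrate by parts. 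The identity $\Phi_l''(s)=4b\,s^{1-l}$ is exactly what makes the degenerate dissipation $\int_\Omega \Phi_l''(u_\varepsilon)u_\varepsilon^{l-1}v_\varepsilon|\nabla u_\varepsilon|^2$ collapse to the nondegenerate $4b\int_\Omega v_\varepsilon|\nabla u_\varepsilon|^2$; the taxis term then produces the cross term $4b\int_\Omega u_\varepsilon v_\varepsilon\nabla u_\varepsilon\cdot\nabla v_\varepsilon$, and the logistic source produces $\int_\Omega \Phi_l'(u_\varepsilon)(u_\varepsilon-u_\varepsilon^2)$, which unwinds into the explicit $u_\varepsilon^{3-l}$ and $u_\varepsilon^{4-l}$ contributions (respectively the $u_\varepsilon\ln u_\varepsilon$, $u_\varepsilon$ and $u_\varepsilon^2$ terms at the endpoints) recorded on the right-hand sides.

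Second, I would differentiate $\int_\Omega \frac{|\nabla v_\varepsilon|^4}{v_\varepsilon^3}$ by means of the second equation; this is the heavy part. After expanding $\frac{d}{dt}\big(|\nabla v_\varepsilon|^4 v_\varepsilon^{-3}\big)$ and rewriting the resulting $\nabla\Delta v_\varepsilon$ contribution through the pointwise integral identity of \cite[Lemma 3.2]{2012-CPDE-Winkler} (the same device underlying \eqref{-3.12}), the consumption term $-u_\varepsilon v_\varepsilon$ enters both through $\frac{d}{dt}(v_\varepsilon^{-3})$ and through $\nabla v_{\varepsilon t}$, and the two contributions combine into the single genuinely dissipative term $-\int_\Omega u_\varepsilon v_\varepsilon^{-3}|\nabla v_\varepsilon|^4$. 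This is precisely the quantity carried to the left-hand side of \eqref{0922-1558}. Alongside it one obtains a nonnegative Hessian-type dissipation, a boundary integral over $\partial\Omega$, and mixed terms involving $\nabla u_\varepsilon$ and $u_\varepsilon^2 v_\varepsilon|\nabla v_\varepsilon|^2$. The structure here parallels the lower-order computation for $\int_\Omega \frac{|\nabla v_\varepsilon|^2}{v_\varepsilon}$ in Lemma \ref{lem-2nd-est} and in \cite{2022-NARWA-Winkler}, but one order higher in $v_\varepsilon$.

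Third, the boundary integral is handled by the same three ingredients as in the proof of Lemma \ref{lem-2nd-est}: the pointwise normal-derivative estimate \cite[Lemma 4.2]{2014-aihcn-mizoguchi}, the boundary trace embedding \cite[Theorem 1, p.272]{2010--Evans}, and Young's inequality, i.e.\ the chain \eqref{n-3.13}, which absorb it into the Hessian dissipation up to a remainder bounded, via \eqref{-2.9}, by $c|\Omega|\|v_0\|_{L^\infty(\Omega)}$. The mixed terms are then removed by Young's inequality: applied to $4b\int_\Omega u_\varepsilon v_\varepsilon\nabla u_\varepsilon\cdot\nabla v_\varepsilon$ it consumes $3b\int_\Omega v_\varepsilon|\nabla u_\varepsilon|^2$ of the available $4b\int_\Omega v_\varepsilon|\nabla u_\varepsilon|^2$, so that exactly $b\int_\Omega v_\varepsilon|\nabla u_\varepsilon|^2$ survives to match the term displayed on the left-hand side of the claim, while the Young remainder adds a multiple of $\int_\Omega u_\varepsilon^2 v_\varepsilon|\nabla v_\varepsilon|^2$. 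The $\nabla u_\varepsilon$-cross term from the $v_\varepsilon$-computation is integrated by parts (its boundary contribution vanishes since $\partial v_\varepsilon/\partial\nu=0$), which produces further consumption dissipation, helping the left-hand side, together with Hessian terms that are again absorbed. Collecting all contributions, discarding the manifestly nonpositive source terms, and at $l=2$ invoking the elementary bounds for $s\mapsto s\ln s$ and $s\mapsto s^2\ln s$, yields \eqref{0922-1558}, \eqref{0922-1561}, \eqref{0922-1559} and \eqref{0922-1560} in the respective cases.

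The principal obstacle is the second step: executing the third-order-in-$v_\varepsilon$ differentiation of $\int_\Omega \frac{|\nabla v_\varepsilon|^4}{v_\varepsilon^3}$ and arranging the bookkeeping so that every boundary and mixed term, after a single use of Young's inequality, is dominated by the two dissipation functionals $b\int_\Omega v_\varepsilon|\nabla u_\varepsilon|^2$ and $\int_\Omega u_\varepsilon v_\varepsilon^{-3}|\nabla v_\varepsilon|^4$ together with the Hessian term, and with the precise constants $4b$, $b$ and $c$ claimed. The delicate points are getting the sign of the consumption-driven dissipation right and ensuring that the $|\nabla v_\varepsilon|^6$-type leftovers produced by Young are genuinely controlled by the Hessian dissipation rather than left dangling.
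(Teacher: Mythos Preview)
Your overall architecture matches the paper exactly: differentiate $\int_\Omega \Phi_l(u_\varepsilon)$ via the first equation so that the weight $\Phi_l''(s)=4bs^{1-l}$ cancels the degeneracy, differentiate $\int_\Omega |\nabla v_\varepsilon|^4/v_\varepsilon^3$ via the second equation, and balance the resulting cross terms by Young's inequality together with the boundary machinery from \cite{2014-aihcn-mizoguchi} and a trace embedding. Two points of divergence are worth flagging.

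First, the paper does not carry out the $\int_\Omega |\nabla v_\varepsilon|^4/v_\varepsilon^3$ computation from scratch as you propose; it invokes the ready-made inequality from \cite[Lemma~2.3]{2022-JDE-Li}, and it fixes $b$ as the constant coming from \cite[Lemma~3.4]{2022-DCDSSB-Winkler}, which guarantees $\int_\Omega |\nabla v_\varepsilon|^6/v_\varepsilon^5 \le b\int_\Omega v_\varepsilon^{-1}|\nabla v_\varepsilon|^2|D^2\ln v_\varepsilon|^2$ and the analogous bound for $\int_\Omega v_\varepsilon^{-3}|\nabla v_\varepsilon|^2|D^2 v_\varepsilon|^2$. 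That is, $b$ is not a free Young parameter but the constant from these functional inequalities, and the boundary term is handled via \cite[Lemma~3.5]{2022-JDE-Li}.

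Second, and more substantively, your treatment of the $\nabla u_\varepsilon$-cross term arising from the $v_\varepsilon$-computation is not what the paper does, and your version may not close. You propose to integrate $-4\int_\Omega v_\varepsilon^{-2}|\nabla v_\varepsilon|^2(\nabla u_\varepsilon\cdot\nabla v_\varepsilon)$ by parts; this indeed yields a helpful $-8\int_\Omega u_\varepsilon v_\varepsilon^{-3}|\nabla v_\varepsilon|^4$, but it also produces $u_\varepsilon$-weighted Hessian integrals such as $\int_\Omega u_\varepsilon v_\varepsilon^{-2}(D^2 v_\varepsilon\nabla v_\varepsilon)\cdot\nabla v_\varepsilon$, and the available Hessian dissipation $\int_\Omega v_\varepsilon^{-1}|\nabla v_\varepsilon|^2|D^2\ln v_\varepsilon|^2$ carries no $u_\varepsilon$ weight, so absorption is not automatic. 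The paper instead applies Young directly to this cross term, obtaining $\frac{2}{b}\int_\Omega |\nabla v_\varepsilon|^6/v_\varepsilon^5 + 2b\int_\Omega v_\varepsilon|\nabla u_\varepsilon|^2$; the sixth-order piece is absorbed by the Hessian dissipation via the $b$-inequality above, while the \emph{bad} $2b\int_\Omega v_\varepsilon|\nabla u_\varepsilon|^2$ is then cancelled against the $3b\int_\Omega v_\varepsilon|\nabla u_\varepsilon|^2$ that remains from the $u_\varepsilon$-computation after Young there consumes only $b$ (not $3b$, as you wrote). The net $b$ on the left is thus $3b-2b$, with the split occurring across the two equations rather than within the $u_\varepsilon$-step alone.
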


\begin{proof}
Based on \cite[Lemma 3.4]{2022-DCDSSB-Winkler}, we see that there exists a constant $b>0$ such that
\begin{align}\label{-3.21}
\int_{\Omega} \frac{|\nabla \phi|^6}{\phi^5} \leqslant b \int_{\Omega} \phi^{-1}|\nabla \phi|^2\left|D^2 \ln \phi\right|^2
\end{align}
and
\begin{align}\label{-3.21a}
\int_{\Omega} \phi^{-3}|\nabla \phi|^{2}\left|D^2 \varphi\right|^2 \leqslant b \int_{\Omega} \phi^{-1}|\nabla \phi|^{2}\left|D^2 \ln \phi\right|^2.
\end{align}
In view of \cite[Lemma 2.3]{2022-JDE-Li}, we obtain 
\begin{align}\label{-3.22}
\frac{d}{d t} \int_{\Omega} \frac{\left|\nabla v_{\varepsilon}\right|^4}{v_{\varepsilon}^3} + & 4 \int_{\Omega} v_{\varepsilon}^{-1}\left|\nabla v_{\varepsilon}\right|^2\left|D^2 \ln v_{\varepsilon}\right|^2  +\int_{\Omega} u_{\varepsilon} v_{\varepsilon}^{-3}\left|\nabla v_{\varepsilon}\right|^4  \nonumber\\
\leqslant & -4 \int_{\Omega} v_{\varepsilon}^{-2}\left|\nabla v_{\varepsilon}\right|^2\left(\nabla u_{\varepsilon} \cdot \nabla v_{\varepsilon}\right)
+ 2 \int_{\partial \Omega} v_{\varepsilon}^{-3}\left|\nabla v_{\varepsilon}\right|^2 \frac{\partial \left|\nabla v_{\varepsilon}\right|^2}{\partial \nu}.
\end{align}
for all $t \in(0, T)$ and $\varepsilon \in(0,1)$. Invoking Young's inequality and \eqref{-3.21}, we deduce that
\begin{align}\label{-3.23}
-4 \int_{\Omega} v_{\varepsilon}^{-2}\left|\nabla v_{\varepsilon}\right|^2\left(\nabla u_{\varepsilon} \cdot \nabla v_{\varepsilon}\right) & \leqslant \frac{2}{b} \int_{\Omega} \frac{\left|\nabla v_{\varepsilon}\right|^6}{v_{\varepsilon}^5}+2 b \int_{\Omega} v_{\varepsilon}\left|\nabla u_{\varepsilon}\right|^2 \nonumber\\
& \leqslant 2 \int_{\Omega} v_{\varepsilon}^{-1}\left|\nabla v_{\varepsilon}\right|^2\left|D^2 \ln v_{\varepsilon}\right|^2 +2 b \int_{\Omega} v_{\varepsilon}\left|\nabla u_{\varepsilon}\right|^2.
\end{align}
In light of \cite[Lemma 3.5]{2022-JDE-Li}, \eqref{-2.9} and \eqref{-3.21a}, we get
\begin{align}\label{-3.23a}
2 \int_{\partial \Omega} v_{\varepsilon}^{-3}|\nabla v_{\varepsilon}|^{2} \cdot \frac{\partial|\nabla v_{\varepsilon}|^2}{\partial \nu} 
\leqslant & \frac{1}{b} \int_{\Omega} v_{\varepsilon}^{-3}|\nabla v_{\varepsilon}|^{2}\left|D^2 v_{\varepsilon}\right|^2+\frac{1}{b} \int_{\Omega} \frac{\left|\nabla v_{\varepsilon}\right|^6}{v_{\varepsilon}^5}+c \int_{\Omega} v_{\varepsilon}\nonumber\\
\leqslant &  2 \int_{\Omega} v_{\varepsilon}^{-1}\left|\nabla v_{\varepsilon}\right|^2\left|D^2 \ln v_{\varepsilon}\right|^2 + c |\Omega| \left\|v_0\right\|_{L^{\infty}(\Omega)}. 
\end{align}
Gathering \eqref{-3.22}-\eqref{-3.23a}, we see that
\begin{align}\label{-3.22aa}
\frac{d}{d t} \int_{\Omega} \frac{\left|\nabla v_{\varepsilon}\right|^4}{v_{\varepsilon}^3} +\int_{\Omega} u_{\varepsilon} v_{\varepsilon}^{-3}\left|\nabla v_{\varepsilon}\right|^4  
\leqslant  2 b \int_{\Omega} v_{\varepsilon}\left|\nabla u_{\varepsilon}\right|^2+ c |\Omega| \left\|v_0\right\|_{L^{\infty}(\Omega)} 
\end{align}
for all $t \in(0, T)$ and $\varepsilon \in(0,1)$. 

The test functions in the proof are different according to the value of $l$. 

Case I: $l\neq2$ and $l\neq3$. Multiplying the first equation in \eqref{sys-regul} by $u^{2-l}_{\varepsilon}$, we infer that
\begin{align}\label{0921-1843a}
\dt \int_{\Omega} u^{3-l}_{\varepsilon} 
= & (3-l)\int_{\Omega} u^{2-l}_{\varepsilon} \cdot \left\{\nabla \cdot \left(u^{l-1}_{\varepsilon} v_{\varepsilon} \nabla u_{\varepsilon }-u^{l}_{\varepsilon} v_{\varepsilon} \nabla v_{\varepsilon }\right)+ u_{\varepsilon} - u_{\varepsilon}^2 \right\} \nonumber\\
= &  (3-l)(l-2)\int_{\Omega} v_{\varepsilon}|\nabla u_{\varepsilon }|^2-(3-l)(l-2) \int_{\Omega} u_{\varepsilon} v_{\varepsilon} \nabla u_{\varepsilon }\cdot \nabla v_{\varepsilon } \nonumber\\
& +(3-l)\int_{\Omega}u^{3-l}_{\varepsilon} - (3-l)\int_{\Omega}u^{4-l}_{\varepsilon}\quad \text { for all } t \in(0, T) \text { and } \varepsilon \in(0,1).
\end{align}
 
In this case, we need to consider two subcases. 

Subcase 1: $1 \leqslant l<2$ or $l>3$. From \eqref{0921-1843a} and Young’s inequality, we obtain
\begin{small}
\begin{align*}
\dt \int_{\Omega} u^{3-l}_{\varepsilon} + (l-3)(l-2)\int_{\Omega} v_{\varepsilon}|\nabla u_{\varepsilon }|^2 
= &  (l-3)(l-2) \int_{\Omega} u_{\varepsilon} v_{\varepsilon} \nabla u_{\varepsilon }\cdot \nabla v_{\varepsilon }+(3-l)\int_{\Omega}u^{3-l}_{\varepsilon} \nonumber\\
&  - (3-l)\int_{\Omega}u^{4-l}_{\varepsilon} \nonumber\\
\leqslant  & \frac{(l-3)(l-2)}{4}\int_{\Omega} v_{\varepsilon}|\nabla u_{\varepsilon }|^2  + (l-3)(l-2) \int_{\Omega} u^{2}_{\varepsilon} v_{\varepsilon}\left|\nabla v_{\varepsilon}\right|^2 \nonumber\\
& + (3-l) \int_{\Omega}u^{3-l}_{\varepsilon}  - (3-l)\int_{\Omega}u^{4-l}_{\varepsilon},
\end{align*} 
\end{small}
so that
\begin{align}\label{0921-1104}
\frac{4b}{(l-3)(l-2)}  \dt \int_{\Omega} u^{3-l}_{\varepsilon} +3 b \int_{\Omega} v_{\varepsilon}\left|\nabla u_{\varepsilon}\right|^2  \leqslant & 4b  \int_{\Omega} u_{\varepsilon}^{2} v_{\varepsilon}\left|\nabla v_{\varepsilon}\right|^2  -\frac{4 b}{l-2}  \int_{\Omega} u^{3-l}_{\varepsilon} \nonumber\\
& + \frac{4 b}{l-2}  \int_{\Omega} u^{4-l}_{\varepsilon}. 
\end{align} 
for all $t \in(0, T)$ and $\varepsilon \in(0,1)$. We set
\begin{align*}
G_{\varepsilon}(t)= \frac{4b}{(l-3)(l-2)}\int_{\Omega} u^{3-l}_{\varepsilon} +\int_{\Omega} \frac{\left|\nabla v_{\varepsilon}\right|^4}{v_{\varepsilon}^3}\quad \text { for all } t \in(0, T) \text { and } \varepsilon \in(0,1).
\end{align*}
Combining \eqref{-3.22aa} with \eqref{0921-1104}, we conclude that
\begin{align*}
G_{\varepsilon}^{\prime}(t) + b \int_{\Omega} v_{\varepsilon}\left|\nabla u_{\varepsilon}\right|^2 + \int_{\Omega} u_{\varepsilon} v_{\varepsilon}^{-3}\left|\nabla v_{\varepsilon}\right|^4 
\leqslant & 4b  \int_{\Omega} u_{\varepsilon}^{2} v_{\varepsilon}\left|\nabla v_{\varepsilon}\right|^2  -\frac{4 b}{l-2}  \int_{\Omega} u^{3-l}_{\varepsilon} + \frac{4 b}{l-2}  \int_{\Omega} u^{4-l}_{\varepsilon} \nonumber\\
& + c |\Omega| \left\|v_0\right\|_{L^{\infty}(\Omega)}
\end{align*}
for all $t \in(0, T)$ and $\varepsilon \in(0,1)$, which implies \eqref{0922-1558}.

Subcase 2: $2 < l<3$. From \eqref{0921-1843a} and Young’s inequality, we see that
\begin{small}
\begin{align*}
- \dt \int_{\Omega} u^{3-l}_{\varepsilon} + (3-l)(l-2)\int_{\Omega} v_{\varepsilon}|\nabla u_{\varepsilon }|^2 
= &  (3-l)(l-2) \int_{\Omega} u_{\varepsilon} v_{\varepsilon} \nabla u_{\varepsilon }\cdot \nabla v_{\varepsilon } \nonumber\\
& -(3-l)\int_{\Omega}u^{3-l}_{\varepsilon} + (3-l)\int_{\Omega}u^{4-l}_{\varepsilon} \nonumber\\
\leqslant  & \frac{(3-l)(l-2)}{4}\int_{\Omega} v_{\varepsilon}|\nabla u_{\varepsilon }|^2  + (3-l)(l-2) \int_{\Omega} u^{2}_{\varepsilon} v_{\varepsilon}\left|\nabla v_{\varepsilon}\right|^2 \nonumber\\
& - (3-l) \int_{\Omega}u^{3-l}_{\varepsilon}  + (3-l)\int_{\Omega}u^{4-l}_{\varepsilon},
\end{align*} 
\end{small}
so that
\begin{align}\label{0921-2120}
-\frac{4b}{(3-l)(l-2)} \dt \int_{\Omega} u^{3-l}_{\varepsilon} + 3b \int_{\Omega} v_{\varepsilon}|\nabla u_{\varepsilon }|^2 
\leqslant &  4 b \int_{\Omega} u^{2}_{\varepsilon} v_{\varepsilon}\left|\nabla v_{\varepsilon}\right|^2 + \frac{4 b}{l-2}   \int_{\Omega}u^{4-l}_{\varepsilon}
\end{align}
for all $t \in(0, T)$ and $\varepsilon \in(0,1)$. We set
\begin{align*}
G_{\varepsilon}(t)=- \frac{4b}{(3-l)(l-2)}\int_{\Omega} u^{3-l}_{\varepsilon} +\int_{\Omega} \frac{\left|\nabla v_{\varepsilon}\right|^4}{v_{\varepsilon}^3}\quad \text { for all } t \in(0, T) \text { and } \varepsilon \in(0,1).
\end{align*}
Combining \eqref{-3.22aa} with \eqref{0921-2120}, we infer that
\begin{align*}
G_{\varepsilon}^{\prime}(t) + b \int_{\Omega} v_{\varepsilon}\left|\nabla u_{\varepsilon}\right|^2 + \int_{\Omega} u_{\varepsilon} v_{\varepsilon}^{-3}\left|\nabla v_{\varepsilon}\right|^4 
\leqslant &  4b  \int_{\Omega} u_{\varepsilon}^{2} v_{\varepsilon}\left|\nabla v_{\varepsilon}\right|^2 + \frac{4 b}{l-2}  \int_{\Omega} u^{3-l}_{\varepsilon}\\
& + c |\Omega| \left\|v_0\right\|_{L^{\infty}(\Omega)},
\end{align*}
for all $t \in(0, T)$ and $\varepsilon \in(0,1)$, which implies \eqref{0922-1561}.

Case II: $l=2$. Multiplying the first equation in \eqref{sys-regul} by $1+\ln u_{\varepsilon}$, we use Cauchy-Schwarz inequality and $\xi \ln \xi +\frac{1}{e}\geqslant 0$ for all $\xi>0$ to obtain
\begin{align*}
\dt \int_{\Omega} u_{\varepsilon} \ln u_{\varepsilon}+\int_{\Omega} v_{\varepsilon}\left|\nabla u_{\varepsilon}\right|^2 
= &  \int_{\Omega} u_{\varepsilon} v_{\varepsilon} \nabla u_{\varepsilon} \cdot \nabla v_{\varepsilon}+ \int_{\Omega} u_{\varepsilon}   - \int_{\Omega} u^2_{\varepsilon}   + \int_{\Omega} u_{\varepsilon}  \ln u_{\varepsilon} - \int_{\Omega} u^2_{\varepsilon}  \ln u_{\varepsilon}\nonumber\\
\leqslant &   \frac{1}{4} \int_{\Omega} v_{\varepsilon}\left|\nabla u_{\varepsilon}\right|^2+  \int_{\Omega} u_{\varepsilon}^{2} v_{\varepsilon}\left|\nabla v_{\varepsilon}\right|^2 + \int_{\Omega} u_{\varepsilon}   - \int_{\Omega} u^2_{\varepsilon}   + \int_{\Omega} u_{\varepsilon}  \ln u_{\varepsilon} \nonumber\\
& + \frac{1}{e} \int_{\Omega} u_{\varepsilon} \nonumber\\
= &   \frac{1}{4} \int_{\Omega} v_{\varepsilon}\left|\nabla u_{\varepsilon}\right|^2+  \int_{\Omega} u_{\varepsilon}^{2} v_{\varepsilon}\left|\nabla v_{\varepsilon}\right|^2 + \frac{e+1}{e} \int_{\Omega} u_{\varepsilon}  - \int_{\Omega} u^2_{\varepsilon}  \nonumber\\
&  + \int_{\Omega} u_{\varepsilon}  \ln u_{\varepsilon}, 
\end{align*}
so that
\begin{align}\label{0921-1256}
4b \dt \int_{\Omega} u_{\varepsilon} \ln u_{\varepsilon}+3b \int_{\Omega} v_{\varepsilon}\left|\nabla u_{\varepsilon}\right|^2 +4b \int_{\Omega} u^2_{\varepsilon}   
\leqslant &   \frac{4b(e+1)}{e} \int_{\Omega} u_{\varepsilon}  + 4b \int_{\Omega} u_{\varepsilon}  \ln u_{\varepsilon} \nonumber\\
&
+ 4b \int_{\Omega} u_{\varepsilon}^{2} v_{\varepsilon}\left|\nabla v_{\varepsilon}\right|^2 
\end{align}
for all $t \in(0, T)$ and $\varepsilon \in(0,1)$. We set
\begin{align*}
G_{\varepsilon}(t)= 4b \int_{\Omega} u_{\varepsilon} \ln u_{\varepsilon} +\int_{\Omega} \frac{\left|\nabla v_{\varepsilon}\right|^4}{v_{\varepsilon}^3}\quad \text { for all } t \in(0, T) \text { and } \varepsilon \in(0,1).
\end{align*}
Combining \eqref{-3.22aa} with \eqref{0921-1256}, we conclude that
\begin{align*}
G_{\varepsilon}^{\prime}(t) + b \int_{\Omega} v_{\varepsilon}\left|\nabla u_{\varepsilon}\right|^2 + \int_{\Omega} u_{\varepsilon} v_{\varepsilon}^{-3}\left|\nabla v_{\varepsilon}\right|^4 +4b \int_{\Omega} u^2_{\varepsilon}   
\leqslant & \frac{4b(e+1)}{e} \int_{\Omega} u_{\varepsilon} + 4b \int_{\Omega} u_{\varepsilon}  \ln u_{\varepsilon} \nonumber\\
& + 4b \int_{\Omega} u_{\varepsilon}^{2} v_{\varepsilon}\left|\nabla v_{\varepsilon}\right|^2 + c |\Omega| \left\|v_0\right\|_{L^{\infty}(\Omega)},
\end{align*}
for all $t \in(0, T)$ and $\varepsilon \in(0,1)$, which implies \eqref{0922-1559}.

Case III: $l=3$. Multiplying the first equation in \eqref{sys-regul} by $-u^{-l}_{\varepsilon}$, we use Cauchy-Schwarz inequality to show that
\begin{align*}
- \dt \int_{\Omega}\ln u_{\varepsilon} 
= & - \int_{\Omega} u^{-1}_{\varepsilon} \cdot \left\{\nabla \cdot \left(u^{2}_{\varepsilon} v_{\varepsilon} \nabla u_{\varepsilon }-u^{3}_{\varepsilon} v_{\varepsilon} \nabla v_{\varepsilon }\right)+ u_{\varepsilon} - u_{\varepsilon}^2 \right\} \nonumber\\
= &  -\int_{\Omega} v_{\varepsilon}|\nabla u_{\varepsilon }|^2 + \int_{\Omega} u_{\varepsilon} v_{\varepsilon} \nabla u_{\varepsilon }\cdot \nabla v_{\varepsilon } 
-|\Omega| + \int_{\Omega}u_{\varepsilon}\nonumber\\
\leqslant &  - \frac{3}{4} \int_{\Omega} v_{\varepsilon}\left|\nabla u_{\varepsilon}\right|^2+  \int_{\Omega} u_{\varepsilon}^{2} v_{\varepsilon}\left|\nabla v_{\varepsilon}\right|^2 + \int_{\Omega} u_{\varepsilon}, 
\end{align*} 
so that
\begin{align}\label{0921-2309}
- 4b \dt \int_{\Omega}\ln u_{\varepsilon} + 3b \int_{\Omega} v_{\varepsilon}\left|\nabla u_{\varepsilon}\right|^2
\leqslant  4b \int_{\Omega} u_{\varepsilon}^{2} v_{\varepsilon}\left|\nabla v_{\varepsilon}\right|^2 + 4b \int_{\Omega} u_{\varepsilon}
\end{align}
for all $t \in(0, T)$ and $\varepsilon \in(0,1)$. We set
\begin{align*}
G_{\varepsilon}(t)= -4b \int_{\Omega} \ln u_{\varepsilon} +\int_{\Omega} \frac{\left|\nabla v_{\varepsilon}\right|^4}{v_{\varepsilon}^3}\quad \text { for all } t \in(0, T) \text { and } \varepsilon \in(0,1).
\end{align*}
Combining \eqref{-3.22aa} with \eqref{0921-2309}, we conclude that
\begin{align*}
G_{\varepsilon}^{\prime}(t) + b \int_{\Omega} v_{\varepsilon}\left|\nabla u_{\varepsilon}\right|^2 + \int_{\Omega} u_{\varepsilon} v_{\varepsilon}^{-3}\left|\nabla v_{\varepsilon}\right|^4  
\leqslant  4b \int_{\Omega} u_{\varepsilon}^{2} v_{\varepsilon}\left|\nabla v_{\varepsilon}\right|^2 + 4b \int_{\Omega} u_{\varepsilon} +  c |\Omega| \left\|v_0\right\|_{L^{\infty}(\Omega)}
\end{align*}
for all $t \in(0, T)$ and $\varepsilon \in(0,1)$, which implies \eqref{0922-1560}.
\end{proof}
We are now in a position to derive the estimate for $\frac{|\nabla v_{\varepsilon }|^4}{v_{\varepsilon}^3}$ in $L^\infty((0, T), L^1(\Omega))$ independent of $\varepsilon$ applying Lemma \ref{lemma-3.5} and  the above energy-like functional differential inequality.
\begin{lem}\label{lemma-3.8}
Let $l\geqslant1$ and $T = \min \{\widetilde{T} , T_{\max,\varepsilon}\}$ for $\widetilde{T} \in \left(0, +\infty\right)$, and assume that \eqref{assIniVal} holds. Then there exists $C(T)>0$ such that
\begin{align}\label{-3.29}
\int_{\Omega} \frac{|\nabla v_{\varepsilon}|^4}{v_{\varepsilon}^3} \leqslant C(T) \quad \text { for all } t \in(0, T) \text { and } \varepsilon \in(0,1),
\end{align}
where $C(T)$ is a positive constant depending on $ \int_{\Omega} \ln u_0$, $\int_{\Omega} |\nabla v_{0 }|^2$, $\int_{\Omega} \frac{|\nabla v_{0 }|^2}{v_0}$ and $\int_{\Omega} u_{0}^{3-l}$ $\int_{\Omega} u_0$, but independent of $\varepsilon$. 
 \end{lem}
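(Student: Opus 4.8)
The plan is to use the energy functional $G_\varepsilon$ of Lemma \ref{lemma-3.6} as the vehicle, since in each of the four $l$-regimes the quantity $\int_\Omega \frac{|\nabla v_\varepsilon|^4}{v_\varepsilon^3}$ is precisely one of the two summands of $G_\varepsilon(t)$. Accordingly I would integrate the differential inequalities \eqref{0922-1558}--\eqref{0922-1560} over $(0,t)$ and aim to show that $\sup_{t\in(0,T)}G_\varepsilon(t)\leq C(T)$ uniformly in $\varepsilon$; the desired estimate \eqref{-3.29} then follows after subtracting off the $u_\varepsilon$-summand of $G_\varepsilon$, which is itself controlled from above and below by the mass bound \eqref{-3.4} together with the initial-data hypotheses \eqref{assIniVal}. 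This is exactly why \eqref{assIniVal} is case-split so that $\int_\Omega \ln u_0$, $\int_\Omega u_0 \ln u_0$, or $\int_\Omega u_0^{3-l}$ is finite, guaranteeing a finite, $\varepsilon$-independent value of $G_\varepsilon(0)$.

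The routine part is the treatment of the lower-order terms on the right-hand sides of \eqref{0922-1558}--\eqref{0922-1560}. After invoking $v_\varepsilon\leq\|v_0\|_{L^\infty(\Omega)}$ from \eqref{-2.9}, each of $\int_\Omega u_\varepsilon^{3-l}$, $\int_\Omega u_\varepsilon^{4-l}$, $\int_\Omega u_\varepsilon$ and $\int_\Omega u_\varepsilon\ln u_\varepsilon$ is dominated by an elementary expression in $u_\varepsilon$ whose time integral is bounded uniformly in $\varepsilon$ by the mass estimate \eqref{-3.4} and the space-time bound \eqref{-3.5}; the constant contribution $c|\Omega|\|v_0\|_{L^\infty(\Omega)}$ integrates to a multiple of $T$. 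For $l=2$ the extra good term $4b\int_\Omega u_\varepsilon^2$ sits on the left of \eqref{0922-1559} and may simply be discarded.

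The crux is the genuinely dangerous cross term $4b\int_\Omega u_\varepsilon^2 v_\varepsilon|\nabla v_\varepsilon|^2$, which appears in every regime. Writing its integrand as $\bigl(u_\varepsilon v_\varepsilon^{-3}|\nabla v_\varepsilon|^4\bigr)^{1/2}\cdot u_\varepsilon^{3/2}v_\varepsilon^{5/2}$ and applying Young's inequality, I would absorb a small multiple of the dissipation term $\int_\Omega u_\varepsilon v_\varepsilon^{-3}|\nabla v_\varepsilon|^4$ into the left-hand side, at the cost of a residual $C\|v_0\|_{L^\infty(\Omega)}^5\int_\Omega u_\varepsilon^3$. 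The whole argument thus reduces to an $\varepsilon$-uniform bound on the superquadratic space-time integral $\int_0^T\int_\Omega u_\varepsilon^3$, and this is where Lemma \ref{lemma-3.5} is meant to enter: the space-time estimate for $u_\varepsilon^2\ln^2(u_\varepsilon+e)$, used in conjunction with the degenerate gradient dissipation $\int_\Omega v_\varepsilon|\nabla u_\varepsilon|^2$ retained on the left of $G_\varepsilon'$ and a two-dimensional Gagliardo--Nirenberg interpolation, should furnish the required integrability. An essentially equivalent route keeps the cross term unabsorbed, estimates it by the Cauchy--Schwarz inequality as $C(\int_\Omega u_\varepsilon^4)^{1/2}\cdot\bigl(\int_\Omega \frac{|\nabla v_\varepsilon|^4}{v_\varepsilon^3}\bigr)^{1/2}$, and then integrates in time using that $\int_\Omega\frac{|\nabla v_\varepsilon|^4}{v_\varepsilon^3}\in L^1(0,T)$ by Lemma \ref{lem-2nd-est}.

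I expect this cross term to be the main obstacle, precisely because the double degeneracy blocks the naive strategy: the only gradient dissipation available for $u_\varepsilon$ carries the vanishing weight $v_\varepsilon$, so one cannot directly convert it into an $L^3$ or $L^4$ space-time bound through Gagliardo--Nirenberg without first exploiting the logarithmically reinforced $L^2$ information of Lemma \ref{lemma-3.5}. Once the right-hand side is shown to be integrable in time uniformly in $\varepsilon$, integrating the $G_\varepsilon$-inequality and discarding the nonnegative dissipation terms yields $G_\varepsilon(t)\leq G_\varepsilon(0)+C(T)$ for all $t\in(0,T)$, whence \eqref{-3.29} follows.
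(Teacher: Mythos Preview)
Your handling of the cross term $4b\int_\Omega u_\varepsilon^2 v_\varepsilon|\nabla v_\varepsilon|^2$ is where the argument breaks. Option~A, after Young's inequality, leaves you with $C\int_\Omega u_\varepsilon^3$; Option~B leaves you with a factor $(\int_\Omega u_\varepsilon^4)^{1/2}$. Neither quantity is under control at this point. The space--time bound on $u_\varepsilon^2\ln^2(u_\varepsilon+e)$ you appeal to is strictly weaker than any $L^3$ control, and the only gradient dissipation for $u_\varepsilon$ available on the left of the $G_\varepsilon$-inequality carries the degenerate weight $v_\varepsilon$: without a pointwise lower bound on $v_\varepsilon$---which you do not invoke, and which in the paper's ordering is established only later as Lemma~\ref{lem-14.3}---a two-dimensional Gagliardo--Nirenberg step from $\int_\Omega v_\varepsilon|\nabla u_\varepsilon|^2$ to $\int_\Omega u_\varepsilon^3$ cannot be carried out. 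So the intended conclusion $G_\varepsilon(t)\leq G_\varepsilon(0)+C(T)$ via straight time-integration is unjustified. (There is also a label confusion: Lemma~\ref{lemma-3.5} is the Sobolev-type functional inequality of the Appendix, \emph{not} the $u^2\ln^2 u$ estimate, which is Lemma~\ref{lem-3.5a}.)

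The paper's mechanism is structurally different. It applies the Appendix inequality \eqref{eq-6.4} with $p=1$, $\varphi=u_\varepsilon$, $\psi=v_\varepsilon$, $\eta=\tfrac18$, obtaining
\[
4b\int_\Omega u_\varepsilon^2 v_\varepsilon|\nabla v_\varepsilon|^2 \;\leq\; \tfrac{b}{2}\int_\Omega v_\varepsilon|\nabla u_\varepsilon|^2 \;+\; C\Bigl(\int_\Omega u_\varepsilon^2\Bigr)\int_\Omega\frac{|\nabla v_\varepsilon|^4}{v_\varepsilon^3} \;+\; \text{(lower order)}.
\]
The first term is absorbed into the dissipation on the left; in the middle term one replaces $\int_\Omega\frac{|\nabla v_\varepsilon|^4}{v_\varepsilon^3}$ by $G_\varepsilon(t)$ (plus lower-order corrections, since in every regime the $u_\varepsilon$-summand of $G_\varepsilon$ is bounded below via \eqref{-3.4}). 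This yields a \emph{linear} differential inequality
\[
G_\varepsilon'(t)\leq Z_\varepsilon(t)\,G_\varepsilon(t)+M_\varepsilon(t),\qquad Z_\varepsilon(t)=C\int_\Omega u_\varepsilon^2(t),
\]
with $Z_\varepsilon,M_\varepsilon\in L^1(0,T)$ uniformly in $\varepsilon$ by \eqref{-2.10}, Lemma~\ref{lem-1st-est1}, and Lemma~\ref{lem-2nd-est}. Gr\"onwall then gives $G_\varepsilon(t)\leq C(T)$ and hence \eqref{-3.29}. The key point is that \eqref{eq-6.4} trades the unavailable $L^3$ input on $u_\varepsilon$ for the already-known $\int_0^T\int_\Omega u_\varepsilon^2\leq C(T)$, at the price of feeding $G_\varepsilon$ back onto the right-hand side---which is precisely what makes the argument a Gr\"onwall step rather than the direct integration you envisioned.
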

\begin{proof}
An application of \eqref{eq-6.4} with $p=1$ and $\eta =\frac{1}{8}$ provides $c_1>0$ such that
\begin{align}\label{0922-1640}
 4b   \int_\Omega u_{\varepsilon}^{2} v_{\varepsilon} |\nabla v_{\varepsilon}|^2
\leqslant & \frac{b}{2} \int_\Omega  v_{\varepsilon}|^2\nabla u_{\varepsilon}|^2 + \left\{c_1b \left\|v_{\varepsilon} \right\|_{L^{\infty}(\Omega)}+8 c_1 b    \left\|v_{\varepsilon} \right\|^4_{L^{\infty}(\Omega)}\right\} \cdot \int_{\Omega} u_{\varepsilon}^{2} \cdot \int_\Omega \frac{|\nabla v_{\varepsilon}|^4}{v_{\varepsilon}^3} \nonumber\\
&+c_1 b \left\|v_{\varepsilon} \right\|^2_{L^{\infty}(\Omega)} \cdot\left\{\int_\Omega u_{\varepsilon}\right\}^{3} \cdot \int_\Omega \frac{|\nabla v_{\varepsilon}|^4}{v_{\varepsilon}^3}+c_1 b  \left\|v_{\varepsilon} \right\|^2_{L^{\infty}(\Omega)}\cdot \int_{\Omega} u_{\varepsilon} v_{\varepsilon}.
\end{align}

The proofs of this lemma are different according to the value of $l$.

Case I: $ 1 \leqslant l < 2$. Substituting \eqref{0922-1640} into \eqref{0922-1558}, thanks to \eqref{-2.9}, \eqref{-3.4} and Young’s inequality, for all $t \in(0, T)$ and $\varepsilon \in(0,1)$, we obtain 
\begin{align}\label{-3.33}
G_{\varepsilon}^{\prime}(t) + \frac{b}{2} \int_{\Omega} v_{\varepsilon}\left|\nabla u_{\varepsilon}\right|^2 + \int_{\Omega} u_{\varepsilon} v_{\varepsilon}^{-3}\left|\nabla v_{\varepsilon}\right|^4 
\leqslant & B \int_{\Omega} u_{\varepsilon}^{2}   \cdot \int_\Omega \frac{|\nabla v_{\varepsilon}|^4}{v_{\varepsilon}^3} +B \int_\Omega \frac{|\nabla v_{\varepsilon}|^4}{v_{\varepsilon}^3}+B \int_{\Omega} u_{\varepsilon} v_{\varepsilon} \nonumber\\
& + \frac{4 b}{2-l}  \int_{\Omega} u^{3-l}_{\varepsilon}    + c |\Omega| \left\|v_0\right\|_{L^{\infty}(\Omega)}\nonumber\\
\leqslant & B \int_{\Omega} u_{\varepsilon}^{2}   \cdot \left(\frac{4b}{(l-3)(l-2)} \int_{\Omega} u_{\varepsilon}^{3-l}+\int_\Omega \frac{|\nabla v_{\varepsilon}|^4}{v_{\varepsilon}^3} \right)\nonumber\\
& +B \int_\Omega \frac{|\nabla v_{\varepsilon}|^4}{v_{\varepsilon}^3}+B \int_{\Omega} u_{\varepsilon} v_{\varepsilon} + \frac{4 b}{2-l}  \int_{\Omega} u^{2}_{\varepsilon} \nonumber\\
& + c |\Omega| \left\|v_0\right\|_{L^{\infty}(\Omega)}+\frac{4 b |\Omega|}{2-l}\nonumber\\
= &  B  \int_{\Omega} u_{\varepsilon}^{2}   \cdot G_{\varepsilon} (t)+B \int_\Omega \frac{|\nabla v_{\varepsilon}|^4}{v_{\varepsilon}^3}+B \int_{\Omega} u_{\varepsilon} v_{\varepsilon} \nonumber\\
&+ \frac{4 b}{2-l}  \int_{\Omega} u^{2}_{\varepsilon} + c |\Omega| \left\|v_0\right\|_{L^{\infty}(\Omega)}+\frac{4 b |\Omega|}{2-l},
\end{align}
where $B=\max \left\{c_1 b\left\|v_{0} \right\|_{L^{\infty}(\Omega)}+8 c_1 b \left\|v_{0} \right\|^4_{L^{\infty}(\Omega)}, c_1b m_*^3 \left\|v_{0} \right\|^2_{L^{\infty}(\Omega)}, c_1b \left\|v_{0} \right\|^2_{L^{\infty}(\Omega)} \right\}$. Set
\begin{align*}
Z_{1 \varepsilon}(t)= B\int_{\Omega} u_{\varepsilon}^{2}  \quad \text { for all } t \in(0, T) \text { and } \varepsilon \in(0,1) 
\end{align*}
and
\begin{align*}
M_{1\varepsilon}(t) = & B \int_\Omega \frac{|\nabla v_{\varepsilon}|^4}{v_{\varepsilon}^3} + B \int_{\Omega} u_{\varepsilon} v_{\varepsilon} + \frac{4 b}{2-l}  \int_{\Omega} u^{2}_{\varepsilon} + c |\Omega| \left\|v_0\right\|_{L^{\infty}(\Omega)}+\frac{4 b |\Omega|}{2-l}
\end{align*}
for all $t \in(0, T)$ and $\varepsilon \in(0,1)$. Then we rewrite \eqref{-3.33} as follows
\begin{align*}
G_{\varepsilon}^{\prime}(t) \leqslant  Z_{1\varepsilon}(t) G_{\varepsilon}(t)+M_{1\varepsilon}(t) \quad \text { for all } t \in(0, T) \text { and } \varepsilon \in(0,1). 
\end{align*}
Integrating this differential inequality gives
\begin{align}\label{-3.34}
G_{\varepsilon}(t)  & \leqslant G_{\varepsilon}(0) e^{\int_0^t Z_{1 \varepsilon}(s) \d s}+\int_0^t e^{\int_s^t Z_{1 \varepsilon}(\sigma) \d \sigma} M_{1 \varepsilon}(s) \d s 
\end{align}
for all $t \in(0, T)$ and $\varepsilon \in(0,1)$. Since 
\begin{align*}
\int_s^t Z_{1 \varepsilon}(\sigma) \d \sigma \leqslant  c_2(T) \quad \text { for all } t \in\left(0, T\right),~ s \in[0, t) \text {  and   } \varepsilon \in(0,1)
\end{align*}
by Lemma \ref{lem-1st-est1}, and 
\begin{align*}
\int_0^t M_{1 \varepsilon}(s) \d s \leqslant  c_3(T) \quad \text { for all } t \in\left(0, T\right) \text { and } \varepsilon \in(0,1)
\end{align*}
due to \eqref{-2.10}, \eqref{-3.5} and \eqref{-3.10}, from \eqref{-3.34} we conclude that 
\begin{align}\label{0908-1147}
G_{\varepsilon}(t) & \leqslant  \left\{\frac{4b}{(l-3)(l-2)} \int_{\Omega} (u_{0}+1)^{3-l}+\int_{\Omega} \frac{\left|\nabla v_{0}\right|^4}{v_{0}^3}\right\} \cdot e^{c_2(T)}
+c_3(T) e^{c_2(T)}
\end{align}
for all $t \in(0, T)$ and $\varepsilon \in(0,1)$, which establishes \eqref{-3.29}. Similarly, we can also obtain \eqref{-3.29} for the cases where $2 < l < 3$ or $l>3$.

Case II: $l = 2$. Plugging \eqref{0922-1640} into \eqref{0922-1559} and using $\xi \ln \xi +\frac{1}{e}\geqslant 0$ and $\ln \xi - \xi \leqslant 0$ for all $\xi>0$, thanks to \eqref{-2.9}, \eqref{-3.4} and Young’s inequality, for all $t \in(0, T)$ and $\varepsilon \in(0,1)$, we obtain 
\begin{align}\label{-3.33aa}
G_{\varepsilon}^{\prime}(t) + \frac{b}{2} \int_{\Omega} v_{\varepsilon}\left|\nabla u_{\varepsilon}\right|^2 + \int_{\Omega} u_{\varepsilon} v_{\varepsilon}^{-3}\left|\nabla v_{\varepsilon}\right|^4 \leqslant & B \int_{\Omega} u_{\varepsilon}^{2}   \cdot \int_\Omega \frac{|\nabla v_{\varepsilon}|^4}{v_{\varepsilon}^3} +B \int_\Omega \frac{|\nabla v_{\varepsilon}|^4}{v_{\varepsilon}^3}+B \int_{\Omega} u_{\varepsilon} v_{\varepsilon} \nonumber\\
& +\frac{4b(e+1)}{e} m_*   + c |\Omega| \left\|v_0\right\|_{L^{\infty}(\Omega)} \nonumber\\
\leqslant & B \int_{\Omega} u_{\varepsilon}^{2}   \cdot \left(4b \int_{\Omega} u_{\varepsilon} \ln u_{\varepsilon} +\int_\Omega \frac{|\nabla v_{\varepsilon}|^4}{v_{\varepsilon}^3}+\frac{4b|\Omega|}{e} \right)\nonumber\\
& +B \int_\Omega \frac{|\nabla v_{\varepsilon}|^4}{v_{\varepsilon}^3}+B \int_{\Omega} u_{\varepsilon} v_{\varepsilon}   +\frac{4b(e+1)}{e} m_*   + \nonumber\\
& c |\Omega| \left\|v_0\right\|_{L^{\infty}(\Omega)}\nonumber\\
= &  B  \int_{\Omega} u_{\varepsilon}^{2}   \cdot G_{\varepsilon} (t)+B \int_\Omega \frac{|\nabla v_{\varepsilon}|^4}{v_{\varepsilon}^3}+B \int_{\Omega} u_{\varepsilon} v_{\varepsilon} \nonumber\\
& +\frac{4b |\Omega|B}{e} \int_{\Omega} u^2_{\varepsilon} +\frac{4b(e+1)}{e} m_*   \nonumber\\
& + c |\Omega| \left\|v_0\right\|_{L^{\infty}(\Omega)}
\end{align}
We set
\begin{align*}
Z_{2 \varepsilon}(t)= B\int_{\Omega} u_{\varepsilon}^{2} \quad \text { for all } t \in\left(0, T\right) \text { and } \varepsilon \in(0,1)
\end{align*}
and
\begin{align*}
M_{2 \varepsilon}(t) = B \int_\Omega \frac{|\nabla v_{\varepsilon}|^4}{v_{\varepsilon}^3} + B\int_{\Omega} u_{\varepsilon} v_{\varepsilon}  +4b\left(1+ \frac{|\Omega|B}{e}\right) \int_{\Omega} u^2_{\varepsilon} +\frac{4b(e+1)}{e} m_* + c |\Omega| \left\|v_0\right\|_{L^{\infty}(\Omega)} 
\end{align*}
for all $t \in(0, T)$ and $\varepsilon \in(0,1)$. Then we rewrite \eqref{-3.33aa} as follows
\begin{align*}
G_{\varepsilon}^{\prime}(t) \leqslant  Z_{2 \varepsilon}(t) G_{\varepsilon}(t)+M_{2 \varepsilon}(t) \quad \text { for all } t \in(0, T) \text { and } \varepsilon \in(0,1). 
\end{align*}
Integrating this differential inequality gives
\begin{align}\label{-3.34}
G_{\varepsilon}(t)  & \leqslant G_{\varepsilon}(0) e^{\int_0^t Z_{2 \varepsilon}(s) \d s}+\int_0^t e^{\int_s^t Z_{2 \varepsilon}(\sigma) \d \sigma} M_{2 \varepsilon}(s) \d s
\end{align}
for all $t \in(0, T)$ and $\varepsilon \in(0,1)$. Since 
\begin{align*}
\int_s^t Z_{2 \varepsilon}(\sigma) \d \sigma \leqslant  c_4(T) \quad \text { for all } t \in\left(0, T\right),~ s \in[0, t) \text {  and   } \varepsilon \in(0,1)
\end{align*}
by Lemma \ref{lem-1st-est1}, and 
\begin{align*}
\int_0^t M_{2 \varepsilon}(s) \d s \leqslant  c_5(T) \quad \text { for all } t \in\left(0, T\right) \text { and } \varepsilon \in(0,1)
\end{align*}
due to \eqref{-2.10}, \eqref{-3.5} and \eqref{-3.10}, from \eqref{-3.34} we conclude that 
\begin{align}\label{0908-1147a}
G_{\varepsilon}(t) & \leqslant  \left\{4b \int_{\Omega} (u_{0}+1)\ln (u_{0}+1)+\int_{\Omega} \frac{\left|\nabla v_{0}\right|^4}{v_{0}^3}\right\} \cdot e^{c_4(T)}
+c_5(T) e^{c_4(T)}
\end{align}
for all $t \in(0, T)$ and $\varepsilon \in(0,1)$, which establishes \eqref{-3.29}. Similarly, we can also obtain \eqref{-3.29} for the case where $l = 3$.   
\end{proof}

In \cite[Lemma 3.1]{2024-AaA-LiWinkler}, Li and Winkler derived the estimate of the temporal-spatial integral of $u^2_{\varepsilon} \ln^2 (u_{\varepsilon}+e)$ for a doubly degenerate reaction-diffusion system with logistic source. Based on Lemma \ref{lem-1st-est2}, using the similar method, we also obtain such estimate for the system \eqref{sys-regul} with logistic source.

\begin{lem}\label{lem-3.5a}
Let $l\geqslant1$ and $T = \min \{\widetilde{T} , T_{\max,\varepsilon}\}$ for $\widetilde{T} \in \left(0, +\infty\right)$, and assume that \eqref{assIniVal} holds. Then there exists $C(T)>0$ such that
\begin{align}\label{0923-2002}
\int_0^T \int_{\Omega} u^2_{\varepsilon} \ln^2 (u_{\varepsilon}+e) \leqslant C(T)   \quad \text { for all } \varepsilon \in(0,1),
\end{align}
where $C(T)$ is a positive constant depending on $ \int_{\Omega} \ln u_0$, $\int_{\Omega} |\nabla v_{0 }|^2$, $\int_{\Omega} \frac{|\nabla v_{0 }|^2}{v_0}$ and $\int_{\Omega} u_{0}^{3-l}$ $\int_{\Omega} u_0$, but independent of $\varepsilon$. 
\end{lem}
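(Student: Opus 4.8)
The plan is to test the first equation in \eqref{sys-regul} against $\ln^2(u_\varepsilon+e)$, so that the dissipative part $-u_\varepsilon^2$ of the logistic term generates precisely the quantity \eqref{0923-2002}. Setting $\Psi(s)=\int_0^s\ln^2(\sigma+e)\,\d\sigma\geqslant0$ and integrating by parts, the genuine diffusion contribution becomes $-2\int_\Omega\frac{u_\varepsilon^{l-1}\ln(u_\varepsilon+e)}{u_\varepsilon+e}v_\varepsilon|\nabla u_\varepsilon|^2\leqslant0$, which may be discarded, so that
\begin{align*}
\dt\int_\Omega\Psi(u_\varepsilon)+\int_\Omega u_\varepsilon^2\ln^2(u_\varepsilon+e)\leqslant 2\int_\Omega\frac{u_\varepsilon^l\ln(u_\varepsilon+e)}{u_\varepsilon+e}\,v_\varepsilon\,\nabla u_\varepsilon\cdot\nabla v_\varepsilon+\int_\Omega u_\varepsilon\ln^2(u_\varepsilon+e)
\end{align*}
for all $t\in(0,T)$ and $\varepsilon\in(0,1)$.

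The heart of the matter is the cross term, and this is where Lemma \ref{lem-1st-est2} enters. Introducing the primitive $F$ determined by $F'(s)=\frac{s^l\ln(s+e)}{s+e}$ and $F(0)=0$ (so that $F\geqslant0$ and $F(s)$ grows like $s^l\ln s$), the cross term equals $2\int_\Omega v_\varepsilon\nabla F(u_\varepsilon)\cdot\nabla v_\varepsilon$. Integrating by parts, the boundary contribution vanishing thanks to $\frac{\partial v_\varepsilon}{\partial\nu}=0$, and then substituting $\Delta v_\varepsilon=v_{\varepsilon t}+u_\varepsilon v_\varepsilon$ from the second equation, I rewrite it as
\begin{align*}
-2\int_\Omega F(u_\varepsilon)|\nabla v_\varepsilon|^2-2\int_\Omega F(u_\varepsilon)u_\varepsilon v_\varepsilon^2-2\int_\Omega F(u_\varepsilon)v_\varepsilon v_{\varepsilon t}.
\end{align*}
The first two integrals are nonpositive and can be dropped, reducing everything to the single term $-2\int_\Omega F(u_\varepsilon)v_\varepsilon v_{\varepsilon t}$, in which the factor $v_{\varepsilon t}$ is exactly the quantity controlled in $L^2((0,T)\times\Omega)$ by \eqref{-3.7}.

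I expect the control of $-2\int_\Omega F(u_\varepsilon)v_\varepsilon v_{\varepsilon t}$ to be the main obstacle. Using $\|v_\varepsilon\|_{L^\infty(\Omega)}\leqslant\|v_0\|_{L^\infty(\Omega)}$ and Young's inequality in the form $2F(u_\varepsilon)v_\varepsilon|v_{\varepsilon t}|\leqslant\eta F(u_\varepsilon)^2v_\varepsilon^2+\eta^{-1}v_{\varepsilon t}^2$, the last summand is harmless after integration in time by virtue of \eqref{-3.7}. The difficulty is the first summand: since $F(s)^2$ behaves like $s^{2l}\ln^2 s$, it is majorised by a small multiple of $u_\varepsilon^2\ln^2(u_\varepsilon+e)$ only when $l=1$, in which case it is absorbed into the left-hand side directly. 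For general $l\geqslant1$ the logarithmically weighted power $\int_\Omega u_\varepsilon^{2l}\ln^2(u_\varepsilon+e)$ exceeds the available dissipation, and I would close the estimate by exploiting the two-dimensional geometry: a Gagliardo--Nirenberg interpolation (of the type underlying Lemma \ref{lemma-3.8}) bounds this quantity in terms of the weighted gradient $\int_\Omega v_\varepsilon|\nabla u_\varepsilon|^2$, which is integrable in time after Lemma \ref{lemma-3.8}, together with the mass \eqref{-3.4}, thereby allowing a small multiple of $\int_\Omega u_\varepsilon^2\ln^2(u_\varepsilon+e)$ to be split off and absorbed while the remainder stays integrable over $(0,T)$.

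Finally, the lower-order term is disposed of through $\int_\Omega u_\varepsilon\ln^2(u_\varepsilon+e)\leqslant\eta\int_\Omega u_\varepsilon^2\ln^2(u_\varepsilon+e)+C_\eta\int_\Omega\ln^2(u_\varepsilon+e)$ together with the elementary bound $\ln^2(s+e)\leqslant C(1+s)$ and \eqref{-3.4}, so that upon choosing all parameters $\eta$ small enough the terms proportional to $\int_\Omega u_\varepsilon^2\ln^2(u_\varepsilon+e)$ are absorbed on the left. Integrating the resulting differential inequality over $(0,T)$, discarding the nonnegative boundary value $\int_\Omega\Psi(u_\varepsilon(T))$, and observing that $\int_\Omega\Psi(u_{0\varepsilon})$ is finite and bounded independently of $\varepsilon\in(0,1)$ because $u_{0\varepsilon}=u_0+\varepsilon$ is bounded (as $u_0\in W^{1,\infty}(\Omega)$), I arrive at \eqref{0923-2002}.
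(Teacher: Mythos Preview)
Your strategy has a genuine gap for $l>1$ in the treatment of the cross term. After your integration by parts, the surviving contribution is $-2\int_\Omega F(u_\varepsilon)v_\varepsilon v_{\varepsilon t}$ with $F(s)\sim \frac{1}{l}s^{l}\ln s$, so Young's inequality produces $\eta\int_\Omega F(u_\varepsilon)^2 v_\varepsilon^2 \sim \eta\int_\Omega u_\varepsilon^{2l}\ln^2(u_\varepsilon+e)$. This is of strictly higher order than the absorbing term $\int_\Omega u_\varepsilon^{2}\ln^2(u_\varepsilon+e)$ whenever $l>1$, and no choice of $\eta$ repairs that mismatch. Your proposed Gagliardo--Nirenberg rescue cannot close the estimate either: at this stage the only bounds available are the mass \eqref{-3.4}, the space--time $L^2$ bound \eqref{-3.5}, and (implicitly from the proof of Lemma~\ref{lemma-3.8}) the time-integrated quantity $\int_0^T\!\int_\Omega v_\varepsilon|\nabla u_\varepsilon|^2$. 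Crucially, no lower bound on $v_\varepsilon$ is yet known---that is Lemma~\ref{lem-14.3}, which \emph{uses} the present lemma---so you cannot strip the weight. Even granting $\int_0^T\!\int_\Omega|\nabla u_\varepsilon|^2\leqslant C(T)$, the two-dimensional interpolation gives $\int_\Omega u_\varepsilon^{2l}\leqslant C\|\nabla u_\varepsilon\|_{L^2}^{2l-1}\|u_\varepsilon\|_{L^1}+C$, which is superlinear in the dissipation once $l>3/2$ and hence not time-integrable; there is no mechanism to ``split off a small multiple of $\int_\Omega u_\varepsilon^2\ln^2(u_\varepsilon+e)$'' from a larger power.

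The paper sidesteps this obstruction by a change of variable that removes the cross term outright. Writing the first equation as $u_{\varepsilon t}=\nabla\cdot\bigl(u_\varepsilon^{l-1}v_\varepsilon e^{v_\varepsilon}\nabla(u_\varepsilon e^{-v_\varepsilon})\bigr)+u_\varepsilon-u_\varepsilon^2$ and testing with $\Psi'(u_\varepsilon e^{-v_\varepsilon})=\ln^2(u_\varepsilon e^{-v_\varepsilon}+e)$ makes the entire flux contribution a single sign-definite term $-\int_\Omega u_\varepsilon^{l-1}v_\varepsilon e^{v_\varepsilon}\Psi''(u_\varepsilon e^{-v_\varepsilon})|\nabla(u_\varepsilon e^{-v_\varepsilon})|^2\leqslant 0$. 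The price is a commutator from the time derivative, $\int_\Omega\{\Psi(w)-w\Psi'(w)\}e^{v_\varepsilon}v_{\varepsilon t}$ with $w=u_\varepsilon e^{-v_\varepsilon}$, but here a cancellation occurs: although $\Psi(\xi)$ and $\xi\Psi'(\xi)$ both grow like $\xi\ln^2\xi$, their difference satisfies $\Psi(\xi)-\xi\Psi'(\xi)\leqslant e(\xi+e)\ln(\xi+e)$, i.e.\ only linear-log growth. After Young this gives $(u_\varepsilon+e)^2\ln^2(u_\varepsilon+e)$, which matches the logistic absorption exactly and can be handled using \eqref{-3.7}. This structural cancellation, absent from your direct approach, is what makes the argument independent of~$l$.
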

\begin{proof}
We set
\begin{align*}
\Psi(\xi)=(\xi+e) \ln ^2(\xi+e)-2(\xi+e) \ln (\xi+e)+2(\xi+e) \quad \text { for all } \xi \geqslant 0.
\end{align*}
We rewrite the first equation in \eqref{sys-regul} as 
\begin{align*}
u_{\varepsilon t} =\nabla \cdot\left\{u^{l-1}_{\varepsilon} v_{\varepsilon} e^{v_{\varepsilon}} \nabla\left(u_{\varepsilon} e^{-v_{\varepsilon}}\right)\right\}+u_{\varepsilon}-u_{\varepsilon}^2 \quad \text { for all } t \in\left(0, T\right) \text { and } \varepsilon \in(0,1).
\end{align*}
Multiplying the above equation by $\Psi^{\prime}\left(u_{\varepsilon} e^{-v_{\varepsilon}}\right)$, we have
\begin{align}\label{0923-1629}
\int_{\Omega} \Psi^{\prime}\left(u_{\varepsilon} e^{-v_{\varepsilon}}\right) u_{\varepsilon t}= & -\int_{\Omega} u^{l-1}_{\varepsilon} v_{\varepsilon} e^{v_{\varepsilon}} \Psi^{\prime \prime}\left(u_{\varepsilon} e^{-v_{\varepsilon}}\right)\left|\nabla\left(u_{\varepsilon} e^{-v_{\varepsilon}}\right)\right|^2 \nonumber\\
& +\int_{\Omega}\left(u_{\varepsilon}-u_{\varepsilon}^2\right) \ln ^2\left(u_{\varepsilon} e^{-v_{\varepsilon}}+e\right) 
\end{align}
for all $t \in(0, T)$ and $\varepsilon \in(0,1)$. Letting 
\begin{align}\label{0923-1626}
c_1 = \max \left\{e, e^{2\left\|v_0\right\|_{L^{\infty}(\Omega)}}\right\}.
\end{align}
Based on \eqref{-2.9} and \eqref{0923-1626}, we infer that
\begin{align}\label{0923-1627}
\int_{\left\{u_{\varepsilon} \leqslant c_1\right\}}  \left(u_{\varepsilon}-u_{\varepsilon}^2\right) \ln ^2\left(u_{\varepsilon} e^{-v_{\varepsilon}}+e\right)  & \leqslant \int_{\left\{u_{\varepsilon}  \leqslant  c_1 \right\}} u_{\varepsilon} \ln ^2\left(u_{\varepsilon} e^{-v_{\varepsilon}}+e\right) \nonumber\\
& \leqslant c_1 \ln ^2(c_1+e)|\Omega|
\end{align}
and
\begin{align}\label{0923-1628}
u_{\varepsilon} e^{-v_{\varepsilon}}+e & =\sqrt{u_{\varepsilon}} \cdot e^{\frac{1}{2} \ln u_{\varepsilon}-v_{\varepsilon}}+e \nonumber\\
& \geqslant \sqrt{u_{\varepsilon}} \cdot e^{\frac{1}{2} \ln c_1-\left\|v_0\right\|_{L^{\infty}(\Omega)}} \nonumber\\
& \geqslant \sqrt{u_{\varepsilon}} \quad \text { in }\left\{u_{\varepsilon}>c_1\right\}.
\end{align}
From \eqref{0923-1626}, we obtain $1 \leqslant \frac{1}{e} u_{\varepsilon}$ in $\left\{u_{\varepsilon}>c_1\right\}$, which together with \eqref{0923-1628} yields
\begin{align}\label{0923-1659}
& \int_{\left\{u_{\varepsilon}>c_1\right\}}\left(u_{\varepsilon}-u_{\varepsilon}^2\right) \ln ^2\left(u_{\varepsilon} e^{-v_{\varepsilon}}+e\right) \nonumber\\
= & -\int_{\left\{u_{\varepsilon}>c_1\right\}} u_{\varepsilon}\left(u_{\varepsilon}-1\right) \ln ^2\left(u_{\varepsilon} e^{-v_{\varepsilon}}+e\right) \nonumber\\
\leqslant &  -\left(1-\frac{1}{e}\right) \int_{\left\{u_{\varepsilon}>c_1\right\}} u_{\varepsilon}^2 \ln ^2 \sqrt{u_{\varepsilon}} \nonumber\\
=& - \frac{1-\frac{1}{e}}{4} \int_{\left\{u_{\varepsilon}>c_1\right\}} u_{\varepsilon}^2 \ln ^2 u_{\varepsilon} \nonumber\\
= & -\frac{1-\frac{1}{e}}{4} \int_{\left\{u_{\varepsilon}>c_1\right\}}\left(u_{\varepsilon}+e\right)^2 \ln ^2\left(u_{\varepsilon}+e\right) \cdot\left(\frac{u_{\varepsilon}}{u_{\varepsilon}+e}\right)^2\left(\frac{\ln u_{\varepsilon}}{\ln \left(u_{\varepsilon}+e\right)}\right)^2 \nonumber\\
\leqslant & -\frac{1-\frac{1}{e}}{4} \int_{\left\{u_{\varepsilon}>c_1\right\}}\left(u_{\varepsilon}+e\right)^2 \ln ^2\left(u_{\varepsilon}+e\right) \cdot\left(\frac{e}{e+e}\right)^2\left(\frac{\ln e}{\ln (e+e)}\right)^2 \nonumber\\
= & - c_2 \int_{\left\{u_{\varepsilon}>c_1\right\}}\left(u_{\varepsilon}+e\right)^2 \ln ^2\left(u_{\varepsilon}+e\right), 
\end{align}
where $c_2=\frac{1-\frac{1}{e}}{16(1+\ln 2)^2}$. Combining \eqref{0923-1628} with \eqref{0923-1659}, we see that
\begin{align}\label{0923-1922}
& c_2 \int_{\Omega}\left(u_{\varepsilon}+e\right)^2 \ln ^2\left(u_{\varepsilon}+e\right) \nonumber\\
= & c_2 \int_{\left\{u_{\varepsilon} \leqslant c_1\right\}}\left(u_{\varepsilon}+e\right)^2 \ln ^2\left(u_{\varepsilon}+e\right)+c_2  \int_{\left\{u_{\varepsilon}>c_1\right\}}\left(u_{\varepsilon}+e\right)^2 \ln ^2\left(u_{\varepsilon}+e\right) \nonumber\\
\leqslant &  c_3+c_2 \int_{\left\{u_{\varepsilon}>c_1\right\}}\left(u_{\varepsilon}+e\right)^2 \ln ^2\left(u_{\varepsilon}+e\right) \nonumber\\
\leqslant &  c_3-\int_{\left\{u_{\varepsilon}>c_1\right\}}\left(u_{\varepsilon}-u_{\varepsilon}^2\right) \ln ^2\left(u_{\varepsilon} e^{-v_{\varepsilon}}+e\right) \nonumber\\
= & c_3-\int_{\Omega}\left(u_{\varepsilon}-u_{\varepsilon}^2\right) \ln ^2\left(u_{\varepsilon} e^{-v_{\varepsilon}}+e\right)+\int_{\left\{u_{\varepsilon} \leqslant c_1\right\}}\left(u_{\varepsilon}-u_{\varepsilon}^2\right) \ln ^2\left(u_{\varepsilon} e^{-v_{\varepsilon}}+e\right) \nonumber\\
\leqslant &  c_3+c_1 \ln ^2(c_1+e)|\Omega|-\int_{\Omega}\left(u_{\varepsilon}-u_{\varepsilon}^2\right) \ln ^2\left(u_{\varepsilon} e^{-v_{\varepsilon}}+e\right),
\end{align}
where $c_3=\left(c_1+\right.$ $e)^2 \ln ^2\left(c_1+e\right) \cdot c_2|\Omega|$. By the left-hand side of \eqref{0923-1629}, we ensure that
\begin{align}\label{0923-1923}
& \int_{\Omega} \Psi^{\prime}\left(u_{\varepsilon} e^{-v_{\varepsilon}}\right) u_{\varepsilon t}\nonumber\\
= & \int_{\Omega} \Psi^{\prime}\left(u_{\varepsilon} e^{-v_{\varepsilon}}\right) \cdot\left(u_{\varepsilon} e^{-v_{\varepsilon}} e^{v_{\varepsilon}}\right)_t \nonumber\\
= & \int_{\Omega} e^{v_{\varepsilon}} \cdot\left(u_{\varepsilon} e^{-v_{\varepsilon}}\right)_t \cdot \Psi^{\prime}\left(u_{\varepsilon} e^{-v_{\varepsilon}}\right)+\int_{\Omega} u_{\varepsilon} e^{-v_{\varepsilon}} \Psi^{\prime}\left(u_{\varepsilon} e^{-v_{\varepsilon}}\right) \partial_t e^{v_{\varepsilon}} \nonumber\\
= & \int_{\Omega} e^{v_{\varepsilon}} \partial_t \Psi\left(u_{\varepsilon} e^{-v_{\varepsilon}}\right)+\int_{\Omega} u_{\varepsilon} e^{-v_{\varepsilon}} \Psi^{\prime}\left(u_{\varepsilon} e^{-v_{\varepsilon}}\right) \partial_t e^{v_{\varepsilon}} \nonumber\\
= & \frac{d}{d t} \int_{\Omega} e^{v_{\varepsilon}} \Psi\left(u_{\varepsilon} e^{-v_{\varepsilon}}\right)-\int_{\Omega} \partial_t e^{v_{\varepsilon}} \cdot \Psi\left(u_{\varepsilon} e^{-v_{\varepsilon}}\right) +\int_{\Omega} u_{\varepsilon} e^{-v_{\varepsilon}} \Psi^{\prime}\left(u_{\varepsilon} e^{-v_{\varepsilon}}\right) \partial_t e^{v_{\varepsilon}} \nonumber\\
= & \frac{d}{d t} \int_{\Omega} e^{v_{\varepsilon}} \Psi \left(u_{\varepsilon} e^{-v_{\varepsilon}}\right)+\int_{\Omega}\left\{u_{\varepsilon} e^{-v_{\varepsilon}} \Psi^{\prime}\left(u_{\varepsilon} e^{-v_{\varepsilon}}\right)-\Psi\left(u_{\varepsilon} e^{-v_{\varepsilon}}\right)\right\} \cdot e^{v_{\varepsilon}} v_{\varepsilon t}. 
\end{align}
Owing to $\ln (\xi+e) \leqslant \xi+e$ and $1 \leqslant \ln (\xi+e)$ for $\xi \geqslant 0$, we have
\begin{align}\label{0923-1858}
\Psi(\xi)-\xi \Psi^{\prime}(\xi) & = e \ln ^2(\xi+e)-2(\xi+e) \ln (\xi+e)+2(\xi+e) \nonumber\\
& \leqslant e (\xi+e) \ln (\xi+e)  \quad \text { for all } \xi \geqslant 0.
\end{align}
In view of \eqref{-2.9}, \eqref{0923-1858} and Young’s inequality, we can deduce that
\begin{align}\label{0923-1924}
\int_{\Omega}\left\{\Psi\left(u_{\varepsilon} e^{-v_{\varepsilon}}\right)-u_{\varepsilon} e^{-v_{\varepsilon}} \Psi^{\prime}\left(u_{\varepsilon} e^{-v_{\varepsilon}}\right)\right\} \cdot e^{v_{\varepsilon}} v_{\varepsilon t} 
\leqslant &  e^{1+\left\|v_0\right\|_{L^{\infty}(\Omega)}} \int_{\Omega}\left(u_{\varepsilon}+e\right) \ln \left(u_{\varepsilon}+e\right)\left|v_{\varepsilon t}\right| \nonumber\\
\leqslant &  \frac{c_2}{2} \int_{\Omega}\left(u_{\varepsilon}+e\right)^2 \ln ^2\left(u_{\varepsilon}+e\right)\nonumber\\
& +\frac{e^{2+2\left\|v_0\right\|_{L_{\infty}(\Omega)}}}{2 c_2} \int_{\Omega} v_{\varepsilon t}^2  
\end{align}
for all $t \in(0, T)$ and $\varepsilon \in(0,1)$. Summing up \eqref{0923-1922}, \eqref{0923-1923} and \eqref{0923-1924}, we have
\begin{align*}
\frac{d}{d t} \int_{\Omega} e^{v_{\varepsilon}} \Psi\left(u_{\varepsilon} e^{-v_{\varepsilon}}\right)
\leqslant & \frac{c_2}{2} \int_{\Omega}\left(u_{\varepsilon}+e\right)^2 \ln ^2\left(u_{\varepsilon}+e\right)+\frac{e^{2+2\left\|v_0\right\|_{L_{\infty}(\Omega)}}}{2 c_2} \int_{\Omega} v_{\varepsilon t}^2 +c_3+c_1 \ln ^2(c_1+e)|\Omega|\\
& -c_2 \int_{\Omega}\left(u_{\varepsilon}+e\right)^2 \ln ^2\left(u_{\varepsilon}+e\right) \quad \text { for all } t \in\left(0, T_{\max , \varepsilon}\right)
\end{align*}
so that
\begin{align*}
\dt \int_{\Omega} e^{v_{\varepsilon}} \Psi\left(u_{\varepsilon} e^{-v_{\varepsilon}}\right)+\frac{c_2}{2} \int_{\Omega}\left(u_{\varepsilon}+e\right)^2 \ln ^2\left(u_{\varepsilon}+e\right)
\leqslant &  \frac{e^{2+2\left\|v_0\right\|_{L_{\infty}(\Omega)}}}{2 c_2} \int_{\Omega} v_{\varepsilon t}^2 +c_1 \ln ^2(c_1+e)|\Omega| +c_3
\end{align*}
for all $t \in(0, T)$ and $\varepsilon \in(0,1)$. Since $\Psi^{\prime}(\xi)\geqslant 0$ for all $\xi \geqslant 0$ implies that $\Psi\left(\left(u_0+\varepsilon\right) e^{-v_0}\right) \leqslant \Psi\left(u_0+1\right)$ and
\begin{align*}
\Psi\left(u_{\varepsilon} e^{-v_{\varepsilon}}\right) \geqslant \Psi(0)=e>0, 
\end{align*}
upon an integration we infer that
\begin{align*}
\frac{c_2}{2} \int_0^T \int_{\Omega}\left(u_{\varepsilon}+e\right)^2 \ln ^2\left(u_{\varepsilon}+e\right) \leqslant & \int_{\Omega} e^{v_0} \Psi\left(u_0+1\right)+\left(c_3+c_1 \ln ^2(c_1+e)|\Omega|\right) T \\
& +\frac{e^{2+2\left\|v_0\right\|_{L_{\infty}(\Omega)}}}{2 c_2} \int_0^T \int_{\Omega} v_{\varepsilon t}^2 \quad \text { for all } \varepsilon \in(0,1),
\end{align*}
thus \eqref{0923-2002} results from \eqref{-3.7}.
\end{proof}

Using Lemma \ref{lem-3.5a} and \cite[Corollary 1.3]{2022-IMRN-Winkler}, we will derive the estimate for $\frac{1}{v_{\varepsilon}} \in L_{\mathrm{loc}}^{\infty}(\bar{\Omega} \times[0, \infty))$ independent of $\varepsilon$.
\begin{lem}\label{lem-14.3} 
Let $l\geqslant1$ and $T = \min \{\widetilde{T} , T_{\max,\varepsilon}\}$ for $\widetilde{T} \in \left(0, +\infty\right)$, and assume that \eqref{assIniVal} holds. Then there exists $C(T)>0$ such that
\begin{align}\label{3.6-1}
\left\|\frac{1}{v_{\varepsilon}(t)}\right\|_{L^{\infty}(\Omega)} \leqslant C(T) \quad \text { for all } t \in(0, T) \text { and } \varepsilon \in(0,1).
\end{align}
where $C(T)$ is a positive constant depending on $ \int_{\Omega} \ln u_0$, $\int_{\Omega} |\nabla v_{0 }|^2$, $\int_{\Omega} \frac{|\nabla v_{0 }|^2}{v_0}$ and $\int_{\Omega} u_{0}^{3-l}$ $\int_{\Omega} u_0$, but independent of $\varepsilon$. 
\end{lem}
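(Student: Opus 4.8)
The plan is to read the second equation of \eqref{sys-regul},
$$v_{\varepsilon t}=\Delta v_{\varepsilon}-u_{\varepsilon}v_{\varepsilon},\qquad \partial_\nu v_\varepsilon=0,\qquad v_\varepsilon(\cdot,0)=v_0,$$
as a linear consumption problem in which the nonnegative function $u_\varepsilon$ plays the role of a space–time dependent absorption rate, and then to invoke the pointwise lower bound result \cite[Corollary 1.3]{2022-IMRN-Winkler}. That result asserts that, in a bounded planar domain with homogeneous Neumann data, the solution of such a consumption problem cannot be driven to zero on any finite time interval provided the initial datum is bounded below by a positive constant and the absorption rate enjoys a suitable space–time integrability of $L\log L$–type; moreover the resulting lower bound is quantitative, depending only on $\Omega$, on $\|v_0\|_{L^\infty(\Omega)}$ and $\min_{\overline\Omega}v_0$, on $T$, and on the size of the integrability bound for $u_\varepsilon$.

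First I would record that the hypotheses on the datum hold uniformly in $\varepsilon$: since $v_0\in W^{1,\infty}(\Omega)$ with $v_0>0$ on the compact set $\overline\Omega$ and $v_{0\varepsilon}\equiv v_0$, the number $\delta_0:=\min_{\overline\Omega}v_0>0$ is independent of $\varepsilon$, while \eqref{-2.9} furnishes the matching upper bound $\|v_\varepsilon\|_{L^\infty(\Omega)}\le\|v_0\|_{L^\infty(\Omega)}$. Next I would supply the required control on the absorption rate: the estimate
$$\int_0^T\!\!\int_\Omega u_\varepsilon^2\ln^2(u_\varepsilon+e)\le C(T)$$
of Lemma \ref{lem-3.5a}, together with the mass bound $\int_\Omega u_\varepsilon\le m_*$ of Lemma \ref{lem-1st-est}, is exactly the $\varepsilon$–independent space–time $L\log L$–type information that the corollary demands, with a constant depending only on the quantities listed in Lemma \ref{lem-3.5a}.

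With these inputs verified, \cite[Corollary 1.3]{2022-IMRN-Winkler} produces a constant $\eta(T)>0$, independent of $\varepsilon$, such that $v_\varepsilon(x,t)\ge\eta(T)$ for all $x\in\overline\Omega$, $t\in(0,T)$ and $\varepsilon\in(0,1)$. Passing to reciprocals yields $\|1/v_\varepsilon(t)\|_{L^\infty(\Omega)}\le\eta(T)^{-1}=:C(T)$, which is precisely \eqref{3.6-1}.

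The one point requiring care is to ensure that the functional controlled in Lemma \ref{lem-3.5a} is exactly the one entering the hypothesis of the corollary, and that every constant produced along the way is traced back to $\varepsilon$–independent data. If the corollary were formulated in terms of a slightly different norm — for instance a bound for $\sup_{t<T}\int_\Omega u_\varepsilon\ln(u_\varepsilon+e)$, or for $u_\varepsilon$ in $L^1((0,T);L\log L)$ — I would first derive that intermediate bound from the present $L^2\ln^2$ estimate by Hölder's and Young's inequalities before applying the result. The mechanism behind the corollary, which one could also reproduce directly, is that testing the $v_\varepsilon$–equation with $v_\varepsilon^{-p-1}$ gives $\tfrac{d}{dt}\int_\Omega v_\varepsilon^{-p}+\tfrac{4(p+1)}{p}\int_\Omega|\nabla v_\varepsilon^{-p/2}|^2\le p\int_\Omega u_\varepsilon v_\varepsilon^{-p}$, after which the two–dimensional Gagliardo–Nirenberg and Moser–Trudinger inequalities allow the depletion term on the right to be absorbed using the $L\log L$ control on $u_\varepsilon$, uniformly in $p$; letting $p\to\infty$ then upgrades the resulting uniform $L^p$ bounds on $v_\varepsilon^{-1}$ to the pointwise estimate \eqref{3.6-1}. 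This final absorption, uniform in both $p$ and $\varepsilon$, is the genuinely delicate step, and it is the reason the sharp $L\log L$ information of Lemma \ref{lem-3.5a} is needed rather than the weaker bounds already available for $v_\varepsilon$.
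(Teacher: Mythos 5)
Your overall strategy coincides with the paper's: both proofs combine the $\varepsilon$--independent bound $\int_0^T\!\int_\Omega u_\varepsilon^2\ln^2(u_\varepsilon+e)\leqslant C(T)$ of Lemma \ref{lem-3.5a} with \cite[Corollary 1.3]{2022-IMRN-Winkler} to produce a uniform positive lower bound for $v_\varepsilon$, hence \eqref{3.6-1}. However, there is a genuine gap in how you interface with the cited result. Corollary 1.3 of \cite{2022-IMRN-Winkler} is not a statement about consumption problems $v_t=\Delta v-uv$; it is an $L^\infty$ bound for solutions of the \emph{inhomogeneous Neumann heat equation} $\overline w_t=\Delta\overline w+f$ when the source $f$ lies in the space--time Orlicz class quantified by $\int_0^T\!\int_\Omega f^2\ln^2(f+e)$. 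You cannot apply it to $v_\varepsilon$ directly, as your second and third paragraphs do, and your hedge about the corollary possibly being ``formulated in terms of a slightly different norm'' does not repair this, because the discrepancy is in the equation the corollary addresses, not in the norm placed on $u_\varepsilon$.

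The missing bridge --- which is in fact the entire content of the paper's proof --- is a logarithmic substitution plus a comparison argument: set $w_\varepsilon=-\ln\bigl(v_\varepsilon/\|v_0\|_{L^\infty(\Omega)}\bigr)$, so that
\begin{align*}
w_{\varepsilon t}=\Delta w_\varepsilon-|\nabla w_\varepsilon|^2+u_\varepsilon\leqslant \Delta w_\varepsilon+u_\varepsilon,
\qquad \frac{\partial w_\varepsilon}{\partial\nu}=0,
\end{align*}
with bounded initial data $w_\varepsilon(\cdot,0)=\ln\bigl(\|v_0\|_{L^\infty(\Omega)}/v_0\bigr)$ (here the hypothesis $v_0>0$ on $\overline\Omega$ enters, exactly as you noted). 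By the comparison principle, $w_\varepsilon\leqslant\overline{w_\varepsilon}$, where $\overline{w_\varepsilon}$ solves the Neumann heat problem with source $u_\varepsilon$ and the same initial data; it is to $\overline{w_\varepsilon}$ that \cite[Corollary 1.3]{2022-IMRN-Winkler} applies, via Lemma \ref{lem-3.5a}, giving $\overline{w_\varepsilon}\leqslant c(T)$ and hence $v_\varepsilon\geqslant\|v_0\|_{L^\infty(\Omega)}e^{-c(T)}$. Your fallback sketch (testing with $v_\varepsilon^{-p-1}$, Moser--Trudinger, and a Moser iteration in $p$) is a conceivable alternative route, but you concede yourself that its crucial uniform-in-$p$ absorption step is delicate and you do not carry it out, so it cannot substitute for the missing argument. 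In short: right ingredients, right conclusion, but the step that actually constitutes the proof is delegated to a citation that does not say what you claim it says.
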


\begin{proof}
Let
\begin{align*}
w_{\varepsilon}(x, t)=-\ln \frac{v_{\varepsilon}(x, t)}{\left\|v_0\right\|_{L^{\infty}(\Omega)}},
\end{align*}
then the second equation of system \eqref{sys-regul} with its corresponding boundary condition becomes
\begin{align*}
\begin{cases}w_{\varepsilon t}= \Delta w_{\varepsilon}-|\nabla w_{\varepsilon}|^2+ u_{\varepsilon} & x \in \Omega, t \in\left(0, T\right), \\ \frac{\partial w_{\varepsilon}}{\partial \nu}=0, & x \in \partial \Omega, t \in\left(0, T\right), \\ w_{\varepsilon}(x, 0)=-\ln \frac{v_0(x)}{\left\|v_0\right\|_{L^{\infty}(\Omega)}}, & x \in \Omega .\end{cases}
\end{align*}
In view of
\begin{align*}
w_{\varepsilon t}=\Delta w_{\varepsilon}-|\nabla w_{\varepsilon}|^2+ u_{\varepsilon} \leqslant \Delta w_{\varepsilon}+ u_{\varepsilon}, 
\end{align*}
and the comparison principle, we have
\begin{align}\label{0923-2252}
w_{\varepsilon} \leqslant \overline{w_{\varepsilon}} \quad \text { in } \Omega \times\left(0, T\right),
\end{align}
where $\overline{w_{\varepsilon}} \in \bigcap_{q>2} C^0\left(\left[0, T\right) ; W^{1, q}(\Omega)\right) \cap C^{2,1}\left(\overline{\Omega} \times\left(0, T \right)\right)$ denotes the classical solution of
\begin{align}
\begin{cases}
\overline{w_{\varepsilon}}=\Delta \overline{w_{\varepsilon}}+u_{\varepsilon}(x, t), & x \in \Omega, t \in\left(0, T\right), \\ \frac{\partial w_{\varepsilon}}{\partial \nu}=0, & x \in \partial \Omega, t \in\left(0, T\right), \\ w_{\varepsilon}(x, 0)=\ln \frac{\left\|v_0\right\|_{L^{\infty}(\Omega)}}{v_0(x)}, & x \in \Omega,
\end{cases}
\end{align}
which exists throughout $\Omega \times\left(0, T\right)$ due to \eqref{-2.6} and the H\"{o}lder regularity theory for parabolic equations (cf. \cite{1993-JDE-PorzioVespri}). 
Based on \cite[Corollary 1.3]{2022-IMRN-Winkler} and Lemma \ref{lem-3.5a}, we can find $c(T)>0$ such that 
\begin{align*}
\overline{w_{\varepsilon}}(x, t) \leqslant c(T) \quad \text { for all } x \in \Omega, ~~ t \in(0, T) \text { and } \varepsilon \in(0,1).
\end{align*}
Combining this with \eqref{0923-2252}, we establish \eqref{3.6-1}.
\end{proof}

In view of \eqref{-3.29} and \eqref{3.6-1}, we can derive the following lemma.

\begin{lem}\label{lemma-3.9x}
Let $l\geqslant1$ and $T = \min \{\widetilde{T}, T_{\max,\varepsilon}\}$ for $\widetilde{T} \in \left(0, +\infty\right)$, and assume that \eqref{assIniVal} holds. Then for all $p >1$, there exists $C(p,T)>0$ such that
\begin{align}\label{-3.36}
\int_{\Omega} u_{\varepsilon}^p(t) \leqslant C(p,T) \quad \text { for all } t \in(0, T) \text { and } \varepsilon \in(0,1)
\end{align}
and
\begin{align}\label{-3.37}
\int_0^{T} \int_{\Omega} u_{\varepsilon}^{p+l-3} \left|\nabla u_{\varepsilon}\right|^2 \leqslant  C(p,T) \quad \text { for all }  \varepsilon \in(0,1),
\end{align}
where $C(p,T)$ is a positive constant depending on $ \int_{\Omega} \ln u_0$, $\int_{\Omega} |\nabla v_{0 }|^2$, $\int_{\Omega} \frac{|\nabla v_{0 }|^2}{v_0}$ and $\int_{\Omega} u_{0}^{3-l}$ $\int_{\Omega} u_0$, but independent of $\varepsilon$. . 
\end{lem}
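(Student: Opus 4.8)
The plan is to turn the differential inequality \eqref{0704-0024} of Lemma~\ref{lemma-3.9xx} into a closed estimate for $y_\varepsilon(t):=\int_\Omega u_\varepsilon^p$ by exploiting the two bounds that were engineered precisely for this purpose, namely $\int_\Omega \frac{|\nabla v_\varepsilon|^4}{v_\varepsilon^3}\le C(T)$ from \eqref{-3.29} and $\|1/v_\varepsilon(t)\|_{L^\infty(\Omega)}\le C(T)$ from \eqref{3.6-1}, together with $\|v_\varepsilon\|_{L^\infty(\Omega)}\le\|v_0\|_{L^\infty(\Omega)}$ from \eqref{-2.9}. First I would fix $p\ge 2$ and insert these three bounds into the right-hand side of \eqref{0704-0024}; this renders every factor carrying $v_\varepsilon$ a harmless constant $C(T)$ and reduces \eqref{0704-0024} to
\begin{align*}
\dt \int_\Omega u_\varepsilon^p + c_p\int_\Omega \Big|\nabla\big(u_\varepsilon^{\frac{l+p-1}{2}}v_\varepsilon^{\frac12}\big)\Big|^2 \le C(T)\Big(\int_\Omega u_\varepsilon^{2(l+p-1)}v_\varepsilon^{2}\Big)^{\frac12} + p\int_\Omega u_\varepsilon^{p} - p\int_\Omega u_\varepsilon^{p+1},
\end{align*}
with $c_p=\frac{p(p-1)}{(l+p-1)^2}$.

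The core of the argument is to absorb the gradient-production term on the right into the dissipation on the left. Writing $w_\varepsilon:=u_\varepsilon^{\frac{l+p-1}{2}}v_\varepsilon^{\frac12}$, I observe that $\big(\int_\Omega u_\varepsilon^{2(l+p-1)}v_\varepsilon^2\big)^{1/2}=\|w_\varepsilon\|_{L^4(\Omega)}^2$, so a two-dimensional Gagliardo--Nirenberg inequality applied to $w_\varepsilon$, followed by Young's inequality, gives $\|w_\varepsilon\|_{L^4}^2\le \delta\|\nabla w_\varepsilon\|_{L^2}^2 + C_\delta\|w_\varepsilon\|_{L^{s}}^{2}$ for a low exponent $s$ at my disposal. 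Choosing $\delta$ small absorbs the gradient into $c_p\|\nabla w_\varepsilon\|_{L^2}^2$, and since $1/v_\varepsilon$ and $v_\varepsilon$ are both bounded, the leftover lower-order term $\|w_\varepsilon\|_{L^s}^2$ is comparable to $\int_\Omega u_\varepsilon^{\sigma}$ with an exponent $\sigma$ that I can tune through $s$ (subject to the admissibility constraint $s\ge 1$).

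Closing the resulting differential inequality is where the real work lies, and the difficulty is dictated by $l$. The clean idea is to interpolate against $\int_\Omega u_\varepsilon^{p_0}$ for an exponent $p_0<p$ that is already known to be bounded uniformly in $t$; the leftover term then becomes a controlled power of $\int_\Omega u_\varepsilon^{p_0}$ times a time-integrable weight, and the favourable logistic term $-p\int_\Omega u_\varepsilon^{p+1}$ controls both the linear term $p\int_\Omega u_\varepsilon^p$ and this leftover, or else pairs with a Grönwall inequality $y_\varepsilon'\le Z_\varepsilon(t)\,y_\varepsilon + M_\varepsilon(t)$ with $\int_0^T Z_\varepsilon,\ \int_0^T M_\varepsilon\le C(T)$, exactly in the spirit of the proof of Lemma~\ref{lemma-3.8}; integrating this yields \eqref{-3.36}. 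To seed the scheme I would run a finite induction on $p$, starting from the pointwise mass bound \eqref{-3.4}, where the integrable weight $Z_\varepsilon(t)$ is built from $\int_\Omega u_\varepsilon^2$, whose time integral is controlled by the spacetime bound \eqref{-3.5} and, for larger $l$, by the sharper spacetime bound $\int_0^T\int_\Omega u_\varepsilon^2\ln^2(u_\varepsilon+e)\le C(T)$ of Lemma~\ref{lem-3.5a}; each inductive step raises the admissible exponent by a fixed amount determined by $s\ge 1$, so finitely many steps reach any prescribed $p$. I expect the exponent bookkeeping to be the main obstacle: for $l\ge 2$ the Gagliardo--Nirenberg/Young procedure produces a power of $\int_\Omega u_\varepsilon^p$ that matches or exceeds the super-quadratic exponent supplied by the logistic absorption, so a one-shot ODE comparison fails and one is forced to rely on the integrable-coefficient Grönwall structure fed by Lemma~\ref{lem-3.5a} and the mass bound \eqref{-3.4}.

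Finally, once \eqref{-3.36} is available, I would integrate the reduced differential inequality over $(0,T)$: the pointwise control of $\int_\Omega u_\varepsilon^p$ handles the boundary contribution at $t=T$ and the logistic terms, leaving a uniform bound on $\int_0^T\int_\Omega v_\varepsilon|\nabla u_\varepsilon^{(l+p-1)/2}|^2$. Since $v_\varepsilon\ge 1/C(T)$ by \eqref{3.6-1}, this is equivalent, up to the factor $\tfrac{4}{(l+p-1)^2}$, to the claimed spacetime estimate \eqref{-3.37}. The remaining range $1<p<2$ then follows from the case $p=2$ through Hölder's inequality, $\int_\Omega u_\varepsilon^p\le|\Omega|^{1-p/2}\big(\int_\Omega u_\varepsilon^2\big)^{p/2}$, together with a parallel interpolation for the gradient integral.
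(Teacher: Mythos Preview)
Your strategy is correct in its skeleton---plug \eqref{-3.29} and \eqref{3.6-1} into \eqref{0704-0024}, then use Gagliardo--Nirenberg on $w_\varepsilon=u_\varepsilon^{(l+p-1)/2}v_\varepsilon^{1/2}$ to absorb $\|w_\varepsilon\|_{L^4}^2$ into the dissipation---and the derivation of \eqref{-3.37} from \eqref{-3.36} is fine. However, the induction/Gr\"onwall machinery you propose for closing the ODE, and the anticipated ``exponent bookkeeping obstacle'' for $l\ge 2$, are unnecessary and stem from your self-imposed restriction $s\ge 1$ on the low exponent in Gagliardo--Nirenberg.

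The paper's proof dispenses with all of this by choosing the low exponent to be $s=\tfrac{2}{l+p-1}$ (which is $\le 1$ for $p\ge 2$, but the interpolation inequality is still valid). With this choice one has
\[
\|w_\varepsilon\|_{L^{2/(l+p-1)}}^{2}=\Big(\int_\Omega u_\varepsilon\, v_\varepsilon^{1/(l+p-1)}\Big)^{l+p-1}\le \|v_0\|_{L^\infty(\Omega)}\, m_*^{\,l+p-1},
\]
a pure constant by \eqref{-2.9} and \eqref{-3.4}. The Gagliardo--Nirenberg exponent is $k=\tfrac{2(l+p)-3}{2(l+p)-2}<1$, so Young's inequality absorbs the gradient term and the differential inequality collapses to $\tfrac{d}{dt}\int_\Omega u_\varepsilon^p\le c_6$ with a constant right-hand side. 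Integration gives \eqref{-3.36} for $p\ge 2$ in one shot; no induction, no Gr\"onwall, and Lemma~\ref{lem-3.5a} is not needed here at all. Your approach would eventually work, but only after replacing the constraint $s\ge 1$ by the correct observation that interpolating directly against $L^1$ is admissible and turns the leftover term into a constant.
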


\begin{proof}
We employ \eqref{-3.29} and \eqref{3.6-1} to fix $c_1(T) > 0$ and $c_2(T) > 0$ such that
\begin{align*}
\int_{\Omega} \frac{|\nabla v_{\varepsilon}|^4}{v_{\varepsilon}^3} \leqslant c_1(T) \quad \text{ and } \quad \left\|\frac{1}{v_{\varepsilon}(t)}\right\|_{L^{\infty}(\Omega)} \leqslant c_2(T) \quad \text { for all } t \in(0, T) \text { and } \varepsilon \in(0,1).
\end{align*}
Plugging this into \eqref{0704-0024}, by Young's inequality, we obtain 
\begin{align}\label{0924-1213}
\dt \int_{\Omega} u_{\varepsilon}^p 
+\frac{p(p-1)}{(l+p-1)^2} \int_{\Omega}\left|\nabla\left(u_{\varepsilon}^{\frac{l+p-1}{2}} v_{\varepsilon}^{\frac{1}{2}}\right)\right|^2 
\leqslant &  A c^\frac{1}{2}_1 c^\frac{1}{2}_2 \left\{\int_\Omega u_{\varepsilon}^{2(l+p-1)}v_{\varepsilon}^2\right\}^{\frac{1}{2}} \nonumber\\
& +  A c^\frac{1}{2}_1 \left\{\int_\Omega u_{\varepsilon}^{2(l+p-1)}v_{\varepsilon}^2\right\}^{\frac{1}{2}} + p |\Omega|  \nonumber\\
= &  c_3 \left\{\int_\Omega u_{\varepsilon}^{2(l+p-1)}v_{\varepsilon}^2\right\}^{\frac{1}{2}} + p |\Omega|
\end{align}
for all $t \in(0, T)$ and $\varepsilon \in(0,1)$, where $c_3 = A c^\frac{1}{2}_1( c^\frac{1}{2}_2+1)$. By Gagliardo-Nirenberg inequality and Young's inequality, thanks to \eqref{-3.4}, we find $c_4>0$ and $c_5>0$ such that
\begin{align*}
& c_3 \left\|u_{\varepsilon}^{\frac{l+p-1}{2}} v_{\varepsilon}^{\frac{1}{2}}\right\|_{L^{4}(\Omega)}^2 \nonumber\\
\leqslant & c_3 c_4\left\|\nabla\left(u_{\varepsilon}^{\frac{l+p-1}{2}} v_{\varepsilon}^{\frac{1}{2}}\right)\right\|_{L^2(\Omega)}^{2k}\left\|u_{\varepsilon}^{\frac{l+p-1}{2}} v_{\varepsilon}^{\frac{1}{2}}\right\|_{L^{\frac{2}{l+p-1}}(\Omega)}^{2-2k}+c_3 c_4\left\|u_{\varepsilon}^{\frac{l+p-1}{2}} v_{\varepsilon}^{\frac{1}{2}}\right\|_{L^{\frac{2}{l+p-1}}(\Omega)}^{2} \nonumber\\
\leqslant & c_3 c_4 \left\|v_0\right\|^{1-k}_{L^{\infty}(\Omega)} m_*^{(l+p-1)(1-k)} \left\|\nabla\left(u_{\varepsilon}^{\frac{l+p-1}{2}} v_{\varepsilon}^{\frac{1}{2}}\right)\right\|_{L^2(\Omega)}^{2k}+c_3 c_4 \left\|v_0\right\|_{L^{\infty}(\Omega)} m_*^{l+p-1}\nonumber\\
\leqslant & \frac{p(p-1)}{(l+p-1)^2} \left\|\nabla\left(u_{\varepsilon}^{\frac{l+p-1}{2}} v_{\varepsilon}^{\frac{1}{2}}\right)\right\|_{L^2(\Omega)}^{2}+c_5,
\end{align*}
where $k= \frac{2(l+p)-3}{2(l+p)-2} \in (0,1)$. This
together with \eqref{0924-1213} yields
\begin{align}\label{0929-2129}
\dt \int_{\Omega} u_{\varepsilon}^p 
+\frac{p(p-1)}{(l+p-1)^2} \int_{\Omega}\left|\nabla\left(u_{\varepsilon}^{\frac{l+p-1}{2}} v_{\varepsilon}^{\frac{1}{2}}\right)\right|^2 \leqslant c_6 
\end{align}
for all $t \in(0, T)$ and $\varepsilon \in(0,1)$, where $c_6=c_5+p |\Omega|$, which establishes \eqref{-3.36}. By a direct integration in \eqref{0929-2129}, we conclude that
\begin{align*}
\int_0^{T} \int_{\Omega} \left|\nabla\left(u_{\varepsilon}^{\frac{l+p-1}{2}} v_{\varepsilon}^{\frac{1}{2}}\right)\right|^2 \leqslant  C(p,T)\quad \text { for all }  \varepsilon \in(0,1).
\end{align*}
Since \eqref{3.6-1} and
$\int_{\Omega} u_{\varepsilon}^{l+p-3} v_{\varepsilon}|\nabla u_{\varepsilon}|^2 
= \frac{4}{(l+p-1)^2} \int_{\Omega}|v_{\varepsilon}^{\frac{1}{2}} \nabla u_{\varepsilon}^{\frac{l+p-1}{2}}|^2$, we obtain
\begin{align*}
\int_0^{T} \int_{\Omega} u_{\varepsilon}^{p+l-3} \left|\nabla u_{\varepsilon}\right|^2 \leqslant  C(p,T)\quad \text { for all }  \varepsilon \in(0,1).
\end{align*}
This completes the proof of \eqref{-3.37}.
\end{proof}

\section{Global existence of weak solutions}\label{section4}
On the basis of standard heat semigroup estimates we can obtain $L^{\infty}$ bounds for $\nabla v_{\varepsilon}$.

\begin{lem}\label{lemma-4.1}
Let $l\geqslant1$ and $T = \min \{\widetilde{T} , T_{\max,\varepsilon}\}$ for $\widetilde{T} \in \left(0, +\infty\right)$, and assume that \eqref{assIniVal} holds. Then there exists $C(T)>0$ such that
\begin{align}\label{-4.1}
\left\|v_{\varepsilon}(t)\right\|_{W^{1, \infty}(\Omega)} \leqslant C(T)\quad \text { for all } t \in(0, T) \text { and } \varepsilon \in(0,1),
\end{align}
where $C(T)$ is a positive constant independent of $\varepsilon$.
\end{lem}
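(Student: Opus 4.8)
The plan is to represent $v_\varepsilon$ through the variation-of-constants formula associated with the Neumann heat semigroup $(e^{t\Delta})_{t\geqslant0}$ on $\Omega$, namely
\[
v_\varepsilon(\cdot,t)=e^{t\Delta}v_0-\int_0^t e^{(t-s)\Delta}\bigl(u_\varepsilon(\cdot,s)\,v_\varepsilon(\cdot,s)\bigr)\,\mathrm{d}s,\qquad t\in(0,T),
\]
and then to bound $\nabla v_\varepsilon$ in $L^\infty(\Omega)$ by differentiating this identity and invoking the standard smoothing estimates for the semigroup. Since the $L^\infty(\Omega)$ part of the $W^{1,\infty}(\Omega)$ norm is already supplied by \eqref{-2.9}, the whole task reduces to a uniform-in-$\varepsilon$ bound for $\|\nabla v_\varepsilon(t)\|_{L^\infty(\Omega)}$.

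For the gradient I would use the known estimate that, in the planar case $n=2$, there exist $c_1>0$ and $\lambda>0$ such that for every $q>1$,
\[
\bigl\|\nabla e^{t\Delta}z\bigr\|_{L^\infty(\Omega)}\leqslant c_1\bigl(1+t^{-\frac12-\frac1q}\bigr)e^{-\lambda t}\,\|z\|_{L^q(\Omega)},\qquad t>0,
\]
together with the bound $\|\nabla e^{t\Delta}v_0\|_{L^\infty(\Omega)}\leqslant c_2\|\nabla v_0\|_{L^\infty(\Omega)}$ stemming from $v_0\in W^{1,\infty}(\Omega)$. Applying both to the Duhamel formula yields
\[
\|\nabla v_\varepsilon(t)\|_{L^\infty(\Omega)}\leqslant c_2\|\nabla v_0\|_{L^\infty(\Omega)}+c_1\int_0^t\bigl(1+(t-s)^{-\frac12-\frac1q}\bigr)e^{-\lambda(t-s)}\,\|u_\varepsilon(s)v_\varepsilon(s)\|_{L^q(\Omega)}\,\mathrm{d}s.
\]

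The decisive input is to control the forcing term $u_\varepsilon v_\varepsilon$ in a space $L^q(\Omega)$ with $q>n=2$. Here \eqref{-2.9} gives $\|u_\varepsilon(s)v_\varepsilon(s)\|_{L^q(\Omega)}\leqslant\|v_0\|_{L^\infty(\Omega)}\,\|u_\varepsilon(s)\|_{L^q(\Omega)}$, and Lemma \ref{lemma-3.9x}, specifically \eqref{-3.36}, furnishes a uniform-in-$\varepsilon$ bound $\|u_\varepsilon(s)\|_{L^q(\Omega)}\leqslant C(q,T)$ for every such $q$. Fixing $q=3$ makes the temporal exponent satisfy $\frac12+\frac1q=\frac56<1$, so the singular kernel $(t-s)^{-5/6}$ is integrable near $s=t$, and with the help of the factor $e^{-\lambda(t-s)}$ the entire time integral is bounded by a constant depending only on $T$ and the data, uniformly in $\varepsilon$. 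Combining this gradient bound with \eqref{-2.9} then establishes \eqref{-4.1}.

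The main obstacle, and the reason the preceding sections were required, is precisely guaranteeing that $u_\varepsilon v_\varepsilon$ belongs to $L^q(\Omega)$ for some $q$ strictly above the spatial dimension $2$: this super-dimensional integrability is exactly the threshold that renders the semigroup kernel integrable in time and hence produces an $L^\infty$ gradient bound rather than a weaker one. That integrability of $u_\varepsilon$ is the content of \eqref{-3.36}, which itself rests on the $L^\infty$-in-time control of $\int_\Omega|\nabla v_\varepsilon|^4/v_\varepsilon^3$ and the pointwise lower bound for $v_\varepsilon$ obtained in Lemmas \ref{lemma-3.8} and \ref{lem-14.3}; once these are in hand, the present argument is routine.
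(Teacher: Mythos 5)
Your proposal is correct and follows essentially the same route as the paper: a Duhamel representation of $v_{\varepsilon}$, the Neumann heat semigroup gradient smoothing estimate with forcing $u_{\varepsilon}v_{\varepsilon}$ controlled in $L^{q}(\Omega)$ via \eqref{-2.9} and \eqref{-3.36}, and integrability of the singular kernel in time; the only cosmetic difference is that the paper works with the shifted semigroup $e^{t(\Delta-1)}$ and forcing $u_{\varepsilon}v_{\varepsilon}-v_{\varepsilon}$ rather than $e^{t\Delta}$ and $u_{\varepsilon}v_{\varepsilon}$. Your explicit choice $q=3>2$ to make $(t-s)^{-\frac12-\frac1q}$ integrable is in fact slightly more careful than the paper's phrasing ``fixing any $p>1$'', which tacitly requires $p>2$ as well.
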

\begin{proof}
According to the Neumann heat semigroup \cite{2010-JDE-Winkler}, fixing any $p > 1$ we can find $c_1>0$ such that 
\begin{align}\label{-4.2}
&\left\|\nabla v_{\varepsilon}(t)\right\|_{L^{\infty}(\Omega)} \nonumber\\
= &\left\|\nabla e^{t(\Delta-1)} v_0-\int_0^t \nabla e^{(t-s)(\Delta-1)}\left\{u_{\varepsilon}(s) v_{\varepsilon}(s)-v_{\varepsilon}(s)\right\} \d s\right\|_{L^{\infty}(\Omega)} \nonumber\\
\leqslant &  c_1\left\|v_0\right\|_{W^{1, \infty}(\Omega)}\nonumber\\
& +  c_1 \int_0^t\left(1+(t-s)^{-\frac{1}{2}-\frac{1}{p}}\right) e^{-(t-s)}\left\|u_{\varepsilon}(s) v_{\varepsilon}(s)-v_{\varepsilon}(s)\right\|_{L^p(\Omega)} \d s
\end{align}
for all $t \in\left(0, T_{\max, \varepsilon}\right)$ and $\varepsilon \in(0,1)$. In view of \eqref{-2.9} and \eqref{-3.36}, we can find $c_2=c_2(T)>0$ such that
\begin{align*}
\left\|u_{\varepsilon}(s) v_{\varepsilon}(s)-v_{\varepsilon}(s)\right\|_{L^p(\Omega)} 
\leqslant & \left\|u_{\varepsilon}(s)\right\|_{L^p(\Omega)}\left\|v_{\varepsilon}(s)\right\|_{L^{\infty}(\Omega)}+|\Omega|^{\frac{1}{p}}\left\|v_{\varepsilon}(s)\right\|_{L^{\infty}(\Omega)} \\
\leqslant & c_2\left\|v_0\right\|_{L^{\infty}(\Omega)}+|\Omega|^{\frac{1}{p}}\left\|v_0\right\|_{L^{\infty}(\Omega)}.
\end{align*}
This together with \eqref{-4.2} and \eqref{-2.9} yields \eqref{-4.1}.
\end{proof}

We can now proceed to assert the boundedness of $u_{\varepsilon}$.
\begin{lem}\label{lemma-4.4}
Let $l\geqslant1$ and $T = \min \{\widetilde{T} , T_{\max,\varepsilon}\}$ for $\widetilde{T} \in \left(0, +\infty\right)$, and assume that \eqref{assIniVal} holds. Then there exists $C(T)>0$ such that
\begin{align}\label{619-1648}
\left\|u_{\varepsilon}(t)\right\|_{L^{\infty}(\Omega)} \leqslant C(T)\quad \text { for all } t \in(0, T) \text { and } \varepsilon \in(0,1), 
\end{align}
where $C(T)$ is a positive constant independent of $\varepsilon$.
\end{lem}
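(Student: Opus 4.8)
The plan is to derive the uniform bound \eqref{619-1648} by a Moser--Alikakos iteration, using the differential inequality of Lemma \ref{lemma-3.9xx} as the engine and now tracking carefully how its constants depend on the integrability exponent $p$. The inputs are the pointwise controls on $v_\varepsilon$, namely $\|v_\varepsilon\|_{L^\infty(\Omega)}\le\|v_0\|_{L^\infty(\Omega)}$ from \eqref{-2.9}, $\|\nabla v_\varepsilon\|_{L^\infty(\Omega)}\le C(T)$ from \eqref{-4.1}, and $\|1/v_\varepsilon\|_{L^\infty(\Omega)}\le C(T)$ from \eqref{3.6-1}. Because $v_\varepsilon$ is thereby trapped between two positive constants and has bounded gradient, I would first rewrite the right-hand side of \eqref{0704-0024} purely in terms of $\phi_{\varepsilon,p}:=u_\varepsilon^{(l+p-1)/2}$: the mixed dissipation $\int_\Omega|\nabla(u_\varepsilon^{(l+p-1)/2}v_\varepsilon^{1/2})|^2$ is comparable to $\int_\Omega|\nabla\phi_{\varepsilon,p}|^2$ up to lower-order terms, and $\int_\Omega u_\varepsilon^{2(l+p-1)}v_\varepsilon^2\le\|v_0\|_{L^\infty(\Omega)}^2\int_\Omega\phi_{\varepsilon,p}^4$.

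I would then interpolate via the two-dimensional Gagliardo--Nirenberg inequality in the form $\|\phi_{\varepsilon,p}\|_{L^4(\Omega)}\le C\|\nabla\phi_{\varepsilon,p}\|_{L^2(\Omega)}^{3/4}\|\phi_{\varepsilon,p}\|_{L^1(\Omega)}^{1/4}+C\|\phi_{\varepsilon,p}\|_{L^1(\Omega)}$, so that after squaring and a Young's inequality the top-order gradient is absorbed into the dissipation (whose coefficient $\tfrac{p(p-1)}{(l+p-1)^2}$ stays bounded below by a positive constant for $p\ge2$), while the remainder is of the form $C_p\|\phi_{\varepsilon,p}\|_{L^1(\Omega)}^2=C_p(\int_\Omega u_\varepsilon^{(l+p-1)/2})^2$ with $C_p$ merely polynomial in $p$. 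To pass from a time integral to the supremum in time, I would retain the logistic absorption $-p\int_\Omega u_\varepsilon^{p+1}$ from \eqref{0704-0024}, which via $\int_\Omega u_\varepsilon^{p}\le\eta\int_\Omega u_\varepsilon^{p+1}+C_\eta|\Omega|$ supplies a coercive term $+\int_\Omega u_\varepsilon^p$ on the left. Writing $y_p(t):=\int_\Omega u_\varepsilon^p$, this yields
\begin{align*}
y_p'(t)+y_p(t)\le C_p\Big(\int_\Omega u_\varepsilon^{(l+p-1)/2}\Big)^2+C_p,
\end{align*}
whence by an elementary comparison $\sup_{t\in(0,T)}y_p\le\max\big\{\int_\Omega u_{0\varepsilon}^{p},\ \sup_{t\in(0,T)}[\,C_p(\int_\Omega u_\varepsilon^{(l+p-1)/2})^2+C_p\,]\big\}$.

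Finally, choosing the exponents $p_k$ recursively through $p_{k-1}=\tfrac{l+p_k-1}{2}$, so that $p_k-(l-1)=2^k(p_0-(l-1))$ grows geometrically and the lower-order norm at level $k$ is exactly $\int_\Omega u_\varepsilon^{p_{k-1}}$, the estimate above turns into a recursion $M_k\le C^{\,k}p_k^{\beta}M_{k-1}^{2}$ for $M_k:=\sup_{\varepsilon\in(0,1)}\sup_{t\in(0,T)}\int_\Omega u_\varepsilon^{p_k}$, in which the initial data enter only through $\int_\Omega u_{0\varepsilon}^{p_k}\le(\|u_0\|_{L^\infty(\Omega)}+1)^{p_k}|\Omega|$. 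Taking $p_k$-th roots and letting $k\to\infty$ gives $\|u_\varepsilon(t)\|_{L^\infty(\Omega)}=\lim_k M_k^{1/p_k}\le C(T)$ uniformly in $\varepsilon$, which is \eqref{619-1648}. I expect the main obstacle to be the bookkeeping of the $p$-dependence of the Gagliardo--Nirenberg and Young constants: one must verify that the geometric prefactor $C^kp_k^{\beta}$ together with the quadratic exponent in the recursion still produce a convergent $\limsup_k M_k^{1/p_k}$, and that every auxiliary constant remains independent of $\varepsilon$. The double degeneracy causes no difficulty here precisely because the iteration is run for the weighted quantity $u_\varepsilon^{(l+p-1)/2}v_\varepsilon^{1/2}$, the weight $v_\varepsilon$ being uniformly comparable to $1$ by \eqref{-2.9} and \eqref{3.6-1}.
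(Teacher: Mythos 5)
Your proposal is correct, but it executes the final step differently from the paper. The paper's own proof never returns to the differential inequality \eqref{0704-0024}: it uses \eqref{3.6-1} to obtain the pointwise lower bound $u_{\varepsilon}^{l-1}v_{\varepsilon}\geqslant c_1(T)\,u_{\varepsilon}^{l-1}$ for the diffusion coefficient, uses \eqref{-2.9}, \eqref{-3.36} and \eqref{-4.1} to bound $u_{\varepsilon}$, $u_{\varepsilon}^2$ and the flux $u_{\varepsilon}^l v_{\varepsilon}\nabla v_{\varepsilon}$ in $L^q(\Omega)$ uniformly in $t$ and $\varepsilon$ for every $q>1$, and then invokes the Moser-type iteration lemma of Tao and Winkler \cite{2012-JDE-TaoWinkler} as a black box (that lemma tolerates degenerate diffusion bounded below by a multiple of $u^{l-1}$, which is exactly what the positive lower bound on $v_{\varepsilon}$ supplies). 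You instead run the Alikakos--Moser iteration by hand on the paper's own testing inequality \eqref{0704-0024}, with the structurally pleasant choice $p_{k-1}=\tfrac{l+p_k-1}{2}$ that makes the low-order Gagliardo--Nirenberg norm at level $k$ exactly the quantity controlled at level $k-1$. This trades the external citation for explicit bookkeeping of the $p$-dependence of $A$ and of the Young constants, which, as you note, is harmless: $A$ grows only polynomially in $p$, and the dissipation coefficient $\tfrac{p(p-1)}{(l+p-1)^2}$ is increasing in $p$ and hence bounded below by $\tfrac{2}{(l+1)^2}$ for all $p\geqslant 2$. Both arguments consume the same a priori information (\eqref{-2.9}, \eqref{3.6-1}, \eqref{-4.1} and the $L^p$ theory of Section \ref{sect-3}); yours is longer but self-contained, the paper's is shorter but delegated.

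Two points should be made explicit in a written-up version. First, replacing the mixed dissipation $\int_{\Omega}\bigl|\nabla\bigl(u_{\varepsilon}^{\frac{l+p-1}{2}}v_{\varepsilon}^{\frac12}\bigr)\bigr|^2$ by $\int_{\Omega}|\nabla\phi_{\varepsilon,p}|^2$ via $(a+b)^2\geqslant\tfrac12 a^2-b^2$, \eqref{3.6-1} and \eqref{-4.1} produces the superlinear remainder $C(T)\int_{\Omega}\phi_{\varepsilon,p}^2$, which must itself be absorbed by the same Gagliardo--Nirenberg/Young step used for the main term; since its coefficient is independent of $p$, this costs nothing, but it is not literally a ``lower-order'' term and should be treated. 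Second, the recursion needs a finite starting value $M_0$ at some $p_0\geqslant\max\{2,l\}$ (so that $p_k-(l-1)=2^k\bigl(p_0-(l-1)\bigr)\nearrow\infty$ and \eqref{0704-0024} applies at every level); this starting bound is not generated by the recursion itself but is exactly \eqref{-3.36} of Lemma \ref{lemma-3.9x}, which you should cite.
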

\begin{proof}
We apply \eqref{3.6-1} and \eqref{-4.1} to find $c_1(T) > 0$ and $c_2(T) > 0$ such that
\begin{align*}
\left(u_{\varepsilon}^{l-1} v_{\varepsilon}\right)(x, t) \geqslant c_1(T) u_{\varepsilon}^{l-1}(x, t) \quad \text { for all } x \in \Omega, t \in(0, T) \text {, and } \varepsilon \in(0,1)
\end{align*}
and
\begin{align}\label{0929-1946}
\left\|\nabla v_{\varepsilon}(\cdot, t)\right\|_{L^{\infty}(\Omega)} \leqslant c_2(T) \quad \text { for all } t \in(0, T) \text { and } \varepsilon \in(0,1).
\end{align}
Since
\begin{align*}
u_{\varepsilon t}=\nabla \cdot\left(u^{l-1}_{\varepsilon} v_{\varepsilon} \nabla u_{\varepsilon}\right)
  - \nabla \cdot\left(u^{l}_{\varepsilon} v_{\varepsilon} \nabla v_{\varepsilon}\right)+ u_{\varepsilon} - u_{\varepsilon}^2, \quad  x \in \Omega, ~~~t>0,
\end{align*}
applying \eqref{-2.9} and \eqref{-3.36} as well as \eqref{0929-1946} implies that for each $q>1$, there exists $c_3(q, T)>0$ such that
\begin{align*}
\sup _{\varepsilon \in(0,1)} \sup _{t \in(0, T)}\left\{\left\|u_{\varepsilon}( t)\right\|_{L^q(\Omega)}+ \left\|u^2_{\varepsilon}( t)\right\|_{L^q(\Omega)}+\left\|\left(u_{\varepsilon}^l v_{\varepsilon} \nabla v_{\varepsilon}\right)(t)\right\|_{L^q(\Omega)}\right\} \leqslant c_3(q, T)
\end{align*}
and finally, a Moser iteration result (cf. \cite{2012-JDE-TaoWinkler}) can imply \eqref{619-1648}.
\end{proof}

According to Lemma \ref{lemma-4.4}, we can demonstrate the global existence of $\left(u_{\varepsilon}, v_{\varepsilon}\right)$ for any $\varepsilon \in(0,1)$.
\begin{lem}\label{lemma-4.5}
Let $l\geqslant1$ and assume that \eqref{assIniVal} holds. Then $T_{\max, \varepsilon}=+\infty$ for all $\varepsilon \in(0,1)$.
\end{lem}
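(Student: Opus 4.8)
The plan is to argue by contradiction using the extensibility criterion \eqref{-2.7} together with the uniform-in-time $L^\infty$ bound for $u_{\varepsilon}$ already established in Lemma \ref{lemma-4.4}. All of the substantive analytic work has been carried out in the preceding lemmas; what remains is simply to observe that finite-time boundedness of $u_{\varepsilon}$ is incompatible with the blow-up alternative.

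First I would fix an arbitrary $\varepsilon \in (0,1)$ and suppose, for contradiction, that $T_{\max,\varepsilon} < \infty$. I would then select a finite $\widetilde{T} \in (0,+\infty)$ with $\widetilde{T} \geqslant T_{\max,\varepsilon}$, so that $T = \min\{\widetilde{T}, T_{\max,\varepsilon}\} = T_{\max,\varepsilon}$. For this choice the hypotheses of Lemma \ref{lemma-4.4} are met, and hence \eqref{619-1648} furnishes a constant $C(T) = C(\widetilde{T}) < \infty$, depending only on $\widetilde{T}$ and the initial data but neither on $\varepsilon$ nor on $t$, such that
\begin{align*}
\left\|u_{\varepsilon}(t)\right\|_{L^{\infty}(\Omega)} \leqslant C(\widetilde{T}) \quad \text{ for all } t \in (0, T_{\max,\varepsilon}).
\end{align*}

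Passing to the upper limit as $t \nearrow T_{\max,\varepsilon}$ then gives $\limsup_{t \nearrow T_{\max,\varepsilon}} \left\|u_{\varepsilon}(t)\right\|_{L^{\infty}(\Omega)} \leqslant C(\widetilde{T}) < \infty$, which directly contradicts the dichotomy \eqref{-2.7} provided by Lemma \ref{lemma-2.1}. Consequently the assumption $T_{\max,\varepsilon} < \infty$ is untenable, and I conclude $T_{\max,\varepsilon} = +\infty$. Since $\varepsilon \in (0,1)$ was arbitrary, the claim follows for all $\varepsilon \in (0,1)$.

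As for difficulty, I expect no genuine obstacle at this concluding step: the real content sits upstream, in the $\varepsilon$-independent $L^p$ estimates of Section \ref{sect-3} and the Moser-type iteration feeding Lemma \ref{lemma-4.4}. The only point meriting a moment's care is the bookkeeping of the auxiliary parameter $\widetilde{T}$: because every estimate in the paper is stated on the truncated interval $(0,T)$ with $T = \min\{\widetilde{T}, T_{\max,\varepsilon}\}$, one must take $\widetilde{T}$ finite yet no smaller than $T_{\max,\varepsilon}$, so that the resulting bound indeed covers the entire maximal existence interval while the constant $C(\widetilde{T})$ stays finite.
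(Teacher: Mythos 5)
Your proof is correct and is exactly the argument the paper intends: the paper's own proof is the one-line observation that Lemma \ref{lemma-4.4} combined with the extensibility criterion \eqref{-2.7} forces $T_{\max,\varepsilon}=+\infty$, and your contradiction argument (including the careful choice of $\widetilde{T}\geqslant T_{\max,\varepsilon}$ so that $T=\min\{\widetilde{T},T_{\max,\varepsilon}\}=T_{\max,\varepsilon}$) just spells out the details left implicit there.
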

\begin{proof}
This immediately follows from Lemma \ref{lemma-4.4} when combined with \eqref{-2.7}.
\end{proof}

Since the boundedness of $u_{\varepsilon}$ and $v_{\varepsilon}$ asserted in Lemmas  \ref{lem-14.3}, \ref{lemma-4.1} and \ref{lemma-4.4}, the H\"{o}lder estimates of $u_{\varepsilon}$, $v_{\varepsilon}$ and $\nabla v_{\varepsilon}$ can be derived from standard parabolic regularity theory.
\begin{lem}\label{lemma-4.8}
Let $l\geqslant1$ and $T>0$, and assume that \eqref{assIniVal} holds. Then one can find $\theta_1=\theta(T) \in(0,1)$ such that 
\begin{align}\label{-4.12}
\left\|u_{\varepsilon}\right\|_{C^{\theta_1, \frac{\theta_1}{2}}(\overline{\Omega} \times[0, T])} \leqslant C_1(T) \quad \text { for all } \varepsilon \in(0,1)
\end{align}
and
\begin{align}\label{-4.13}
\left\|v_{\varepsilon}\right\|_{C^{\theta_1, \frac{\theta_1}{2}}(\overline{\Omega} \times[0, T])} \leqslant C_1(T) \quad \text { for all } \varepsilon \in(0,1).
\end{align}
where $C_1(T)$ is a positive constant independent of $\varepsilon$.
Moreover, for each $\tau>0$ and all $T>\tau$ one can also fix $\theta_2=\theta_2(\tau, T) \in(0,1)$ such that 
\begin{align}\label{-4.14}
\left\|v_{\varepsilon}\right\|_{C^{2+\theta_2, 1+\frac{\theta_2}{2}}(\overline{\Omega} \times [\tau, T])} \leqslant C_2(\tau, T) \quad \text { for all } \varepsilon \in(0,1)
\end{align}
where $C_2(T)>0$ is a positive constant independent of $\varepsilon$.
\end{lem}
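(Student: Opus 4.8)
The plan is to treat the two components separately, exploiting that all quantities entering the principal parts of \eqref{sys-regul} have by now been bounded uniformly in $\varepsilon$ on $\overline{\Omega}\times[0,T]$: by Lemma~\ref{lemma-4.4} the component $u_\varepsilon$ is bounded above, by \eqref{-2.9} together with Lemma~\ref{lem-14.3} we have $0<c\le v_\varepsilon\le\|v_0\|_{L^\infty(\Omega)}$, and by Lemma~\ref{lemma-4.1} the gradient $\nabla v_\varepsilon$ is bounded, each estimate being independent of $\varepsilon$.

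For the $u_\varepsilon$-estimate \eqref{-4.12} I would rewrite the first equation of \eqref{sys-regul} in the divergence form
\begin{align*}
u_{\varepsilon t}=\nabla\cdot\vec{A}_\varepsilon(x,t,\nabla u_\varepsilon)+B_\varepsilon(x,t),\quad \vec{A}_\varepsilon=u_\varepsilon^{l-1}v_\varepsilon\nabla u_\varepsilon-u_\varepsilon^{l}v_\varepsilon\nabla v_\varepsilon,\quad B_\varepsilon=u_\varepsilon-u_\varepsilon^2,
\end{align*}
and verify that it meets the structure conditions of the H\"older theory for porous-medium-type degenerate parabolic equations (e.g.\ \cite{1993-JDE-PorzioVespri}). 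Concretely, Young's inequality on the cross term gives the coercivity and growth bounds
\begin{align*}
\vec{A}_\varepsilon\cdot\nabla u_\varepsilon\ge\tfrac12 u_\varepsilon^{l-1}v_\varepsilon|\nabla u_\varepsilon|^2-\tfrac12 u_\varepsilon^{l+1}v_\varepsilon|\nabla v_\varepsilon|^2,\qquad |\vec{A}_\varepsilon|\le u_\varepsilon^{l-1}v_\varepsilon|\nabla u_\varepsilon|+u_\varepsilon^{l}v_\varepsilon|\nabla v_\varepsilon|,
\end{align*}
where, thanks to the three uniform bounds above, the weight $u_\varepsilon^{l-1}v_\varepsilon$ is comparable to the admissible degenerate weight $u_\varepsilon^{l-1}$ and all inhomogeneities ($u_\varepsilon^{l+1}v_\varepsilon|\nabla v_\varepsilon|^2$, $u_\varepsilon^{l}v_\varepsilon|\nabla v_\varepsilon|$, and $B_\varepsilon$) are bounded uniformly in $\varepsilon$. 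The cited H\"older regularity result then produces an exponent $\theta_1\in(0,1)$ and a bound as in \eqref{-4.12} whose constants depend only on the $\varepsilon$-independent data; since $u_{0\varepsilon}=u_0+\varepsilon\in W^{1,\infty}(\Omega)\hookrightarrow C^{0,1}(\overline{\Omega})$ is H\"older continuous uniformly in $\varepsilon$, the estimate extends up to $t=0$.

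Next, \eqref{-4.13} follows from the linear theory for the second equation $v_{\varepsilon t}=\Delta v_\varepsilon-u_\varepsilon v_\varepsilon$ under homogeneous Neumann conditions: its right-hand side $-u_\varepsilon v_\varepsilon$ is bounded in $L^\infty(\Omega\times(0,T))$ uniformly by Lemma~\ref{lemma-4.4} and \eqref{-2.9}, while $v_0\in W^{1,\infty}(\Omega)\hookrightarrow C^{0,1}(\overline{\Omega})$ provides H\"older-continuous initial data, so standard parabolic regularity yields a uniform $C^{\theta_1,\theta_1/2}$-bound, after shrinking $\theta_1$ to the smaller of the two exponents (which I may do without loss of generality). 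Finally, for \eqref{-4.14} I would upgrade the forcing term: once \eqref{-4.12} and \eqref{-4.13} hold, the product $-u_\varepsilon v_\varepsilon$ lies in $C^{\theta_1,\theta_1/2}(\overline{\Omega}\times[0,T])$ uniformly in $\varepsilon$, whence parabolic Schauder estimates on $\overline{\Omega}\times[\tau,T]$ deliver \eqref{-4.14} for some $\theta_2=\theta_2(\tau,T)$; the restriction to $t\ge\tau>0$ compensates for the mere Lipschitz (rather than $C^{2+\theta}$) regularity of $v_0$.

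I expect the principal difficulty to lie entirely in \eqref{-4.12}: the first equation is genuinely degenerate where $u_\varepsilon$ is small (when $l>1$), so one cannot invoke uniformly parabolic Schauder or De Giorgi--Nash--Moser theory directly, and must instead confirm that the problem falls within the scope of the degenerate H\"older framework and that every structural constant is governed by the $\varepsilon$-uniform bounds of Lemmas~\ref{lemma-4.1}, \ref{lem-14.3}, and \ref{lemma-4.4}. The estimates for $v_\varepsilon$, by contrast, are comparatively routine consequences of linear parabolic regularity.
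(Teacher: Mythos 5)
Your proposal is correct and follows essentially the same route as the paper: the paper likewise obtains \eqref{-4.12} and \eqref{-4.13} by combining the $\varepsilon$-uniform bounds of Lemmas \ref{lem-14.3}, \ref{lemma-4.1}, \ref{lemma-4.4} with the degenerate H\"older theory of \cite{1993-JDE-PorzioVespri}, and then derives \eqref{-4.14} from Schauder estimates \cite{1968-Ladyzen} together with a cut-off in time. Your explicit verification of the structure conditions (coercivity and growth of $\vec{A}_\varepsilon$ with uniformly bounded inhomogeneities, using the positive lower bound on $v_\varepsilon$) simply fills in the details the paper leaves to the cited reference.
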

\begin{proof}
It follows from Lemmas \ref{lem-14.3}, \ref{lemma-4.1}, \ref{lemma-4.4} and \eqref{assIniVal} as well as the H\"{o}lder regularity theory for parabolic equations (cf. \cite{1993-JDE-PorzioVespri}) that \eqref{-4.12} and \eqref{-4.13} hold. Combining the standard Schauder estimates (cf. \cite{1968-Ladyzen}) with \eqref{-4.12} and a cut-off argument, we can deduce \eqref{-4.14}.
\end{proof}

In view of the preparations above, we can now utilize a standard extraction procedure to construct a pair of limit functions $(u, v)$, which is proved to be a global bounded weak solution to the system \eqref{SYS:MAIN} as documented in Theorem \ref{thm-1.1}.

\begin{lem}\label{lemma-4.9}
Let $l\geqslant1$ and assume that the initial value $\left(u_0, v_0\right)$ satisfies \eqref{assIniVal}. Then there exist $(\varepsilon_j)_{j \in \mathbb{N}} \subset(0,1)$ as well as functions $u$ and $v$ which satisfy \eqref{solu:property} with $u > 0$ a.e in $\Omega \times(0, \infty)$ and $v>0$ in $\overline{\Omega} \times[0, \infty)$, such that
\begin{flalign}
& u_{\varepsilon} \rightarrow u  \quad \text { in } C_{\mathrm{loc}}^0(\overline{\Omega} \times(0, \infty)), \label{-4.15}\\
& v_{\varepsilon} \rightarrow v  \quad \text { in } C_{\mathrm{loc}}^0(\overline{\Omega} \times[0, \infty)) \text { and in } C_{\mathrm{loc}}^{2,1}(\overline{\Omega} \times(0, \infty)),\label{-4.16}\\
& \nabla v_{\varepsilon} \stackrel{*}{\rightharpoonup} \nabla v \quad \text { in } L^{\infty}(\Omega \times(0, \infty)),\label{-4.17}
\end{flalign}
as $\varepsilon=\varepsilon_j \searrow 0$, and that $(u, v)$ is a global weak solution of (\ref{SYS:MAIN}) according to Definition \ref{def-weak-sol}. 
\end{lem}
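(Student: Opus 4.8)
The plan is to obtain $(u,v)$ as limits of the regularized solutions $(u_\varepsilon,v_\varepsilon)$ through an Arzel\`a--Ascoli/diagonal extraction, and then to pass to the limit in the weak formulation obtained by testing \eqref{sys-regul} with an arbitrary $\varphi\in C_0^\infty(\overline\Omega\times[0,\infty))$.

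First I would assemble the compactness. Lemma \ref{lemma-4.8} furnishes $\varepsilon$-uniform H\"older bounds for $u_\varepsilon$ and $v_\varepsilon$ on every $\overline\Omega\times[0,T]$, together with uniform $C^{2+\theta_2,1+\theta_2/2}$ bounds for $v_\varepsilon$ on $\overline\Omega\times[\tau,T]$. Applying Arzel\`a--Ascoli on an exhausting family of such compact sets and extracting a diagonal subsequence $\varepsilon=\varepsilon_j\searrow0$, I get $u_\varepsilon\to u$ in $C^0_{\mathrm{loc}}(\overline\Omega\times[0,\infty))$ and $v_\varepsilon\to v$ both in $C^0_{\mathrm{loc}}(\overline\Omega\times[0,\infty))$ and in $C^{2,1}_{\mathrm{loc}}(\overline\Omega\times(0,\infty))$, which yields \eqref{-4.15} and \eqref{-4.16}; the weak-$*$ convergence \eqref{-4.17} then follows from the uniform $W^{1,\infty}$ bound of Lemma \ref{lemma-4.1} via sequential Banach--Alaoglu, the limit being identified as $\nabla v$ through \eqref{-4.16}. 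Positivity of $v$ is inherited from the $\varepsilon$-independent bound $1/v_\varepsilon\le C(T)$ of Lemma \ref{lem-14.3} (i.e. \eqref{3.6-1}), so $v\ge1/C(T)>0$; while $u\ge0$ is immediate from $u_\varepsilon>0$, and the a.e.\ strict positivity can be read off from the $\varepsilon$-uniform control of the entropy-type functionals underlying Lemma \ref{lemma-3.6}. For the regularity required in Definition \ref{def-weak-sol}, note $u\in C^0$ and $v\in C^0\cap W^{1,\infty}$ dispatch \eqref{-2.1}, while Lemma \ref{lemma-3.9x} applied with $p=l+1$ gives $\int_0^T\!\int_\Omega u_\varepsilon^{2(l-1)}|\nabla u_\varepsilon|^2\le C(T)$, so that $\nabla u_\varepsilon^l=l\,u_\varepsilon^{l-1}\nabla u_\varepsilon$ is bounded in $L^2(\Omega\times(0,T))$; combined with the $L^\infty$ bounds this secures $u^l\in L^1_{\mathrm{loc}}([0,\infty);W^{1,1}(\Omega))$ and $u^l\nabla v\in L^1_{\mathrm{loc}}$, which is \eqref{-2.2}.

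The core of the proof is the passage to the limit in the tested identities. For the $v$-equation all terms pass by dominated convergence, since $u_\varepsilon v_\varepsilon\to uv$ and $\nabla v_\varepsilon\to\nabla v$ on the compact time-support of $\varphi$, using \eqref{-4.16} on $(0,\infty)$ and the uniform $L^\infty$-gradient bound near $t=0$, thereby establishing \eqref{-2.4}. For the $u$-equation the two flux terms are delicate. Writing the diffusion flux as $\tfrac1l v_\varepsilon\nabla u_\varepsilon^l$, I use that $u_\varepsilon^l$ is bounded in $L^2(0,T;H^1(\Omega))$ and converges a.e.\ to $u^l$, whence $\nabla u_\varepsilon^l\rightharpoonup\nabla u^l$ weakly in $L^2$; paired with the strong convergence $v_\varepsilon\nabla\varphi\to v\nabla\varphi$ in $L^2$, the product converges and reproduces $-\tfrac1l\int_0^\infty\!\int_\Omega v\,\nabla u^l\cdot\nabla\varphi$. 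For the taxis flux, $u_\varepsilon^l v_\varepsilon\nabla\varphi\to u^l v\nabla\varphi$ strongly in $L^1$ while $\nabla v_\varepsilon\stackrel{*}{\rightharpoonup}\nabla v$ in $L^\infty$, and the elementary fact that a strongly $L^1$-convergent factor against a weak-$*$ $L^\infty$-convergent factor converges yields $\int_0^\infty\!\int_\Omega u^l v\,\nabla v\cdot\nabla\varphi$. The reaction terms and the time-derivative/initial-data terms pass trivially using the $L^\infty$ bounds, the local uniform convergence, and $u_{0\varepsilon}=u_0+\varepsilon\to u_0$, which completes \eqref{-2.3}.

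The main obstacle I anticipate is precisely the diffusion flux: it is the only term where a genuinely weak limit of a nonlinear quantity involving $\nabla u_\varepsilon$ must be taken, so one must first secure the $\varepsilon$-uniform $L^2$ bound on $\nabla u_\varepsilon^l$ (via the choice $p=l+1$ in Lemma \ref{lemma-3.9x}) and then correctly pair the weakly convergent $\nabla u_\varepsilon^l$ against the strongly convergent $v_\varepsilon\nabla\varphi$. Every remaining term benefits from the strong local-uniform convergence of $u_\varepsilon$ and $v_\varepsilon$ supplied by Lemma \ref{lemma-4.8}, so their compactness is automatic.
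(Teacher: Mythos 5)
Your proposal is correct and follows essentially the same route as the paper: diagonal extraction via Arzel\`a--Ascoli from the uniform H\"older bounds of Lemma \ref{lemma-4.8} (plus Lemma \ref{lemma-4.1} for \eqref{-4.17}), positivity of $v$ from \eqref{3.6-1}, the key $\varepsilon$-uniform $L^2$ bound on $\nabla u_\varepsilon^l$ obtained from Lemma \ref{lemma-3.9x} with $p=l+1$ (the paper's \eqref{-4.23}), and then term-by-term limit passage in the tested identities, pairing the weakly convergent $\nabla u_\varepsilon^l$ with the strongly convergent $v_\varepsilon\nabla\varphi$ and the weak-$*$ convergent $\nabla v_\varepsilon$ with the strongly $L^1$-convergent $u_\varepsilon^l v_\varepsilon\nabla\varphi$. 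Your write-up is in fact somewhat more explicit than the paper's about how the two flux terms converge, and your handling of the remaining terms matches the paper's argument.
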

\begin{proof}
The existence of $\left\{\varepsilon_j\right\}_{j \in \mathbb{N}}$ and nonnegative functions $u$ and $v$ with the properties in \eqref{solu:property} and \eqref{-4.15}-\eqref{-4.17} follows from Lemmas \ref{lemma-4.1} and \ref{lemma-4.8} by a diagonal extraction procedure. Moreover, $u \geqslant 0$ by nonnegativity of all the $u_{\varepsilon}$. Based on \eqref{3.6-1} and \eqref{-4.16}, we obtain $v>0$ in $\overline \Omega \times[0, \infty)$.

We use \eqref{-3.37} with $p=l+1$ to deduce that
\begin{align}\label{-4.23}
\left(u_{\varepsilon}^l\right)_{\varepsilon \in(0,1)} \text { is bounded in } L^2\left((0, T) ; W^{1, 2}(\Omega)\right) \quad \text { for all } T>0. 
\end{align}
The regularity requirements \eqref{-2.1} and \eqref{-2.2} in Definition \ref{def-weak-sol} become straightforward consequences of \eqref{-4.15}, \eqref{-4.16} and \eqref{-4.23}. Given $\varphi \in C_0^{\infty}(\overline{\Omega} \times[0, \infty))$ satisfying  $\frac{\partial \varphi}{\partial \nu}=0$ on $\partial \Omega \times(0, \infty)$, using \eqref{sys-regul} we see that
\begin{align}\label{ident-1.10-1}
-\int_0^{\infty} \int_{\Omega} u_{\varepsilon} \varphi_t-\int_{\Omega} u_0 \varphi(0)=&-\frac{1}{2} \int_0^{\infty} \int_{\Omega} v_{\varepsilon}  \nabla u_{\varepsilon}^{2} \cdot \nabla \varphi
+ \int_0^{\infty} \int_{\Omega} u_{\varepsilon}^{2} v_{\varepsilon} \nabla v_{\varepsilon} \cdot \nabla \varphi \nonumber\\
& +\int_0^{\infty} \int_{\Omega} u_{\varepsilon} v_{\varepsilon} \varphi
\end{align}
for all $\varepsilon \in(0,1)$. We apply \eqref{-4.15} to show that
$$
-\int_0^{\infty} \int_{\Omega} u_{\varepsilon} \varphi_t \rightarrow-\int_0^{\infty} \int_{\Omega} u \varphi_t
$$
as $\varepsilon=\varepsilon_j \searrow 0$. Based on \eqref{-4.15}, \eqref{-4.16}, \eqref{-4.17} and \eqref{-4.23}, we have
$$
-\frac{1}{l} \int_0^{\infty} \int_{\Omega} v_{\varepsilon}  \nabla u_{\varepsilon}^{l} \cdot \nabla \varphi \rightarrow -\frac{1}{l} \int_0^{\infty} \int_{\Omega} v \nabla u^{l} \cdot \nabla \varphi
$$
and
$$
\int_0^{\infty} \int_{\Omega} u_{\varepsilon}^{l} v_{\varepsilon} \nabla v_{\varepsilon} \cdot \nabla \varphi \rightarrow  \int_0^{\infty} \int_{\Omega} u^{l} v\nabla v \cdot \nabla \varphi
$$
and
$$
\int_0^{\infty} \int_{\Omega} u_{\varepsilon} \varphi \rightarrow \int_0^{\infty} \int_{\Omega} u \varphi
$$
as well as
$$
\int_0^{\infty} \int_{\Omega} u^2_{\varepsilon} \varphi \rightarrow \int_0^{\infty} \int_{\Omega} u^2 \varphi
$$
as $\varepsilon=\varepsilon_j \searrow 0$. Therefore, \eqref{ident-1.10-1} implies \eqref{-2.3}. Similarly, \eqref{-2.4} can be verified.
\end{proof}


\section{Appendix: A key functional inequalities}

In this Appendix, we derive a functional inequality which play key roles in our analysis.  


\begin{lem}\label{lemma-3.5}
Let $\Omega \subset \mathbb{R}^2$ be a bounded domain with smooth boundary and $p \geqslant 1$. For each $\eta>0$ and any $\varphi, \psi \in C^1(\overline{\Omega})$ satisfying $\varphi,\psi>0$ in $\overline{\Omega}$, there holds
\begin{align}\label{eq-6.4}
\int_\Omega \varphi^{p+1} \psi |\nabla \psi|^2
\leqslant & \eta \int_\Omega \varphi^{p-1} \psi|\nabla \varphi|^2
  + c \left\{\left\|\psi \right\|^2_{L^{\infty}(\Omega)}+\frac{\left\|\psi \right\|^4_{L^{\infty}(\Omega)}}{\eta}\right\} 
    \cdot \int_{\Omega} \varphi^{p+1}   \cdot \int_\Omega \frac{|\nabla \psi|^4}{\psi^3} \nonumber\\
& + c \left\|\psi \right\|^2_{L^{\infty}(\Omega)} \cdot\left\{\int_\Omega \varphi\right\}^{2 p+1} 
    \cdot \int_\Omega \frac{|\nabla \psi|^4}{\psi^3} 
    + c \left\|\psi \right\|^2_{L^{\infty}(\Omega)} \cdot \int_{\Omega}\varphi \psi
\end{align}
for some constant $c=c(p)>0$.
\end{lem}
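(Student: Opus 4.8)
The plan is to reduce the estimate to a planar Gagliardo--Nirenberg inequality applied to the weighted quantity $W:=\varphi^{\frac{p+1}{2}}\psi^{\frac12}$, which is singled out by the fact that the leading part of $|\nabla W|^2$ reproduces exactly the target gradient $\int_\Omega\varphi^{p-1}\psi|\nabla\varphi|^2$. Throughout I write $I:=\int_\Omega\frac{|\nabla\psi|^4}{\psi^3}$ for the factor that is to appear on the right.

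First I would isolate $I^{1/2}$ by Cauchy--Schwarz. Splitting the integrand as $\varphi^{p+1}\psi|\nabla\psi|^2=\bigl(\varphi^{p+1}\psi^{5/2}\bigr)\cdot\bigl(\psi^{-3/2}|\nabla\psi|^2\bigr)$ yields
\[ \int_\Omega\varphi^{p+1}\psi|\nabla\psi|^2\le\Bigl(\int_\Omega\varphi^{2(p+1)}\psi^5\Bigr)^{1/2}I^{1/2}\le\|\psi\|_{L^\infty(\Omega)}^{3/2}\,\|W\|_{L^4(\Omega)}^2\,I^{1/2}, \]
where I used $\psi^5\le\|\psi\|_{L^\infty}^3\,\psi^2$ together with $\|W\|_{L^4}^4=\int_\Omega\varphi^{2(p+1)}\psi^2$. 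This reduces matters to controlling $\|W\|_{L^4}^2$.

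Next I would invoke a two-dimensional Gagliardo--Nirenberg inequality interpolating $\|W\|_{L^4}$ between $\|\nabla W\|_{L^2}$ and a lower Lebesgue norm of $W$, and expand $\nabla W=\tfrac{p+1}{2}\varphi^{\frac{p-1}{2}}\psi^{\frac12}\nabla\varphi+\tfrac12\varphi^{\frac{p+1}{2}}\psi^{-\frac12}\nabla\psi$, so that $\|\nabla W\|_{L^2}^2$ equals, up to a cross term, $\tfrac{(p+1)^2}{4}\int_\Omega\varphi^{p-1}\psi|\nabla\varphi|^2$ plus $\tfrac14\int_\Omega\varphi^{p+1}\psi^{-1}|\nabla\psi|^2$. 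Interpolating against $\|W\|_{L^2}^2=\int_\Omega\varphi^{p+1}\psi\le\|\psi\|_{L^\infty}\int_\Omega\varphi^{p+1}$ seeds the term $\int_\Omega\varphi^{p+1}\cdot I$, while interpolating down to the $L^{2/(p+1)}$ norm of $W$, which is governed by $\int_\Omega\varphi$, produces the companion term carrying $\bigl(\int_\Omega\varphi\bigr)^{2p+1}$. A sequence of Young inequalities then finishes the bookkeeping: the product $\|\psi\|_{L^\infty}^{3/2}I^{1/2}\|\nabla W\|_{L^2}\|W\|_{L^2}$ is split so as to retain $\eta\|\nabla W\|_{L^2}^2$, which delivers both the stated term $\eta\int_\Omega\varphi^{p-1}\psi|\nabla\varphi|^2$ and, through the conjugate factor, the $1/\eta$ in front of $\|\psi\|_{L^\infty}^4\int_\Omega\varphi^{p+1}I$; the remaining factors of $I^{1/2}$ and of the lower norm are distributed into the two $I$-weighted terms, the lowest-order residue being $\|\psi\|_{L^\infty}^2\int_\Omega\varphi\psi$.

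The main obstacle is the weight, since only $\|\psi\|_{L^\infty}$ and $I$ are at our disposal and no lower bound on $\psi$ is assumed. The choice $W=\varphi^{(p+1)/2}\psi^{1/2}$ is \emph{forced} by the demand that $\|\nabla W\|_{L^2}^2$ reproduce the weighted gradient $\int_\Omega\varphi^{p-1}\psi|\nabla\varphi|^2$, and it inevitably generates the term $\int_\Omega\varphi^{p+1}\psi^{-1}|\nabla\psi|^2$ carrying a \emph{negative} power of $\psi$. I expect the delicate point to be folding this term back through the same Cauchy--Schwarz device, $\int_\Omega\varphi^{p+1}\psi^{-1}|\nabla\psi|^2\le\bigl(\int_\Omega\varphi^{2(p+1)}\psi\bigr)^{1/2}I^{1/2}$, so that its $I^{1/2}$ recombines with the $I^{1/2}$ peeled off at the start to yield exactly one power of $I$, and in arranging the $\|\psi\|_{L^\infty}$-power accounting—repeatedly trading powers of $\psi$ against $\|\psi\|_{L^\infty}$ and using $\int_\Omega|\nabla\psi|^4\le\|\psi\|_{L^\infty}^3 I$—so that the coefficients emerge precisely as $\|\psi\|_{L^\infty}^2$ and $\|\psi\|_{L^\infty}^4/\eta$ rather than with the extra half-powers the crude estimates first produce.
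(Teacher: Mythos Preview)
Your opening Cauchy--Schwarz step, peeling off $I^{1/2}=\bigl(\int_\Omega|\nabla\psi|^4/\psi^3\bigr)^{1/2}$, matches the paper exactly. The gap is in what follows: you work with $W=\varphi^{(p+1)/2}\psi^{1/2}$ and the $L^2$-gradient $\|\nabla W\|_{L^2}$, but the term $B:=\int_\Omega\varphi^{p+1}\psi^{-1}|\nabla\psi|^2$ that this choice forces cannot be closed. After your Young split the right-hand side carries $c_p\eta\,\|\nabla W\|_{L^2}^2$ and hence $c_p\eta B$. Take $\varphi$ constant: then the only $\eta$-growing term in the lemma's right-hand side, namely $\eta\int_\Omega\varphi^{p-1}\psi|\nabla\varphi|^2$, vanishes, while $c_p\eta B\to\infty$ as $\eta\to\infty$ whenever $\psi$ is nonconstant. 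So no bound on $B$ by the stated right-hand side can exist. Your proposed fix $B\le\bigl(\int_\Omega\varphi^{2(p+1)}\psi\bigr)^{1/2}I^{1/2}$ only relocates the problem, since $\int_\Omega\varphi^{2(p+1)}\psi=\int_\Omega W^4\psi^{-1}$ again carries a negative power of $\psi$; and there is no leftover $I^{1/2}$ to ``recombine'' with, that factor having been absorbed by the very Young step that produced $\eta\|\nabla W\|_{L^2}^2$.

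The paper avoids negative powers altogether by applying the two-dimensional embedding $\|\rho\|_{L^2}\le C\|\nabla\rho\|_{L^1}+C\|\rho\|_{L^{1/(p+1)}}$ to $\rho=\varphi^{p+1}\psi^{5/2}$ instead. In $\nabla\rho=(p+1)\varphi^p\psi^{5/2}\nabla\varphi+\tfrac{5}{2}\varphi^{p+1}\psi^{3/2}\nabla\psi$ both coefficients carry positive $\psi$-exponents, and the $L^1$-norm splits by the triangle inequality. The $|\nabla\psi|$-piece $\int_\Omega\varphi^{p+1}\psi^{3/2}|\nabla\psi|$ is then bounded via H\"older by $\|\psi\|_{L^\infty}\bigl(\int_\Omega\varphi^{p+1}\bigr)^{1/2}$ times the square root of the left-hand side itself, and is absorbed. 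Your claim that the weight in $W$ is ``forced'' holds only under the self-imposed constraint that $|\nabla W|^2$ reproduce the integrand $\varphi^{p-1}\psi|\nabla\varphi|^2$ pointwise; the $L^1$-gradient route lifts that constraint because H\"older then distributes the $\psi$-powers \emph{after} differentiation rather than before.
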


\begin{proof}
Recall the Sobolev imbedding inequality in $\Omega \subset \mathbb{R}^2$,
\begin{align}\label{eq-6.2}
\int_{\Omega} \rho^2 \leqslant C\|\nabla \rho\|_{L^1(\Omega)}^2 + C\|\rho\|_{L^{1}(\Omega)}^2, \quad 
  \rho \in W^{1,1}(\Omega).
\end{align}
Applying \eqref{eq-6.2}, we easily obtain
\begin{align}\label{eq-6.5}
\|\rho\|_{L^2(\Omega)} 
\leqslant C\|\nabla \rho\|_{L^1(\Omega)}+C\|\rho\|_{L^{\frac{1}{p+1}}(\Omega)}, \quad \rho \in W^{1,1}(\Omega)
\end{align}
for some $C=C(p)>0$. 
We use H\"{o}lder's inequality to yield
\begin{align*}
\int_\Omega \varphi^{p+1} \psi |\nabla \psi|^2 \leqslant\left\{\int_\Omega \frac{|\nabla \psi|^4}{\psi^3}\right\}^{\frac{1}{2}} \cdot\left\{\int_\Omega \varphi^{2 (p+1)} \psi^{5}\right\}^{\frac{1}{2}}.
\end{align*}
For any $\varphi, \psi \in C^1(\overline{\Omega})$ satisfying $\varphi,\psi>0$ in $\overline{\Omega}$, we apply \eqref{eq-6.5} with $\rho=\varphi^{p+1} \psi^{\frac{5}{2}}$ to infer that
\begin{align*}
\left\{\int_\Omega \varphi^{2 (p+1)} \psi^{5}\right\}^{\frac{1}{2}} 
= &\ \left\|\varphi^{p+1} \psi^{\frac{5}{2}}\right\|_{L^2(\Omega)} \nonumber\\
\leqslant &\  C \int_\Omega\left|(p+1) \varphi^{p} \psi^{\frac{5}{2}} \nabla \varphi
    +\frac{5}{2} \varphi^{p+1}\psi^\frac{3}{2}\nabla \psi\right|
    +C\left\{\int_\Omega \varphi \psi^{\frac{5}{2 (p+1)}}\right\}^{p+1} \nonumber\\
= &\ (p+1) C \int_\Omega \varphi^{p} \psi^\frac{5}{2}|\nabla \varphi|
     +\frac{5 C}{2} \int_\Omega \varphi^{p+1}\psi^\frac{3}{2}|\nabla \psi| \nonumber\\
  &\ + C \left\{\int_\Omega \varphi \psi^{\frac{5}{2 (p+1)}}\right\}^{p+1}.
\end{align*}
Applying H\"{o}lder's inequality, we show that
\begin{align*}
\int_G \varphi^{p} \psi^\frac{5}{2}|\nabla \varphi| 
\leqslant \left\|\psi\right\|_{L^{\infty}(\Omega)}^{2} \cdot\left\{\int_\Omega \varphi^{p+1}\right\}^{\frac{1}{2}} \cdot\left\{\int_\Omega \varphi^{p-1} \psi|\nabla \varphi|^2\right\}^{\frac{1}{2}}
\end{align*}
and
\begin{align*}
\int_\Omega \varphi^{p+1}\psi^\frac{3}{2}|\nabla \psi| 
\leqslant \left\|\psi\right\|_{L^{\infty}(\Omega)} \cdot\left\{\int_\Omega \varphi^{p+1}\right\}^{\frac{1}{2}} \cdot\left\{\int_\Omega \varphi^{p+1} \psi|\nabla \psi|^2\right\}^{\frac{1}{2}}
\end{align*}
as well as
\begin{align*}
\left\{\int_\Omega \varphi \psi^{\frac{5}{2 (p+1)}}\right\}^{p+1} 
& \leqslant \left\|\psi \right\|_{L^{\infty}(\Omega)}^2 \cdot\left\{\int_\Omega\varphi^{\frac{2 p+1}{2 (p+1)}}\cdot(\varphi \psi)^{\frac{1}{2 (p+1)}}\right\}^{p+1} \\
& \leqslant  \left\|\psi \right\|_{L^{\infty}(\Omega)}^2 \cdot\left\{\int_\Omega \varphi\right\}^{\frac{2 p+1}{2}} \cdot\left\{\int_\Omega \varphi \psi\right\}^{\frac{1}{2}}.
\end{align*}
Therefore, for all $\eta>0$, it follows that
\begin{align*}
\int_\Omega \varphi^{p+1} \psi |\nabla \psi|^2 
\leqslant & (p+1) C\left\|\psi\right\|_{L^{\infty}(\Omega)}^{2} \cdot\left\{\int_\Omega \frac{|\nabla 
    \psi|^4}{\psi^3}\right\}^{\frac{1}{2}} 
    \cdot\left\{\int_\Omega \varphi^{p+1}\right\}^{\frac{1}{2}} 
    \cdot\left\{\int_\Omega \varphi^{p-1} \psi|\nabla \varphi|^2\right\}^{\frac{1}{2}}\\
& +\frac{5 C\left\|\psi\right\|_{L^{\infty}(\Omega)}}{2} 
    \cdot \left\{\int_\Omega \frac{|\nabla \psi|^4}{\psi^3}\right\}^{\frac{1}{2}}
    \cdot\left\{\int_\Omega \varphi^{p+1}\right\}^{\frac{1}{2}} 
    \cdot\left\{\int_\Omega \varphi^{p+1} \psi|\nabla \psi|^2\right\}^{\frac{1}{2}} \\
& + C\left\|\psi \right\|_{L^{\infty}(\Omega)}^2 \cdot\left\{\int_\Omega \frac{|\nabla \psi|^4}{\psi^3}\right\}^{\frac{1}{2}} 
    \cdot\left\{\int_\Omega \varphi\right\}^{\frac{2 p+1}{2}} \cdot\left\{\int_\Omega \varphi \psi\right\}^{\frac{1}{2}} \\
\leqslant &   \frac{\eta}{2} \int_\Omega \varphi^{p-1} \psi|\nabla \varphi|^2
  +\frac{(p+1)^2 C^2\left\|\psi\right\|_{L^{\infty}(\Omega)}^4}{2 \eta} \cdot \int_\Omega \varphi^{p+1} 
    \cdot \int_\Omega \frac{|\nabla \psi|^4}{\psi^3} \\
& + \frac{1}{2} \int_\Omega \varphi^{p+1} \psi |\nabla \psi|^2
  + \frac{25 C^2\left\|\psi\right\|^2_{L^{\infty}(\Omega)}}{8} 
  \cdot \int_\Omega \varphi^{p+1}  \cdot \int_\Omega \frac{|\nabla \psi|^4}{\psi^3}\\
& +C\left\|\psi \right\|_{L^{\infty}(\Omega)}^2 \cdot\left\{\int_\Omega \varphi\right\}^{2 p+1} 
  \cdot \int_\Omega \frac{|\nabla \psi|^4}{\psi^3}
  +C\left\|\psi \right\|_{L^{\infty}(\Omega)}^2 \int_\Omega \varphi \psi
\end{align*}
which implies \eqref{eq-6.4} if we set $c=c(p)=\max \left\{2C, (p+1)^2 C^2, \frac{25 C^2}{4}\right\}$.
\end{proof}




\end{document}